\numberwithin{figure}{section} 
\newcommand{\abs}[1]{\left\vert#1\right\vert}
\DeclareMathOperator{\Z}{Z}
\DeclareMathOperator{\pt}{pt}
\DeclareMathOperator{\bd}{bd}
\DeclareMathOperator{\ppt}{ppt}
\DeclareMathOperator{\rd}{rd}
\DeclareMathOperator{\comp}{comp}
\DeclareMathOperator{\thr}{th}
\DeclareMathOperator{\PIP}{PIP}
\DeclareMathOperator{\term}{Term}
\DeclareMathOperator{\rev}{Rev}
\DeclareMathOperator{\ct}{ct}
\DeclareMathOperator{\act}{act}
\newcommand{\ptp}{\pt_+}
\newcommand{\Zp}{\Z_+}
\newcommand{\F}{\mathcal{F}}
\newtheorem{theorem}{Theorem}[section]
\newtheorem{lemma}[theorem]{Lemma}
\newtheorem{corollary}[theorem]{Corollary}
\newtheorem{proposition}[theorem]{Proposition}
\newtheorem{observation}[theorem]{Observation}
\theoremstyle{definition}
\newtheorem{definition}[theorem]{Definition}
\newtheorem*{definition*}{Definition}
\newtheorem{example}[theorem]{Example}
\newtheorem{remark}[theorem]{Remark}
\newcommand{\cartProd}{\mathbin{\Box}}
\newcommand{\bit}{\begin{itemize}}
\newcommand{\eit}{\end{itemize}}
\newcommand{\ben}{\begin{enumerate}}
\newcommand{\een}{\end{enumerate}}
\newcommand{\beq}{\begin{equation}}
\newcommand{\eeq}{\end{equation}}
\newcommand{\bea}{\begin{eqnarray*}}
\newcommand{\eea}{\end{eqnarray*}}
\newcommand{\bpf}{\begin{proof}}
\newcommand{\epf}{\end{proof}\ms}
\newcommand{\bmt}{\begin{bmatrix}}
\newcommand{\emt}{\end{bmatrix}}
\newcommand{\ms}{\medskip}
\newcommand{\lc}{\left\lceil}
\newcommand{\rc}{\right\rceil}
\tikzset{vertex/.style={black,fill,draw,minimum size=6pt,inner sep=0pt,circle,thin},bold/.style={black,line width=0.6mm},plain/.style={black,thin},bold edges/.style=bold,plain edges/.style=plain,label distance=1mm,text node/.style={rectangle,fill=none,draw=none},every label/.style=text node,caption node/.style={text node,font=\Large}}
\numberwithin{equation}{section}
\title[New structures for zero forcing and propagation time]{New structures and their applications\\ to variants of zero forcing and propagation time}
\author[L. Hogben]{Leslie Hogben}
\address{Department of Mathematics, Iowa State University, Ames, IA 50011 and American Institute of Mathematics, San Jose, CA 95112}
\email{hogben@aimath.org}
\author[M. Hunnell]{Mark Hunnell}
\address{Department of Mathematics, Winston-Salem State University, Winston-Salem, NC 27110}
\email{hunnellm@wssu.edu}
\author[K. Liu]{Kevin Liu}
\address{Department of Mathematics, University of Washington, Seattle, WA 98195}
\email{kliu15@uw.edu}
\author[H. Schuerger]{Houston Schuerger}
\address{Department of Mathematics, Trinity College, Hartford, CT 06106}
\email{houston.schuerger@trincoll.edu}
\author[B. Small]{Ben Small}
\address{University Place, WA 98466}
\email{bentsm@gmail.com}
\author[Y. Zhang]{Yaqi Zhang} 
\address{Department of Mathematics, Drexel University, Philadelphia, PA 19104}
\email{yaqizhangus@outlook.com}
\begin{document}
\maketitle

\begin{abstract} We introduce a generalization of the concept of a chronological list of forces, called a relaxed chronology.  This concept is used to introduce a new way of formulating the standard zero forcing process, which we refer to as parallel increasing path covers, or PIPs. The combinatorial properties of PIPs are utilized to identify bounds comparing standard zero forcing propagation time to positive semidefinite propagation time.  A collection of paths within a set of PSD forcing trees, called a path bundle, is used to identify the PSD forcing analog of the reversal of a standard zero forcing process, as well as to draw a connection between PSD forcing and rigid-linkage forcing.
\end{abstract}\bigskip

\section{Introduction}\label{s:intro}

Zero forcing is a dynamic coloring process on (finite, simple, and undirected) graphs.  It has multiple applications and has been introduced independently in multiple fields. Examples of these applications include bounding maximum nullity and minimum rank of graphs in combinatorial linear algebra \cite{AIM}, efficient placement of monitors in an electrical grid through power domination in \cite{powerdom}  (with the role of zero forcing evident in \cite{BH}), and the study of control of quantum systems in \cite{graphinfect}.  

In zero forcing and its variants, one starts with a graph whose vertices have all been colored either blue or white, and then an iterative process occurs during which white vertices become blue according to some color change rule.  For most variants, the goal is to identify initial colorings for which the entire graph will eventually become blue after a sufficient number of applications of the associated color change rule. The minimum number of time-steps needed to color the entire graph blue (while forcing all possible vertices at each time-step) is known as the propagation time, and was introduced formally as a graph parameter in \cite{itind} and \cite{proptime}.  The minimum of the sum of the cardinality of a zero forcing set and its propagation time in a graph $G$ is known as the throttling number and was first studied in \cite{thr}. In addition to standard zero forcing, zero forcing variants included in this article are positive semidefinite (PSD) zero forcing introduced in \cite{param}, power domination introduced in \cite{powerdom}, and rigid-linkage forcing introduced in \cite{rigid}.  We will also be interested in the propagation times of PSD forcing introduced in \cite{warnberg} and power domination introduced in \cite{powprop}, as well as the PSD throttling number introduced in \cite{psdthr}. Note that throughout the literature, it is sometimes convenient to consider a chronological list of forces, where exactly one force occurs at a time-step rather than all possible forces.

In this paper, we introduce a generalization of a chronological list of forces, which we call a relaxed chronology. This can be viewed as choosing any subset of possible forces to apply at each time-step in the zero forcing process. For standard zero forcing, this allows us to construct an alternative formulation of the zero forcing process, which we call parallel increasing path covers (or PIPs). PIPs consist of a path cover of a graph and  certain collections of partitions of $\{0,1,2,\dots,K\}$ for some $K\in \mathbb{N}$ (where we use the convention $0\in \mathbb N$), which we call block partitions. These respectively correspond to the forcing chains and active time-steps of vertices with respect to a relaxed chronology. By starting with only a collection of block partitions, PIPs also allow us to construct graphs with predetermined chain sets.  
This formulation has some resemblance to the collections of graphs introduced by Carlson in \cite{throt} to study throttling numbers, but with an emphasis on the neighborhoods of single vertices to allow for the study of local properties.  

Though PIPs correspond to relaxed chronologies in standard zero forcing, they  also have applications to other variants of zero forcing, as well as structural graph properties. Using the specific properties of the block partitions, one can show that the set of active vertices at some time-step in $\{0,1,2,\dots,K\}$ forms a set that is both a PSD forcing set and a power dominating set, as well as a vertex cut when the resulting set of vertices does not consist only of endpoints of the path cover (see Figure \ref{fig:intro} for an example). From these results, we establish upper bounds for PSD propagation time and power domination time in terms of standard propagation time. We also recover well-known results on the terminus of a zero forcing set and  generalizations of reversals  to relaxed chronologies of forces.

\begin{figure}[h!]
    \centering
    \begin{tikzpicture}[scale=1.6,every node/.style=vertex]
\draw[bold edges]
  (0,2) node["$\{0\}$" left]  (v10) {} --
++(1,0) node["$\{1{,}2{,}3\}$" above] (v11) {} --
++(1,0) node["$\{4{,}5\}$" above] (v12) {} --
++(1,0) node["$\{6{,}7{,}8\}$" right] (v13) {};
\draw[bold edges]
  (0,1) node["$\{0{,}1{,}2\}$" left]  (v20) {} --
++(1,0) node["$\{3{,}4\}$" below] (v21) {} --
++(1,0) node["$\{5{,}6{,}7\}$" below] (v22) {} --
++(1,0) node["$\{8\}$" right] (v23) {};
\draw[bold edges]
  (0,0) node["$\{0{,}1{,}2{,}3\}$" left]  (v30) {} --
++(1,0) node["$\{4{,}5\}$" below] (v31) {} --
++(1,0) node["$\{6\}$" below] (v32) {} --
++(1,0) node["$\{7{,}8\}$" right] (v33) {};
\draw[plain edges]
  (v10) -- (v20) -- (v30)
  (v11) -- (v21) -- (v31)
  (v12) -- (v22) -- (v32)
  (v13) -- (v23) -- (v33)
  (v20) -- (v11)
  (v30) -- (v21)
  (v22) -- (v13)
  (v22) -- (v33)
  (v31) -- (v12);
\end{tikzpicture}
    \caption{A graph with path cover shown in bold and vertices labeled by elements in certain block partitions of $\{0,1,2,\ldots,8\}$. Observe that picking the set of vertices whose label contains a fixed $j\in \{1,2,\ldots,7\}$ produces a set that is simultaneously a PSD forcing set, a power dominating set, and a vertex cut of the graph.}
    \label{fig:intro}
\end{figure}
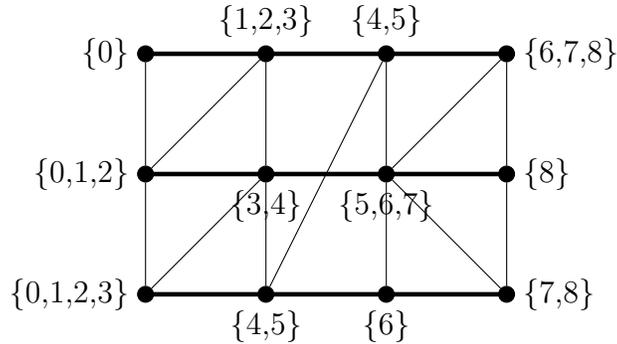

Relaxed chronologies for certain variants of zero forcing also allow for a natural way to restrict forcing to 
subgraphs. We introduce a special case called path bundles, where the restriction of PSD forcing is standard zero forcing. We then use path bundles to construct a lower bound on PSD propagation time using standard zero forcing propagation time.  
We also use path bundles to establish a PSD analog of the reversal of standard zero forcing processes,  and we show that given a set of PSD forcing trees and a fixed vertex $x$, one can find a PSD forcing set of the same cardinality containing $x$ that preserves the PSD forcing trees. Our results on path bundles also have connections to the rigid linkages studied in \cite{rigid}.

We start in Section \ref{s:prelim} with preliminaries. In Section \ref{s:relaxed-chron-PIP}, we define relaxed chronologies and PIPs, and we establish a correspondence between PIPs and relaxed chronologies for standard zero forcing. In Section \ref{s:PIP-apps}, we apply our results on PIPs to establish results for several variants of zero forcing, such as those illustrated in Figure \ref{fig:intro}. Finally, in Section \ref{s:relaxed-chron-subgraphs}, we consider restrictions of relaxed chronologies to subgraphs and establish results on path bundles. Our results on power domination and rigid linkages are included in appendices. 


\section{Preliminaries}\label{s:prelim}

In this section we provide definitions for graphs, zero forcing and variants, and propagation.  Additional background can be found in \cite[Part 4]{HLS22}.

\subsection{Graph terminology}
A {\em graph} $G$ is a pair $(V(G),E(G))$, where $V(G)$ is the set of {\em vertices} and $E(G)$ is a set of 2-element sets of vertices called {\em edges}.  To help differentiate between subsets of $V(G)$ of cardinality two and edges, given two distinct vertices $u,v\in V(G)$, the subset of $V(G)$ composed of these two vertices will be denoted $\{u,v\}$ while the edge between $u$ and $v$ will be denoted $uv$ (or $vu$).  All graphs are assumed to be finite and {\em simple}, that is, to have neither loops (edges between between a vertex and itself) nor multiple edges between any two distinct vertices.  

If $G$ and $H$ are graphs such that $V(H) \subseteq V(G)$ and $E(H) \subseteq E(G)$, then $H$ is a {\em subgraph} of $G$.  If $H$ is a subgraph of $G$ and for any  two vertices $u,v \in V(H)$ we have $uv \in E(H)$ if and only if $uv \in E(G)$, then $H$ is an {\em induced subgraph} of $G$.  Given a subset of vertices $S \subseteq V(G)$, the induced subgraph $H$ of $G$ with vertex set $V(H)=S$ will be denoted $G[S]$.  In addition, for a given set of vertices $B \subseteq V(G)$, the notation $G-B$ will be used to denote $G[V(G) \setminus B]$.  

Given a vertex $v \in V(G)$, the {\em open neighborhood} of $v$ in $G$, denoted by $N_G(v)$, is the collection of vertices $u$ such that $vu \in E(G)$.  The {\em closed neighborhood} of a vertex $v$, denoted by $N_G[v]$, is $N_G[v]=N_G(v) \cup \{v\}$.  Likewise, given a set of vertices $S \subseteq V(G)$, the open (respectively, closed) neighborhood of $S$ is defined to be the union of the open  (respectively, closed) neighborhoods of the vertices of $S$.   

A \emph{path} is a sequence of distinct vertices $v_1,v_2,\dots,v_m$ such that for each $i$ with $1 \leq i \leq m-1$ we have $v_iv_{i+1} \in E(G)$.  
Given a pair of vertices $u,v \in V(G)$, a {\em $uv$-path} is a path $v_1,v_2,\dots,v_m$ such that $u=v_1$, $v=v_m$.  A graph $G$ is said to be \emph{connected} if for each pair of vertices $u,v \in V(G)$ there exists a $uv$-path.  A graph is {\em disconnected} if it is not connected.   The maximal connected subgraphs of $G$ are its {\em components}.  The set of these components is denoted $\comp(G)$, and given a vertex $v$, the component of $G$ that contains the vertex $v$ is denoted $\comp(G,v)$.  If $S \subset V(G)$ such that $G-S$ has more components than $G$ does, then $S$ is a \emph{vertex cut} of $G$.
We also view a path as a graph: the \emph{path} $P_n$ is the graph with $V(P_n)=\{v_1,\dots,v_n\}$ and $E(P_n)=\{{ v_iv_{i+1}}: i=1,\dots,n-1\}$.    A \emph{path cover} of $G$ is a collection of induced paths in $G$ with the property that every vertex of $G$ is in exactly one path.

\subsection{Zero forcing}
In standard zero forcing and each of the variants discussed here, one starts with a graph and colors every vertex of the graph either blue or white.  Given a set  of blue vertices (with the other vertices colored white), a process is started during which blue vertices cause white vertices to become blue.  Each variant is defined by its color change rule, which governs under what circumstances a blue vertex can cause a white vertex to become blue during this process. When a blue vertex $u$ causes a white vertex $v$ to become blue, this is referred to as \emph{forcing} and denoted $u \rightarrow v$ (there is often a choice as to which vertex is chosen to force $v$ among those that can).   It is worth noting that once a vertex is blue, it will remain blue.   In most of the variants discussed in this paper, the goal is for every vertex in the graph to become blue, so the color change rule will be applied until no further forces are possible.  The relevant color change rules are as follows.  
\begin{itemize}
    \item Standard zero forcing color change rule (CCR-$\Z$): If a blue vertex $u$ has a unique white neighbor $v$, then $u$ can force $v$ to become blue.
    \item Positive semidefinite (PSD) zero forcing color change rule (CCR-$\Zp$): If $B$ is the set of currently blue vertices, $C$ is a component of $G-B$, and $u$ is a blue vertex such that $N_G(u) \cap V(C)=\{v\}$, then $u$ can force $v$ to become blue.
\end{itemize}
 Power domination on graphs was introduced in \cite{powerdom} before zero forcing was defined as a separate parameter. Following the introduction of zero forcing in \cite{AIM} and \cite{graphinfect} and the work of Brueni and Heath simplifying power domination in \cite{BH}, power domination can be viewed as the process of starting with a set of initially blue vertices $B$, coloring $ N_G[B]$ blue during the first step, and for subsequent steps (that is, steps $k\geq 2$) applying CCR-$\Z$. We will also be discussing rigid-linkage forcing (RL-forcing) in Appendix \ref{appendix:linkage}, but we defer the definition of its color change rule to that section.

A {\em standard zero forcing set} of a graph $G$ is a set of vertices $B$ such that if $B$ is the set of initially blue vertices and CCR-$\Z$ is applied a sufficient number of times, then all vertices of $G$ become blue.  The {\em standard zero forcing number} of a graph $G$ is $\Z(G)=\min\{\abs{B}: B \text{ is a standard zero forcing set of }G\}$. A {\em standard minimum zero forcing set} $B$ of a graph $G$ is a standard zero forcing set of $G$ such that $\abs{B}=\Z(G)$. 
A {\em (minimum) PSD forcing set} and the {\em PSD forcing number $\Zp(G)$} of a graph are defined analogously using CCR-$\Zp$ as the color change rule. 
The terms  {\em (minimum) power dominating set} and {\em power domination number} are defined analogously by applying CCR-$\Z$ to $N_G[B]$ where $B$ is the set of initially blue vertices.  

\subsection{Forcing and propagation}

The process of coloring vertices blue has been viewed from various perspectives, including performing   only one force at a time or as many forces as  are independently  possible in each step.  In a zero 
forcing process applied to a zero
forcing set $B$  where exactly one white vertex is forced  blue during each step, a list of these forces in the order in which they occur is known as a {\em chronological list of forces}. Note that a chronological list of forces of $B$ contains $K=|V(G)|-|B|$ forces.  Let $F^{(k)}$ be the set containing the one force $u \rightarrow v$ that occurs during step $k$ of the process.  Then this chronological list of forces can also be viewed as an ordered set and denoted by $\mathcal F=\{F^{(k)}\}_{k=1}^K$.   There are also times when considering a set of forces without reference to a specific order may be useful and in such instances we will denote the set of forces by $F$.
Given a set of initially blue vertices, a great deal of choice may occur in creating a chronological list of forces that colors all vertices blue, because at each step of the zero forcing process there may be multiple vertices capable of being forced and multiple vertices capable of forcing each such vertex.  However, it is well-known that the order of forces does not affect what vertices can be colored blue by a given set of initially blue vertices \cite{AIM}.

In a \emph{propagation process}, at each step the color change rule is applied independently to each vertex  that is blue when the step begins and then the set of blue vertices is updated to include the set of vertices that have been forced blue; this is referred to as a \emph{time-step}.   More formally, for any fixed color change rule, for any graph $G$, and for any initial set $B$ of blue vertices, let $B^{[0]}=B^{(0)}=B$.  For $k\ge 1$, define $B^{(k)}$ to be the set of vertices that can be forced blue during time-step $k$ of propagation, i.e., $B^{(k)}=\{w: 
\mbox{$w$ can be forced by some $v$, given the set of blue vertices is $B^{[k-1]}$}\}.$
Define $B^{[k]}$ to be the set of vertices that are blue after time-step $k$, i.e., $B^{[k]}=B^{[k-1]}\cup B^{(k)}$.  
Suppose $B$ is a zero 
forcing set.  Define the \emph{round function} by $\rd(v)=k$ where $k$ is the unique index such that $v\in B^{(k)}$.  Consider the propagation process for $B$, i.e., the sequence of blue sets $B^{(k)}$. Let $t$ be the least $k$ such that $B^{[k]}=V(G)$.  For $k= 1,\dots,t$ and for each vertex $u$ in $B^{(k)}$, choose a vertex $v_u$ such that $v_u\in B^{[k-1]}$ and $v_u$ can force $u$ (in accordance with the color change rule being used). Let $F^{(k)}=\{v_u\to u:\rd(u)=k\}$.  The ordered set $\F$ of sets $F^{(k)}$ is a  \emph{propagating family of forces}  and $F=\bigcup_{k=1}^t F^{(k)}$ is a \emph{propagating set of forces}. For a given propagating set of forces, a vertex $v$ is \emph{active} at time-step $k$ if $v$ is blue after time-step $k$ but $v$ has not yet performed a force.  Note that in a propagation process, the sets $B^{(k)}$ and $B^{[k]}$ are uniquely determined by $B$. 
Furthermore, each of the sets $B^{(k)}$ and $F^{(k)}$ is nonempty for $k=1,\dots,t$.

Consider the standard zero forcing color change rule.  For a set of forces $F$ of  a zero 
forcing set $B$ of $G$, each vertex $v\in B$ defines a  {\em forcing chain} $C_{v}=(v=v_0,v_1,\dots,v_k)$ where $v_{i-1}\to v_i\in F$ for $i=1,\dots k$ and $v_k$ does not perform a force. Every vertex of $G$ appears in exactly one forcing chain defined from $F$ (note $k=0$ is allowed with the forcing chain $(v_0)$).  The {\em chain set} defined by $F$ is the collection of forcing chains $\mathcal C=\{C_v\}_{v \in B}$.  The subgraph of $G$ induced by the vertices of a forcing chain is a path.  For a set of forces {$F$} of $B$ using the PSD forcing color change rule, similar sets of vertices are constructed by $F$, but in this case each vertex might force multiple vertices, so rather than forming induced paths the process constructs induced trees.  For this reason, a set of forces $F$ using the PSD forcing color change rule (or a set of PSD forces) constructs forcing trees: For a vertex $b$ in a PSD forcing set $B$ and a set of PSD forces $\F$ of $B$, define $V_b$ to be the set of all vertices  $w$ such that  there is a sequence of forces  $b=v_1\to v_2\to\dots\to v_k=w$ in $\F$  (the empty sequence of forces is permitted, i.e., $b\in V_b$).
The \emph{forcing tree $T_b$}  is the induced subgraph $T_b=G[V_b]$.
The \emph{forcing tree cover}  (for a set of forces $\F$) is $\mathcal{T}=\{T_b : b\in B\}$ and every vertex of $G$ is a vertex of some tree in  the forcing tree cover.  Note that each vertex in a PSD forcing set is the first vertex of a forcing tree, so the number of forcing trees is the cardinality of the PSD forcing set \cite{ekstrand}. 

While we will be considering several color change rules, our focus on propagation time will be restricted to standard zero forcing and PSD forcing.  
Let $B$ be a standard zero forcing set  of a graph $G$.   The {\em standard propagation time of $B$} is $\pt(G,B)=t$ where $t$ is the least $k$ such that $B^{[k]}=V(G)$.  The {\em standard $k$-propagation time} of $G$ is
\[\pt(G,k)=\min\{\pt(G,B): B\text{ is a standard zero forcing set of } G \text{ and }\abs{B}=k\}.\]
Finally, the {\em standard propagation time} of a graph $G$ is
\[\pt(G)=\min\{\pt(G,B): B\text{ is a standard zero forcing set of } G \text{ and }\abs{B}=\Z(G)\}.\]
A set of vertices $B \subseteq V(G)$ is said to be an {\em efficient standard zero forcing set} if $B$ is a standard zero forcing set such that $\abs{B}=\Z(G)$ and $\pt(G,B)=\pt(G)$.  Similarly $B$ is said to be a {\em $k$-efficient standard zero forcing set} if $B$ is a standard zero forcing set such that $\abs{B}=k$ and $\pt(G,B)=\pt(G,k)$.  The {\em standard throttling number} of a graph $G$ is
\[\thr(G)=\min\{\abs{B}+\pt(G,B): B \text{ is a standard zero forcing set of }G\}.\]
For a  PSD forcing set $B$ of a graph $G$, the {\em PSD propagation time of $B$}, denoted by  $\pt_+(G,B)$, the {\em PSD  $k$-propagation time} of $G$, denoted by   $\pt_+(G,k)$, the {\em PSD propagation time} of $G$, denoted by $\pt_+(G)$, and the {\em PSD throttling time}, denoted by $\thr_+(G)$, are defined analogously; and for a power dominating set $B$ of a graph $G$, the {\em power propagation time of $B$}, denoted by  $\ppt(G,B)$, the {\em power $k$-propagation time} of $G$, denoted by   $\ppt(G,k)$, and the {\em power propagation time} of $G$, denoted by $\ppt(G)$, are defined analogously.


\section{Relaxed chronologies and parallel increasing path covers}\label{s:relaxed-chron-PIP}

 In this section we define two related new structures: relaxed chronologies, which generalize both chronological lists of forces and propagating families of forces,  and parallel increasing path covers, which model chain sets and carry additional information.

\subsection{Relaxed chronologies}

We start by presenting a flexible framework for discussing zero forcing, called a relaxed chronology.  To help introduce and motivate this framework, we offer the next example.

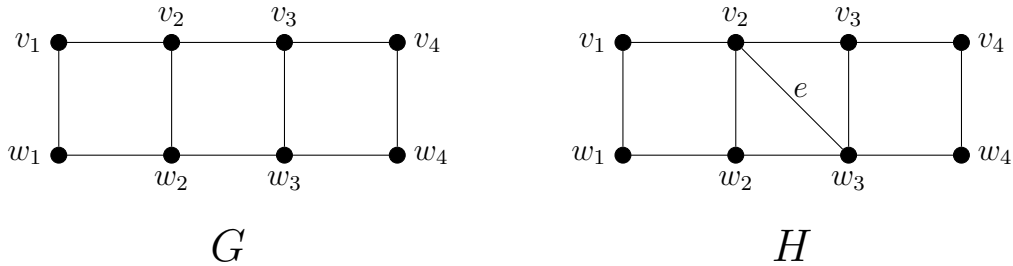
\begin{figure}[h]
\begin{tikzpicture}[scale=1.5,every node/.style=vertex,plain edges]
\begin{scope}[name prefix=G]
\draw
  (0,1) node["$v_1$" left]  (v1) {} --
++(1,0) node["$v_2$" above] (v2) {} --
++(1,0) node["$v_3$" above] (v3) {} --
++(1,0) node["$v_4$" right] (v4) {};
\draw
  (0,0) node["$w_1$" left]  (w1) {} --
++(1,0) node["$w_2$" below] (w2) {} --
++(1,0) node["$w_3$" below] (w3) {} --
++(1,0) node["$w_4$" right] (w4) {};
\draw
  (v1) -- (w1) (v2) -- (w2) (v3) -- (w3) (v4) -- (w4);
\node[caption node] at (1.5,-0.8) {$G$};
\end{scope}
\begin{scope}[xshift=5cm,name prefix=H]
\draw
  (0,1) node["$v_1$" left]  (v1) {} --
++(1,0) node["$v_2$" above] (v2) {} --
++(1,0) node["$v_3$" above] (v3) {} --
++(1,0) node["$v_4$" right] (v4) {};
\draw
  (0,0) node["$w_1$" left]  (w1) {} --
++(1,0) node["$w_2$" below] (w2) {} --
++(1,0) node["$w_3$" below] (w3) {} --
++(1,0) node["$w_4$" right] (w4) {};
\draw
  (v1) -- (w1) (v2) -- (w2) (v3) -- (w3) (v4) -- (w4)
  (v2) to["$e$" text node] (w3);
\node[caption node] at (1.5,-0.8) {$H$};
\end{scope}
\end{tikzpicture}
\caption{A zero forcing example.}
\label{illustrate-chron}
\end{figure}

\begin{example}
\label{chron-example}
Consider the graphs $G$ and $H$ in Figure \ref{illustrate-chron}, which are identical except that $e=v_2w_3\in E(H)$.  $B=\{v_1,w_1\}$ is a standard zero forcing set for both $G$ and $H$, and the ordered set $(v_1\to v_2,w_1\to w_2,w_2\to w_3,v_2\to v_3,v_3\to v_4,w_3\to w_4)$ is a chronological list of forces for both graphs.  Note that $\mathcal{F}=(F^{(1)}=\{v_1\to v_2,w_1\to w_2\},F^{(2)}=\{w_2\to w_3\},F^{(3)}=\{v_2\to v_3\},F^{(4)}=\{v_3\to v_4,w_3\to w_4\})$ is a propagating family of forces on $H$.  We can also view $\F$ as an ordered set of sets of forces in $G$ with the forces in $F^{(k)}$ all occurring in the $k$-th time-step.  We particularly note that performing those forces at the specified time-steps is consistent with the standard zero forcing color change rule. Indeed, this would hold if $\mathcal{F}$ were any other propagating family of forces on $H$ not involving a force along $e$.
\end{example}

It is clear that neither a chronological list of forces nor a propagating family of forces describes the process of $\F$ forcing in $G$ in the preceding example, since $\F$ is neither one with respect to $G$.  In order to discuss the observations of Example \ref{chron-example} formally, we therefore introduce a more general forcing process.  As we will see later on, this new type of forcing process is useful in both the standard zero forcing setting and the PSD forcing setting, so we define the process universally.

\begin{definition}
Let $G$ be a graph, let $X$-CCR be a color change rule, and let $B$ be any set of blue vertices. The \emph{collection of possible $X$-forces} for $B$ in $G$ is
\[S_X(G,B)=\{v\to u: v\to u \text{ is a valid $X$-force  in $G$ when exactly the vertices in $B$ are blue}\}.\]
\end{definition}

\begin{definition}
\label{rel_chron}
Let $G$ be a graph, let $X$-CCR be a color change rule, and let $B$ be an $X$-forcing set of $G$. Consider an ordered family  $\mathcal{F}=\{F^{(k)}\}_{k=1}^K$, where each $F^{(k)}$ is a set of $X$-forces. 
Define \[E_{\mathcal{F}}^{[0]}=E_{\F}^{(0)}=B, \ E_{\mathcal{F}}^{(k)}=\{u:v\to u \in F^{(k)} \text{ for some $v$}\}, \mbox{ and } E_{\mathcal{F}}^{[k]}=\bigcup_{j=0}^k E_{\mathcal{F}}^{(j)}\] for $k=1,2,\dots,K$. Then $\mathcal{F}$ is a \emph{relaxed chronology} of $X$-forces for $B$ on $G$ if 
\begin{enumerate}
    \item $F^{(k)}\subseteq S_X(G,E_{\mathcal{F}}^{[k-1]})$ for $k=1,2,\dots,K$,
    \item $v_1\to u,v_2\to u\in F^{(k)}$ implies $v_1=v_2$, and \label{rel_chron_cond2}
    \item $E_{\mathcal{F}}^{[K]}=V(G)$. \label{rel_chron_cond3}
\end{enumerate}
In this case, we call $\{E_{\mathcal{F}}^{[k]}\}_{k=0}^K$ the \emph{expansion sequence} of $B$ induced by $\mathcal{F}$, each individual $E_{\mathcal{F}}^{[k]}$ the \emph{$k$-th expansion} of $B$ induced by $\mathcal{F}$, and $K$ the \emph{completion time} of $\mathcal{F}$, which we  denote by $\ct(\mathcal{F})$. When the color change rule is clear from context, the $X$ can be dropped.
\end{definition}

Notice that given an $X$-forcing set, we can inductively construct a relaxed chronology by choosing a subset of the possible $X$-forces at each time-step. Condition \ref{rel_chron_cond2} of Definition \ref{rel_chron} ensures that multiple vertices do not force the same vertex, while condition \ref{rel_chron_cond3} ensures that we do not terminate until some time after all vertices of $G$ are blue. A particularly unusual feature of this process is that we will allow for $F^{(k)}$ to be the empty set since we allow any subset $F^{(k)}$ of the valid $X$-forces at each time-step to be performed. Hence, $\ct(\mathcal{F})$ is not necessarily the first time-step when all of $V(G)$ is blue.

In the special case of the standard zero forcing color change rule, we define analogs of two definitions from zero forcing.

\begin{definition}\label{def:F-active-chain}
Let $\mathcal{F}=\{F^{(k)}\}_{k=1}^K$ be a relaxed chronology of forces for the standard zero forcing color change rule on a graph $G$. 
\begin{enumerate}
    \item For $v\in V(G)$, the \emph{$\mathcal{F}$-active times} $\act_{\mathcal{F}}(v)\subseteq \{0,1,2,\dots,K\}$ are the time-steps when $v$ is active with respect to $\mathcal{F}$, that is, $k\in \{0,1,2,\dots, K\}$ is in $\act_{\mathcal{F}}(v)$ if and only if $v$ is blue after time-step $k$ and has not performed a force (the  $\F$ can be omitted when it is clear from context).  
    \item The \emph{chain set} defined by $\mathcal{F}$ is the chain set of the underlying set of forces $\bigcup_{k=1}^K F^{(k)}$.
\end{enumerate}
\end{definition}

Observe that relaxed chronologies generalize both propagating families of forces and chronological lists of forces. If $F^{(k)}$ is maximal for all $k$, then the expansion sequence $\{E_{\mathcal{F}}^{[k]}\}$ reduces to  $\{B^{[k]}\}$, the set of vertices that are blue after time-step $k$ using the customary $X$-propagation process. Additionally, if $|F^{(k)}|=1$ for all $k$, then $\mathcal{F}$ reduces to a chronological list of forces.

Returning briefly to Example \ref{chron-example}, observe that $\F$ is a relaxed chronology for $B$ in both $G$ and $H$. More generally, any relaxed chronology for $B$ in $H$ that does not contain a force along $e$ is a relaxed chronology for $B$ in $G$ since $G=H-e$.

\subsection{Parallel increasing path covers}
Using the framework of relaxed chronologies, we can now develop a model of chain sets from a global perspective.  The key objects in this new perspective are structures called parallel increasing path covers.  To help motivate and introduce this model and these structures, we consider another example.

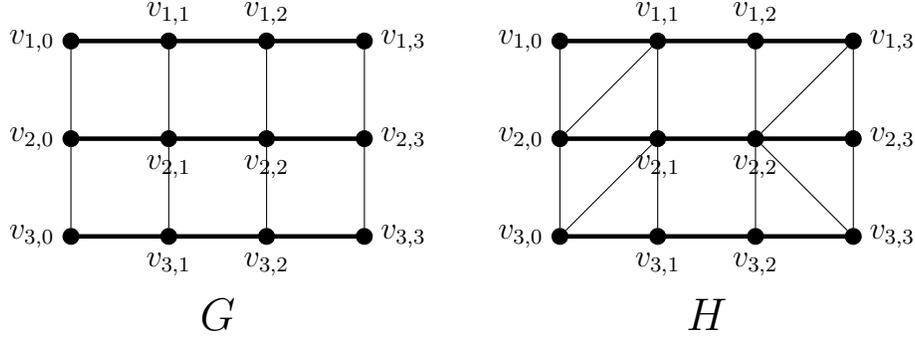
\begin{figure}[h]
\begin{tikzpicture}[scale=1.3,every node/.style=vertex]
\begin{scope}[name prefix=G]
\draw[bold edges]
  (0,2) node["$v_{1,0}$" left]  (v10) {} --
++(1,0) node["$v_{1,1}$" above] (v11) {} --
++(1,0) node["$v_{1,2}$" above] (v12) {} --
++(1,0) node["$v_{1,3}$" right] (v13) {};
\draw[bold edges]
  (0,1) node["$v_{2,0}$" left]  (v20) {} --
++(1,0) node["$v_{2,1}$" below] (v21) {} --
++(1,0) node["$v_{2,2}$" below] (v22) {} --
++(1,0) node["$v_{2,3}$" right] (v23) {};
\draw[bold edges]
  (0,0) node["$v_{3,0}$" left]  (v30) {} --
++(1,0) node["$v_{3,1}$" below] (v31) {} --
++(1,0) node["$v_{3,2}$" below] (v32) {} --
++(1,0) node["$v_{3,3}$" right] (v33) {};
\draw[plain edges]
  (v10) -- (v20) -- (v30)
  (v11) -- (v21) -- (v31)
  (v12) -- (v22) -- (v32)
  (v13) -- (v23) -- (v33);
\node[caption node] at (1.5,-0.8) {$G$};
\end{scope}
\begin{scope}[xshift=5cm,name prefix=H]
\draw[bold edges]
  (0,2) node["$v_{1,0}$" left]  (v10) {} --
++(1,0) node["$v_{1,1}$" above] (v11) {} --
++(1,0) node["$v_{1,2}$" above] (v12) {} --
++(1,0) node["$v_{1,3}$" right] (v13) {};
\draw[bold edges]
  (0,1) node["$v_{2,0}$" left]  (v20) {} --
++(1,0) node["$v_{2,1}$" below] (v21) {} --
++(1,0) node["$v_{2,2}$" below] (v22) {} --
++(1,0) node["$v_{2,3}$" right] (v23) {};
\draw[bold edges]
  (0,0) node["$v_{3,0}$" left]  (v30) {} --
++(1,0) node["$v_{3,1}$" below] (v31) {} --
++(1,0) node["$v_{3,2}$" below] (v32) {} --
++(1,0) node["$v_{3,3}$" right] (v33) {};
\draw[plain edges]
  (v10) -- (v20) -- (v30)
  (v11) -- (v21) -- (v31)
  (v12) -- (v22) -- (v32)
  (v13) -- (v23) -- (v33)
  (v20) -- (v11)
  (v30) -- (v21)
  (v22) -- (v13)
  (v22) -- (v33);
\node[caption node] at (1.5,-0.8) {$H$};
\end{scope}
\end{tikzpicture}
\caption{Another zero forcing example.}
\label{motivate-pip}
\end{figure}

\begin{example} \label{ex:motivate-pip}
Consider the graphs in Figure \ref{motivate-pip}.  $B=\{v_{1,0},v_{2,0},v_{3,0}\}$ is a standard zero forcing set for each, and $\F=\{F^{(k)}\}_{k=1}^8=(\{v_{1,0}\to v_{1,1}\},\emptyset,\{v_{2,0}\to v_{2,1}\},\{v_{3,0}\to v_{3,1},v_{1,1}\to v_{1,2}\},\{v_{2,1}\to v_{2,2}\},\{v_{1,2}\to v_{1,3},v_{3,1}\to v_{3,2}\},\{v_{3,2}\to v_{3,3}\},\{v_{2,2}\to v_{2,3}\})$ is a relaxed chronology for both graphs.  For each vertex $v_{i,j}$, let $A_{i,j}=\act_{\mathcal{F}}(v_{i,j})$.
Then we obtain the following:
\begin{align*}
    A_{1,0}&=\{0\} & A_{1,1}&=\{1,2,3\} & A_{1,2}&=\{4,5\} & A_{1,3}&=\{6,7,8\} \\
    A_{2,0}&=\{0,1,2\} & A_{2,1}&=\{3,4\} & A_{2,2}&=\{5,6,7\} & A_{2,3}&=\{8\} \\
    A_{3,0}&=\{0,1,2,3\} & A_{3,1}&=\{4,5\} & A_{3,2}&=\{6\} & A_{3,3}&=\{7,8\}.
\end{align*}

\noindent From these sets, we observe some interesting properties, which hold for both $G$ and $H$:
\begin{enumerate}
    \item Only one vertex per forcing chain is active after a given time-step, so for each $i \in \{1,2,3\}$, the collection of sets $\{A_{i,j}\}_{j=0}^3$ partitions the set $\{0,1,2,\dots,8\}$.
    \item If two vertices $v_{i_1,j_1}$ and $v_{i_2,j_2}$ are adjacent but are not contained in the same forcing chain, then
\[A_{i_1,j_1} \cap A_{i_2,j_2} \not = \emptyset.\]
\end{enumerate}
\end{example}

To further explore these two properties we provide the following definitions.
 Note that if $S_1$ and $S_2$ are sets of integers such that $x_1<x_2$ for all $x_1\in S_1$ and $x_2\in S_2$, then we write $S_1<S_2$.
\begin{definition}
Let $K \in \mathbb N$, and let $\mathcal A=\{A_j\}_{j=0}^{n_{\mathcal A}-1}$ be a partition of the set $\{0,1,2,\dots,K\}$ into $n_{\mathcal{A}}$ parts. 
 If $j_1 < j_2$ implies that   $ A_{j_1}< A_{j_2}$ for each pair $j_1,j_2 \in \{0,1,2,\dots,n_{\mathcal A}-1\}$, 
then we say that $\mathcal A=(A_j)_{j=0}^{n_{\mathcal A}-1}$ is a \emph{block partition} of $\{0,1,2,\dots,K\}$. 
\end{definition}

\begin{definition}\label{def:PIP}
Let $G$ be a graph, and let $\mathcal Q=\{Q_i\}_{i=1}^m$ be a path cover of $G$ with $n_i=\abs{Q_i}$ for each $i \in \{1,2,\dots,m\}$. Label $V(G)$ so that the vertices of the path $Q_i$ are $\{v_{i,j}\}_{j=0}^{n_i-1}$ in path order (i.e., $v_{i,j_1}v_{i,j_2} \in E(G)$ if and only if $\abs{j_1-j_2}=1$).  Choose $K \in \mathbb N$
and for each $i \in \{1,2,\dots,m\}$, let $\mathcal A_i=(A_{i,j})_{j=0}^{n_i-1}$ be a block partition of $\{0,1,2,\dots,K\}$, where we say $A_{i,j}$ \emph{corresponds} to vertex $v_{i,j}$.  If  
  for distinct $i_1,i_2 \in \{1,2,\dots,m\}$, 
  \begin{equation}\label{non-empty-int}
      v_{i_1,j_1}v_{i_2,j_2} \in E(G) \mbox{ implies }A_{i_1,j_1} \cap A_{i_2,j_2} \not = \emptyset,
  \end{equation}
then $\mathcal Q$ is a \emph{parallel increasing path cover} (or \emph{PIP}) of $G$, and the collection of block partitions $\{\mathcal{A}_{i}\}_{i=1}^m$ is a {\em witness} that $\mathcal Q$ is a parallel increasing path cover  of $G$. Define $\PIP(G)$ to be
\[\PIP(G)=\min\big{\{}\abs{\mathcal Q}: \mathcal Q\text{~is a parallel increasing path cover of~} G\big{\}}.\]
If $\mathcal Q$ is a parallel increasing path cover of $G$ such that $\abs{\mathcal Q}=\PIP(G),$ then we will say that $\mathcal Q$ is a \emph{minimum parallel increasing path cover} of $G$.
\end{definition}

Note that for any path cover $\mathcal Q=\{Q_i\}_{i=1}^m$, one can always choose sufficiently large $K\in \mathbb{N}$ and define a collection of block partitions $\{\mathcal{A}_i\}_{i=1}^m$ of $\{0,1,2,\dots,K\}$ where each $A_{i,j}$ corresponds to a vertex $v_{i,j}$ (which is the $(j+1)$th vertex in the $i$th path), but the key property of parallel increasing path covers is that a collection of block partitions can be chosen that satisfies the property in (\ref{non-empty-int}).

For the graphs in Example \ref{ex:motivate-pip}, the bold edges form a path cover $\mathcal Q$, and the block partitions $\mathcal A_i=(A_{i,j})_{j=0}^3$ witness that $\mathcal Q$ is a parallel increasing path cover of those graphs.  As another example, for the tree in Figure \ref{pip-tree}(a), a path cover $\mathcal Q$ is indicated by thick edges. The following collection of block partitions $\{\mathcal A_i\}_{i=1}^3$ is a witness that $\mathcal Q$ is a parallel increasing path cover of that tree:

\begin{center}
\begin{tabular}{ c c c c c }
$\mathcal{A}_1:$ & $A_{1,0}=\{0\}$ & $A_{1,1}=\{1\}$ & $A_{1,2}=\{2,3,4\}$\\
$\mathcal{A}_2:$ & $A_{2,0}=\{0,1\}$ & $A_{2,1}=\{2\}$ & $A_{2,2}=\{3,4\}$\\
$\mathcal{A}_3:$ & $A_{3,0}=\{0\}$ & $A_{3,1}=\{1,2\}$ & $A_{3,2}=\{3\}$ & $A_{3,3}=\{4\}$.
\end{tabular}
\end{center}

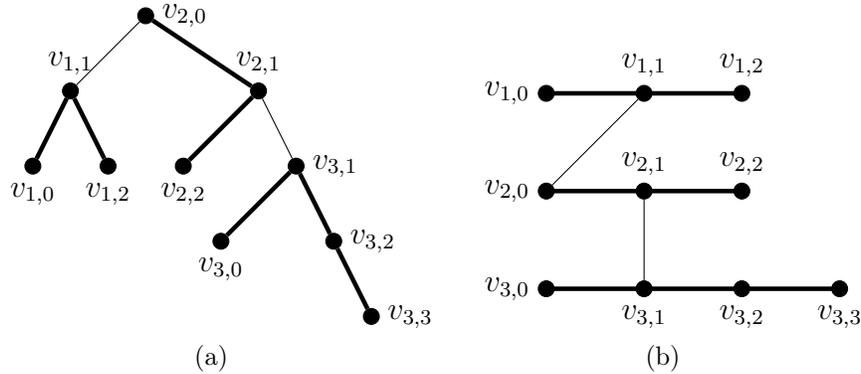
\begin{figure}[h!]
    \centering
    \subfigure[]{
\begin{tikzpicture}[every node/.style=vertex,plain edges]
\node["$v_{1,0}$" below] (v10) at (-1.5,-2) {};
\node["$v_{1,1}$" above] (v11) at (-1,-1) {} edge[bold] (v10);
\node["$v_{1,2}$" below] (v12) at (-0.5,-2) {} edge[bold] (v11);
\node["$v_{2,0}$" right] (v20) at (0,0) {} edge (v11);
\node["$v_{2,1}$" above] (v21) at (1.5,-1) {} edge[bold] (v20);
\node["$v_{2,2}$" below] (v22) at (0.5,-2) {} edge[bold] (v21);
\node["$v_{3,0}$" below] (v30) at (1,-3) {};
\node["$v_{3,1}$" right] (v31) at (2,-2) {} edge[bold] (v30) edge (v21);
\node["$v_{3,2}$" right] (v32) at (2.5,-3) {} edge[bold] (v31);
\node["$v_{3,3}$" right] (v33) at (3,-4) {} edge[bold] (v32);
\end{tikzpicture}}
\quad
\subfigure[]{
\begin{tikzpicture}[scale=1.3,every node/.style=vertex]
\draw[bold edges]
  (0,2) node["$v_{1,0}$" left] {} --
++(1,0) node["$v_{1,1}$" above] (v11) {} --
++(1,0) node["$v_{1,2}$" above] {};
\draw[bold edges]
  (0,1) node["$v_{2,0}$" left]  (v20) {} --
++(1,0) node["$v_{2,1}$" above] (v21) {} --
++(1,0) node["$v_{2,2}$" above] {};
\draw[bold edges]
  (0,0) node["$v_{3,0}$" left] {} --
++(1,0) node["$v_{3,1}$" below] (v31) {} --
++(1,0) node["$v_{3,2}$" below] {} --
++(1,0) node["$v_{3,3}$" below] {};
\draw[plain edges]
  (v11) -- (v20)
  (v21) -- (v31);
\end{tikzpicture}}
    \caption{\label{pip-tree}A parallel increasing path cover shown (a) with the tree drawn naturally, and (b) redrawn with the paths horizontal.}
\end{figure}

The name parallel increasing path cover has been chosen because given a parallel increasing path cover $\mathcal Q$, for each $Q_i, Q_j \in \mathcal Q$ distinct, $G\left[V(Q_i) \cup V(Q_j)\right]$ has the familiar structure of a graph on two parallel paths.  However, even though the structure of two parallel paths might be very familiar and thus be a desirable property for a definition, a collection of paths which taken pairwise are parallel paths does not necessarily form a parallel increasing path cover, as seen in Figure \ref{pairwise}.  

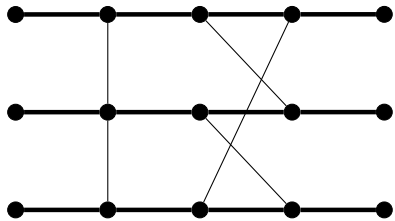
\begin{figure}[h!]
    \centering
\begin{tikzpicture}[every node/.style={vertex,on chain,join},every join/.style=bold,scale=1.3]
\path[start chain=1] (0,2) node {} node {} node {} node {} node {};
\path[start chain=2] (0,1)
  node {}
  node[join=with 1-2 by plain] {}
  node {}
  node[join=with 1-3 by plain] {}
  node {};
\path[start chain=3] (0,0)
  node {}
  node[join=with 2-2 by plain] {}
  node[join=with 1-4 by plain] {}
  node[join=with 2-3 by plain] {}
  node {};
\end{tikzpicture}
    \caption{A collection of paths that is not a parallel increasing path cover, but where the vertices of any two paths induce a graph on two parallel paths.}
    \label{pairwise}
\end{figure}

Rather, an additional property is needed,
which is the requirement that if two elements of distinct block partitions correspond to two adjacent vertices, then these elements must have a nonempty intersection, as stated in (\ref{non-empty-int}).  This property ensures that in some sense the paths progress in the same direction and in a manner consistent with standard zero forcing. 

\subsection{The relationship between relaxed chronologies, standard zero forcing, and parallel increasing path covers} The next theorem explicitly establishes the connection between parallel increasing path covers and relaxed chronologies in standard zero forcing. 

\begin{theorem}\label{parallel}
Let $G$ be a graph. Then $\mathcal Q$ is a parallel increasing path cover of $G$ if and only if $\mathcal Q$ is a chain set for some relaxed chronology $\mathcal F$ of a standard zero forcing set $B$ of $G$.  Under this correspondence, a collection of block partitions  $\{(A_{i,j})_{j=0}^{n_i-1}\}_{i=1}^m$ that is a witness of $\mathcal Q$ as a PIP corresponds to a relaxed chronology $\mathcal{F}$ where $\{(A_{i,j})_{j=0}^{n_i-1}\}_{i=1}^m$ records the active time-steps for vertices  $\{v_{i,j}\}_{j=0}^{n_i-1}\,_{i=1}^m$ with $\{v_{i,0}\}_{i=1}^m$ as the zero forcing set. \end{theorem}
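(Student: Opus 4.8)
The plan is to prove the two directions separately, using the active-times/block-partition dictionary as the explicit correspondence throughout: in each direction I translate between a scheduled force $v_{i,j-1}\to v_{i,j}$ occurring at time-step $k$ and the block $A_{i,j}$ beginning at $k$. Before starting I would record the one structural fact that makes blocks tractable, namely that since a block partition of $\{0,1,\dots,K\}$ is a partition whose parts are totally ordered by $<$, each part is forced to be a contiguous nonempty interval, with $\min A_{i,0}=0$, $\max A_{i,n_i-1}=K$, and $\max A_{i,j}=\min A_{i,j+1}-1$ for $j<n_i-1$.

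For the backward direction, suppose $\mathcal F=\{F^{(k)}\}_{k=1}^K$ is a relaxed chronology of a standard zero forcing set $B$ with chain set $\mathcal Q$. Each forcing chain induces a path in $G$ (as recalled in the preliminaries), so $\mathcal Q$ is a path cover; I write its chains as $Q_i=(v_{i,0},\dots,v_{i,n_i-1})$ with $v_{i,0}\in B$, and set $A_{i,j}=\act_{\mathcal F}(v_{i,j})$. First I check each $\mathcal A_i=(A_{i,j})_j$ is a block partition: along a single chain $v_{i,0}$ is active from time $0$ until the step it performs $v_{i,0}\to v_{i,1}$, then $v_{i,1}$ becomes active, and so on, so the active sets are consecutive nonempty intervals tiling $\{0,\dots,K\}$. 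The substance is the intersection property (\ref{non-empty-int}). Given $v_{i_1,j_1}v_{i_2,j_2}\in E(G)$ with $i_1\ne i_2$ and $A_{i_1,j_1}\cap A_{i_2,j_2}=\emptyset$, I assume without loss of generality $\max A_{i_1,j_1}<\min A_{i_2,j_2}\le K$; then $v_{i_1,j_1}$ is no longer active after time $\max A_{i_1,j_1}<K$, so it performs a force $v_{i_1,j_1}\to v_{i_1,j_1+1}$ at time-step $\max A_{i_1,j_1}+1$, while $v_{i_2,j_2}$ is still white at that moment. Since $v_{i_2,j_2}\ne v_{i_1,j_1+1}$, the vertex $v_{i_1,j_1}$ would then have two white neighbors, contradicting CCR-$\Z$; this yields (\ref{non-empty-int}).

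For the forward direction, I start from a PIP $\mathcal Q$ with witness $\{\mathcal A_i\}$, set $B=\{v_{i,0}\}_{i=1}^m$, and schedule the force $v_{i,j-1}\to v_{i,j}$ at time-step $k=\min A_{i,j}$, i.e. $F^{(k)}=\{v_{i,j-1}\to v_{i,j}:\min A_{i,j}=k\}$. A quick induction gives $E_{\mathcal F}^{[k]}=\{v_{i,j}:\min A_{i,j}\le k\}$, which immediately delivers condition (3) of Definition \ref{rel_chron} (as every block lies in $\{0,\dots,K\}$), condition (2) (each target is hit by a single scheduled force), the claim that the active times recover the $A_{i,j}$, and that the chain set is $\mathcal Q$. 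The crux is condition (1): each scheduled force must be legal under CCR-$\Z$. At the moment $v_{i,j-1}$ forces, namely with blue set $E_{\mathcal F}^{[k-1]}$ for $k=\min A_{i,j}=\max A_{i,j-1}+1$, I check that $v_{i,j}$ is white while every other neighbor of $v_{i,j-1}$ is blue: its chain-predecessor is blue since its block starts strictly earlier, and for any off-chain neighbor $v_{i',j'}$ the PIP hypothesis (\ref{non-empty-int}) produces $t\in A_{i,j-1}\cap A_{i',j'}$, whence $\min A_{i',j'}\le t\le\max A_{i,j-1}=k-1$ and so $v_{i',j'}\in E_{\mathcal F}^{[k-1]}$. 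Thus $v_{i,j-1}$ has the unique white neighbor $v_{i,j}$ and the force is valid; since the resulting $\mathcal F$ colors all of $V(G)$, $B$ is a standard zero forcing set, removing any apparent circularity in invoking Definition \ref{rel_chron}.

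The main obstacle is exactly this equivalence between the set-intersection condition (\ref{non-empty-int}) and the legality of a CCR-$\Z$ force (the unique-white-neighbor requirement): in the forward direction it is the verification of condition (1), where (\ref{non-empty-int}) is precisely what forces every off-chain neighbor to be blue before the force fires, and in the backward direction it is the converse deduction of (\ref{non-empty-int}) from force legality. Everything else is the bookkeeping of translating ``the block $A_{i,j}$ starts at time $k$'' into ``$v_{i,j}$ is forced at time-step $k$'', for which the contiguous-interval structure of block partitions is the convenient workhorse.
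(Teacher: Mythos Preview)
Your proof is correct and follows essentially the same approach as the paper: in both directions you set up the dictionary $A_{i,j}=\act_{\mathcal F}(v_{i,j})$ (equivalently, schedule $v_{i,j-1}\to v_{i,j}$ at time $\min A_{i,j}$), identify the expansion sets as $E^{[k]}=\{v_{i,j}:\min A_{i,j}\le k\}$, and reduce both the legality of the force and the intersection condition~(\ref{non-empty-int}) to the same contradiction about a would-be forcer having two white neighbors. The only cosmetic difference is that the paper phrases the forward direction by fixing a vertex $v_{a,b}\in E^{[k+1]}\setminus E^{[k]}$ and exhibiting a valid forcer, whereas you fix the scheduled force and verify it is valid; the content is identical.
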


\begin{proof}
Let $\mathcal Q$ be a parallel increasing path cover of $G$, with  the vertices labeled $\{v_{i,j}\}_{j=0}^{n_i-1}\,_{i=1}^m$ as in Definition \ref{def:PIP} and  $\{(A_{i,j})_{j=0}^{n_i-1}\}_{i=1}^m$ denoting  a collection of block partitions of $\{0,1,2,\dots,K\}$  witnessing that $\mathcal Q$ is a PIP. 
For $k=0,1,\dots,K$, define $E^{[k]}=\big{\{}v_{i,j}: \min A_{i,j} \leq k\big{\}}$. We  show that when $E^{[k]}$ is blue, then the remaining white vertices in $E^{[k+1]}$ can be forced blue  with forces along the paths using the standard zero forcing color change rule. This  implies that $\{E^{[k]}\}_{k=0}^K$ is an expansion sequence of $B=E^{[0]}=\{v_{i,0}\}_{i=1}^m$ for some relaxed chronology of forces $\mathcal{F}$ where all forces occur along the paths in $\mathcal Q$.

Suppose the vertices in $E^{[k]}=\bigl\{v_{i,j}: \min A_{i,j} \leq k\bigr\}$ are blue, and consider $v_{a,b} \in E^{[k+1]} \setminus E^{[k]}$. So 
 $Q_{a} $ is the unique element of $\mathcal Q$ containing ${v_{a,b}}$ and  there is a unique vertex $v_{a,b-1}$ preceding  $v_{a,b}$ in $Q_{a}$. Observe that  $\max A_{a,b-1}=k$.  Then  $v_{a,b-1}$ is blue, and  $v_{a,b}$ is the only white neighbor of  $v_{a,b-1}$ in $Q_{a}$. 
Also, since $\{(A_{i,j})_{j=0}^{n_i-1}\}_{i=1}^m$ is a collection of block partitions of $\{0,1,2,\dots,K\}$  witnessing that $\mathcal Q$ is a PIP, if  $v_{a,b-1}$ is adjacent to a vertex $v_{a',b'}\notin Q_{a}$, then $A_{a,b-1} \cap A_{a',b'} \neq \emptyset$.  Then $v_{ a',b'}\in E^{[k]}$ because $\max A_{ a,b-1}=k$. Thus ${ v_{a,b}}$ is the unique neighbor of $v_{a,b-1}$ in $G$ that is not blue, and ${v_{a,b-1}\to v_{a,b}}\in S(G,E^{[k]})$.  Hence, the remaining white vertices in $E^{[k+1]}$ can be colored blue by $E^{[k]}$ performing forces along the paths in $\mathcal Q$.  Thus we can select $F^{(k)}\subseteq S(G,E^{[k]})$ so that $\mathcal{F}=\{F^{(k)}\}_{k=0}^K$ is a relaxed chronology of standard forces. By construction, $\{(A_{i,j})_{j=0}^{n_i-1}\}_{i=1}^m$ records precisely the active time-steps of the corresponding vertices  $\{v_{i,j}\}_{j=0}^{n_i-1}\,_{i=1}^m$.

We now prove the reverse direction. Let $\mathcal C  =\{C_i\}_{i=1}^m$ be the chain set given by the relaxed chronology of forces $\mathcal F=\{F^{(k)}\}_{k=1}^{K}$ acting on the standard zero forcing set $B=\{v_{i,0}\}_{i=1}^m$ of $G$. For each forcing chain $C_i$, label the vertices $\{v_{i,j}\}_{j=0}^{n_i-1}$ so that $v_{i,j}\to v_{i,j+1}\in\bigcup_{k=1}^K F^{(k)}$, and define $\{A_{i,j}\}_{j=0}^{n_i-1}$ by $A_{i,j}=\act_{\mathcal{F}}(v_{i,j})$. By construction, $(A_{i,j})_{j=0}^{n_i-1}$ is a block partition of $\{0,1,2,\dots,K\}$ for all $i$. 
Now suppose there exists $v_{i_1,j_1}v_{i_2,j_2} \in E(G)$ such that $i_1 \not = i_2$ and  $A_{i_1,j_1} \cap A_{i_2,j_2} = \emptyset$.  Each $A_{i,j}$ is a collection of consecutive integers, so without loss of generality we assume that $A_{i_1,j_1}<A_{i_2,j_2}$.  However, this implies $K \not \in A_{i_1,j_1}$, so there exists $k\in \{1,2,\dots,K\}$ and a vertex $v_{i_1,j_1+1}$ such that $v_{i_1,j_1}\to v_{i_1,j_1+1}$ during time-step $k$ of $\mathcal F$.  Then $k-1 \in A_{i_1,j_1}$, so $k-1<y$ for all $y \in A_{i_2,j_2}$. In particular, $v_{i_2,j_2}$ is white when $v_{i_1,j_1}$ forces {$v_{i_1,j_1+1}$}.  Since $v_{i_2,j_2} \not \in C_{i_1}$, we know $v_{i_2,j_2} \ne v_{i_1,j_1+1}$. We conclude that when $v_{i_1,j_1}$ forces $v_{i_1,j_1+1}$, it has two white neighbors, which is a contradiction. Thus, if $v_{i_1,j_1}v_{i_2,j_2} \in E(G)$ with $i_1 \neq i_2$, then $A_{i_1,j_1} \cap A_{i_2,j_2} \not = \emptyset$, and $\mathcal C$ is a parallel increasing path cover. 
\end{proof}

\begin{corollary}\label{corparallel}
For any graph $G$, $\PIP(G)=\Z(G)$. 
\end{corollary}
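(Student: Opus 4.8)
The plan is to derive this as an essentially immediate consequence of Theorem \ref{parallel}, the only real content being the bookkeeping observation that under the correspondence established there, the number of paths in a PIP equals the cardinality of the associated standard zero forcing set. Recall from the preliminaries that for a set of forces $F$ of a zero forcing set $B$, the chain set is $\mathcal C=\{C_v\}_{v\in B}$, with exactly one forcing chain emanating from each vertex of $B$; hence a chain set arising from a relaxed chronology of $B$ has exactly $\abs{B}$ paths. So whenever Theorem \ref{parallel} matches a PIP $\mathcal Q$ with a relaxed chronology $\mathcal F$ of a zero forcing set $B$, we have $\abs{\mathcal Q}=\abs{B}$.

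First I would prove $\PIP(G)\le \Z(G)$. Take a minimum standard zero forcing set $B$ with $\abs{B}=\Z(G)$. Since $B$ is a zero forcing set, it admits a relaxed chronology $\mathcal F$ (for instance a propagating family of forces, which is a relaxed chronology by the discussion following Definition \ref{def:F-active-chain}). By the forward direction of Theorem \ref{parallel}, the chain set $\mathcal Q$ of $\mathcal F$ is a parallel increasing path cover of $G$, and by the cardinality observation above $\abs{\mathcal Q}=\abs{B}=\Z(G)$. Taking the minimum over all PIPs gives $\PIP(G)\le \abs{\mathcal Q}=\Z(G)$.

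Next I would prove $\Z(G)\le \PIP(G)$. Take a minimum PIP $\mathcal Q$ with $\abs{\mathcal Q}=\PIP(G)$. By the reverse direction of Theorem \ref{parallel}, $\mathcal Q$ is the chain set of some relaxed chronology $\mathcal F$ of a standard zero forcing set $B$ of $G$, and again $\abs{B}=\abs{\mathcal Q}$. Since $B$ is in particular a standard zero forcing set, $\Z(G)\le\abs{B}=\abs{\mathcal Q}=\PIP(G)$. Combining the two inequalities yields $\PIP(G)=\Z(G)$.

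I do not expect any genuine obstacle here, since Theorem \ref{parallel} already does the heavy lifting of turning the combinatorial PIP condition into an honest zero forcing process and back. The one point that must be stated carefully, rather than assumed, is that the two extremal problems range over the \emph{same} underlying collection of objects once one passes through the correspondence, so that no PIP is left unaccounted for by some zero forcing set and vice versa; this is exactly the biconditional of Theorem \ref{parallel}, and the matching of cardinalities guarantees the two minima coincide rather than merely bound one another loosely.
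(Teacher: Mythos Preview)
Your proposal is correct and is precisely the intended argument: the paper states this corollary without proof immediately after Theorem \ref{parallel}, since the biconditional there together with the observation $\abs{\mathcal Q}=\abs{B}$ for a chain set of $B$ makes the two minima coincide. Your write-up simply spells out what the paper leaves implicit.
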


It is known that given a graph $G$ and a zero forcing set $B$ of $G$, a chronological list of forces acting on $B$ will create a chain set of $G$, which is a partition of the vertices of $G$.  However, given a graph $G$, a parallel increasing path cover $\mathcal Q$ of $G$, and a collection of block partitions $\{\mathcal A_i\}_{i=1}^m$ that is a witness for $\mathcal Q$, Definition \ref{def:PIP} shows that the elements of the block partitions have a direct relationship with the edge set of $G$: 
\[\text{For $i_1,i_2$ distinct, }v_{i_1,j_1}v_{i_2,j_2} \in E(G) \Longrightarrow A_{i_1,j_1} \cap A_{i_2,j_2} \not = \emptyset.\]
Due to this, not only can one find parallel increasing path covers of a graph $G$, but one can also reverse this process. Given a
number $K \in \mathbb N$ and a collection of block partitions $\{\mathcal A_i\}_{i=1}^m$ of $\{0,1,2,\dots,K\}$, one can construct the family of graphs for which $\{\mathcal A_i\}_{i=1}^m$ outlines a relaxed chronology of forces.  The specifics of this fact are laid out in the next definition and corollary.

\begin{definition}\label{pipgraphs}
Let $K,m \in \mathbb N$ and for each $i \in \{1,2,\dots,m\}$ let $\mathcal A_i=(A_{i,j})_{j=0}^{n_i-1}$ be a block partition of $\{0,1,2,\dots,K\}$.   Let $V=\{v_{i,j}\}_{j=0}^{n_i-1}\,_{i=1}^m$, $E_1=\big{\{}v_{i,j_1}v_{i,j_2}: \abs{j_1-j_2}=1\big{\}}$, and \[ E_2 = \big{\{}v_{i_1,j_1}v_{i_2,j_2}: i_1 \not = i_2 \text{~and~} A_{i_1,j_1} \cap A_{i_2,j_2} \not = \emptyset\big{\}}.\]  Define $\mathcal G_{\{\mathcal A_i\}_{i=1}^m}$
to be the set of  all graphs of the form $G(E)=(V,E_1\cup E)$  where  $E\subseteq E_2$.
We refer to $\mathcal G_{\{\mathcal A_i\}_{i=1}^m}$ 
as the family of graphs induced by $\{\mathcal A_i\}_{i=1}^m$.  
\end{definition}

\begin{example}
Consider the collection of block partitions $\{\mathcal{A}_1,\mathcal{A}_2,\mathcal{A}_3\}$ given by
\begin{center}
\begin{tabular}{c c c c c c }
    $\mathcal{A}_1$: & $A_{1,0}=\{0\}$ & $A_{1,1}=\{1,2,3\}$ & $A_{1,2}=\{4\}$\\
    $\mathcal{A}_2$:& $A_{2,0}=\{0,1,2,3,4\}$ \\
    $\mathcal{A}_3$: & $A_{3,0}=\{0\}$ & $A_{3,1}=\{1\}$ & $A_{3,2}=\{2\}$ & $A_{3,3}=\{3\}$ & $A_{3,4}=\{4\}$.
\end{tabular}
\end{center}
Now define the set of vertices $V$ and sets of edges $E_1$ and $E_2$: 
\[V=\{v_{1,0},v_{1,1},v_{1,2},v_{2,0},v_{3,0},v_{3,1},v_{3,2},v_{3,3},v_{3,4}\},\]
\[E_1=\{v_{1,0}v_{1,1},v_{1,1}v_{1,2},v_{3,0}v_{3,1},v_{3,1}v_{3,2},v_{3,2}v_{3,3},v_{3,3}v_{3,4}\},\]
\begin{equation*}\begin{split}
E_2 = \{&v_{1,0}v_{2,0},v_{2,0}v_{3,0},v_{1,0}v_{3,0},v_{1,1}v_{2,0},v_{2,0}v_{3,1},v_{1,1}v_{3,1},\\
  &v_{2,0}v_{3,2},v_{1,1}v_{3,2},v_{2,0}v_{3,3},v_{1,1}v_{3,3},v_{1,2}v_{2,0},v_{2,0}v_{3,4},v_{1,2}v_{3,4}\}.\end{split}\end{equation*}
Then $G(E_2)$ is the graph  with the largest edge set among graphs in $\mathcal G_{\{\mathcal A_i\}_{i=1}^3}$, and is shown in  Figure \ref{ex:partitiontograph}.  
The graph $G(\emptyset)$ has the smallest edge set among graphs in $\mathcal G_{\{\mathcal A_i\}_{i=1}^3}$.  In Figure \ref{ex:partitiontograph}, the thick edges are the edges of $G(\emptyset)$.
If we choose any $H$ that is a subgraph of $G(E_2)$ and contains $G(\emptyset)$, then we obtain another graph in $  \mathcal G_{\{\mathcal A_i\}_{i=1}^3}$ such that $\{\mathcal{A}_1,\mathcal{A}_2,\mathcal{A}_3\}$ defines a collection of block partitions for the parallel increasing path cover induced by the edges in ${E_1}$.
\end{example}
\begin{figure}[h]
  \centering
\begin{tikzpicture}[scale=1.25]
\begin{scope}[every node/.style=vertex,bold edges]
\draw[label position=above]
    (0,3) node["$v_{1,0}$"] (v10) {} --
    (1,3) node["$v_{1,1}$"] (v11) {} --
    (4,3) node["$v_{1,2}$"] (v12) {};
\draw[label position=left]
    (0,2) node["$v_{2,0}$"] (v20) {};
\draw[label position=below]
    (0,1) node["$v_{3,0}$"] (v30) {} --
    (1,1) node["$v_{3,1}$"] (v31) {} --
    (2,1) node["$v_{3,2}$"] (v32) {} --
    (3,1) node["$v_{3,3}$"] (v33) {} --
    (4,1) node["$v_{3,4}$"] (v34) {};
\end{scope}
\draw[plain edges] (v20.center) -- (v30.center) to[bend left=90,looseness=1.2] (v10.center) -- (v20.center) -- (v31.center) -- (v11.center) -- (v20.center) -- (v32.center) -- (v11.center) -- (v33.center) -- (v20.center) -- (v12.center) -- (v34.center) -- (v20.center);
\node at (-1,3) {$Q_1$};
\node at (-1,2) {$Q_2$};
\node at (-1,1) {$Q_3$};
\end{tikzpicture}
  \caption{The graph $G(E_2)$, with the edges in $E_1$ shown in bold.}
  \label{ex:partitiontograph}
\end{figure}
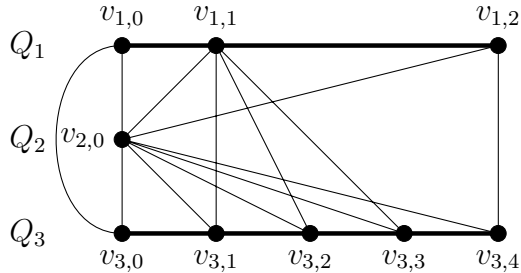

\begin{corollary}\label{corpipgraphs}
Let $G$ be a graph.
\begin{enumerate}
    \item If $G \in \mathcal G_{\{\mathcal A_i\}_{i=1}^m}$ for some collection of block partitions $\{\mathcal A_i\}_{i=1}^m$ with $\mathcal A_i=(A_{i,j})_{j=0}^{n_i-1}$, then $\mathcal C=\{C_i\}_{i=1}^m$ with $C_i=G\big{[}\{v_{i,j}\}_{j=0}^{n_i-1}\big{]}$ forms a chain set induced by a relaxed chronology of forces $\mathcal F$ acting on the zero forcing set $\{v_{i,0}\}_{i=1}^m$ of $G$.
    \item $\Z(G)=\min\{m: G \in \mathcal G_{\{\mathcal A_i\}_{i=1}^m} \text{ for some collection of block partitions } \{\mathcal A_i\}_{i=1}^m\}$.
\end{enumerate}
\end{corollary}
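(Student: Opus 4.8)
The plan is to deduce both parts from Theorem \ref{parallel} and Corollary \ref{corparallel}, since Definition \ref{pipgraphs} is engineered precisely so that membership $G\in\mathcal G_{\{\mathcal A_i\}_{i=1}^m}$ encodes that $\{\mathcal A_i\}_{i=1}^m$ is a witness that the horizontal paths form a PIP of $G$. First I would prove part (1). Suppose $G=G(E)=(V,E_1\cup E)$ for some $E\subseteq E_2$, with $V=\{v_{i,j}\}$ and block partitions $\{\mathcal A_i\}$ as in Definition \ref{pipgraphs}. Set $Q_i=G[\{v_{i,j}\}_{j=0}^{n_i-1}]$ and $\mathcal Q=\{Q_i\}_{i=1}^m$. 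The edges $E_1$ make each $Q_i$ an induced path (the only internal adjacencies among $\{v_{i,j}\}_j$ are the consecutive ones, since $E\subseteq E_2$ involves only cross-path pairs $i_1\neq i_2$), and every vertex lies in exactly one $Q_i$, so $\mathcal Q$ is a path cover of $G$. The defining property \eqref{non-empty-int} of a PIP is inherited directly: if $v_{i_1,j_1}v_{i_2,j_2}\in E(G)=E_1\cup E$ with $i_1\neq i_2$, then this edge lies in $E\subseteq E_2$, and by the definition of $E_2$ we have $A_{i_1,j_1}\cap A_{i_2,j_2}\neq\emptyset$. Hence $\{\mathcal A_i\}_{i=1}^m$ witnesses that $\mathcal Q$ is a PIP of $G$. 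Applying the forward direction of Theorem \ref{parallel} then yields that $\mathcal Q$ (equivalently $\mathcal C=\{C_i\}$ with $C_i=Q_i$) is the chain set induced by a relaxed chronology $\mathcal F$ acting on the standard zero forcing set $\{v_{i,0}\}_{i=1}^m$, which is exactly the claim.

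For part (2), I would prove the two inequalities separately. For $\Z(G)\le\min\{m:\dots\}$, take any $\{\mathcal A_i\}_{i=1}^m$ with $G\in\mathcal G_{\{\mathcal A_i\}_{i=1}^m}$; by part (1), $\{v_{i,0}\}_{i=1}^m$ is a standard zero forcing set of $G$ of size $m$, so $\Z(G)\le m$, and taking the minimum over such collections gives the inequality. For the reverse inequality $\min\{m:\dots\}\le\Z(G)$, I would start from a minimum standard zero forcing set $B$ with $|B|=\Z(G)$, choose any chronological list of forces (equivalently a propagating family, which by the remark after Definition \ref{def:F-active-chain} is a relaxed chronology $\mathcal F$), and apply the reverse direction of Theorem \ref{parallel} to obtain that the chain set $\mathcal C=\{C_i\}_{i=1}^{\Z(G)}$ is a PIP with witnessing block partitions $\{\mathcal A_i\}_{i=1}^{\Z(G)}$ of some $\{0,\dots,K\}$. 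I then need to verify $G\in\mathcal G_{\{\mathcal A_i\}}$: the edges of $E_1$ are present as the path edges of the chains, and every remaining edge $v_{i_1,j_1}v_{i_2,j_2}$ of $G$ with $i_1\neq i_2$ satisfies $A_{i_1,j_1}\cap A_{i_2,j_2}\neq\emptyset$ by the PIP property, hence lies in $E_2$; therefore $E(G)\setminus E_1\subseteq E_2$ and $G=G(E)$ for $E=E(G)\setminus E_1$, giving $G\in\mathcal G_{\{\mathcal A_i\}_{i=1}^{\Z(G)}}$ with $m=\Z(G)$.

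The only mild obstacle is bookkeeping at the boundary between Definition \ref{pipgraphs} and Theorem \ref{parallel}: I must confirm that the vertex labeling $\{v_{i,j}\}$ coming from the chain set matches the path-order labeling required by Definition \ref{def:PIP}, and that no edge of $G$ among vertices of a single chain other than the consecutive ones can appear (which holds because a forcing chain induces a path, as noted in the preliminaries). Once that identification is clean, both parts are immediate corollaries, so I would keep the write-up short, citing Theorem \ref{parallel} for the structural equivalence and Corollary \ref{corparallel} implicitly through the size count $|B|=\Z(G)$. The statement is essentially a restatement of Theorem \ref{parallel} in the language of the induced families $\mathcal G_{\{\mathcal A_i\}}$, so the proof is primarily a matter of unwinding definitions in both directions.
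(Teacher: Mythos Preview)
Your approach is correct and matches the paper's: the paper's proof is simply the one-line remark that the corollary ``follows immediately from Theorem \ref{parallel} and the definition of a parallel increasing path cover,'' and your write-up just unwinds that citation in detail, verifying that $G\in\mathcal G_{\{\mathcal A_i\}}$ is equivalent to $\{\mathcal A_i\}$ witnessing that the $Q_i$ form a PIP and then invoking both directions of Theorem \ref{parallel}.
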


\begin{proof}
The proof follows immediately from Theorem \ref{parallel} and the definition of a parallel increasing path cover.
\end{proof}

\begin{corollary}\label{corpipprop}
Let $G$ be a graph, $B$ be a standard zero forcing set of $G$, $\mathcal F$ be a propagating family of forces of $B$ on $G$,  and let $\{\mathcal A_i\}_{i=1}^{\abs{B}}= \{(A_{i,j})_{j=0}^{n_i-1}\}_{i=1}^{|B|}$ be a collection of block partitions  that record the $\mathcal F$-active time-steps of the vertices  $\{v_{i,j}\}_{j=0}^{n_i-1}\, _{i=1}^{\abs{B}}$ of the chain set $\mathcal C=\{C_i\}_{i=1}^{\abs{B}}$  defined by $\mathcal F$. Then for each $H \in \mathcal G_{\{\mathcal A_i\}_{i=1}^{\abs{B}}}$,
\[\Z(H) \leq \abs{B} \hspace{0.25in} \text{ and } \hspace{0.25in} \pt(H, B) \leq  \pt(G,B).\]
If in addition, $B$ is an efficient standard zero forcing set of $G$, then
\[\Z(H) \leq \Z(G) \hspace{0.25in} \text{ and } \hspace{0.25in} \pt(H, B) \leq \pt(G).\]
\end{corollary}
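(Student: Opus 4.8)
The plan is to deduce both displays from Corollary \ref{corpipgraphs} together with the general principle that maximal (propagation) forcing is never slower than a prescribed relaxed chronology. Fix $H \in \mathcal G_{\{\mathcal A_i\}_{i=1}^{\abs B}}$. By Corollary \ref{corpipgraphs}(1), the induced paths $C_i=H[\{v_{i,j}\}_{j=0}^{n_i-1}]$ form a chain set of $H$ given by a relaxed chronology $\mathcal F'$ acting on $B=\{v_{i,0}\}_{i=1}^{\abs B}$, and by the correspondence in Theorem \ref{parallel} the same block partitions $\{\mathcal A_i\}_{i=1}^{\abs B}$ record the $\mathcal F'$-active times in $H$. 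In particular $B$ is a standard zero forcing set of $H$, which immediately gives $\Z(H)\le \abs B$. Moreover, since $\{\mathcal A_i\}_{i=1}^{\abs B}$ records the active times of the \emph{propagating} family $\mathcal F$ on $G$, its ground set $\{0,1,\dots,K\}$ satisfies $K=\ct(\mathcal F)=\pt(G,B)$, and the construction of Theorem \ref{parallel} yields $\ct(\mathcal F')=K$ as well.

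The propagation bound then reduces to showing $\pt(H,B)\le \ct(\mathcal F')$. Let $\{E_{\mathcal F'}^{[k]}\}_{k=0}^K$ be the expansion sequence of $B$ induced by $\mathcal F'$ in $H$, and let $\{B^{[k]}\}$ be the blue sets of the propagation process for $B$ in $H$. I would prove $E_{\mathcal F'}^{[k]}\subseteq B^{[k]}$ for all $k$ by induction on $k$. The base case $E_{\mathcal F'}^{[0]}=B=B^{[0]}$ is clear. For the inductive step, take $u\in E_{\mathcal F'}^{(k)}$ with $v\to u\in F'^{(k)}$; since this is a valid standard force when exactly $E_{\mathcal F'}^{[k-1]}$ is blue, $v$ has $u$ as its unique white neighbor relative to $E_{\mathcal F'}^{[k-1]}$. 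By the inductive hypothesis $E_{\mathcal F'}^{[k-1]}\subseteq B^{[k-1]}$, so $v$ is blue and has at most one white neighbor relative to $B^{[k-1]}$; hence either $u$ is already blue or $v$ forces $u$, and in either case $u\in B^{[k]}$. Taking $k=K$ gives $V(H)=E_{\mathcal F'}^{[K]}\subseteq B^{[K]}$, so $\pt(H,B)\le K=\pt(G,B)$, completing the first display.

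For the second display, the efficient case is immediate by substitution: if $B$ is an efficient standard zero forcing set of $G$, then $\abs B=\Z(G)$ and $\pt(G,B)=\pt(G)$ by definition, so the inequalities just proved read $\Z(H)\le \Z(G)$ and $\pt(H,B)\le\pt(G)$.

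The only step requiring genuine work is the monotonicity claim $E_{\mathcal F'}^{[k]}\subseteq B^{[k]}$, i.e.\ that propagation dominates the given relaxed chronology step by step. The subtlety is verifying that a force valid against the smaller blue set $E_{\mathcal F'}^{[k-1]}$ is still realized against the larger blue set $B^{[k-1]}$; this relies on the fact that enlarging the blue set can only remove white neighbors of $v$, so the unique-white-neighbor condition cannot be destroyed in a way that prevents $u$ from becoming blue during time-step $k$ of propagation.
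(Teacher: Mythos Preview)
Your proof is correct and follows essentially the same approach as the paper. The paper's proof is slightly terser: it observes directly that $\mathcal F$ itself (not an auxiliary $\mathcal F'$) is a relaxed chronology of $B$ on $H$, and then asserts $\pt(H,B)\le\ct(\mathcal F)=\pt(G,B)$ without writing out the monotonicity induction $E_{\mathcal F}^{[k]}\subseteq B^{[k]}$ that you spell out explicitly.
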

 \begin{proof}
  Since $H \in \mathcal G_{\{\mathcal A_i\}_{i=1}^{\abs{B}}}$, it follows that $B$ is a zero forcing set of $H$ and $\mathcal F$ is a relaxed chronology of forces of $B$ on $H$.  Thus $\Z(H)\le \abs{B}$ and $ \pt(H,B) \leq \ct(\mathcal F)=\pt(G,B)$ because $\mathcal F$ is a propagating family of forces.   If in addition $B$ is an  efficient zero forcing set, then $B$ is a minimum zero forcing set and  $\Z(H) \leq \Z(G)$. Furthermore,  $\pt(G,B)= \pt(G)$. 
\end{proof}

\begin{remark}
For readers familiar with the minor monotone floor of zero forcing \cite{paramlong, HLS22}, we note that one can modify Definition \ref{def:PIP} (and necessary prior definitions) to define a structure analogous to PIPs for which corresponding versions of Theorem \ref{parallel} and Corollary \ref{corparallel} hold. Chain sets for the minor monotone floor of zero forcing in a graph $G$ correspond to chain sets for standard zero forcing in some supergraph of $G$ on the same vertex set.
Thus, to create an analog of Definition \ref{def:PIP} for the minor monotone floor of zero forcing, we simply allow each $\mathcal{Q}$ to be a path cover of some supergraph of $G$ on the same vertex set. Observe that property (\ref{non-empty-int}) is unchanged, as this is not related to $\mathcal{Q}$ being a path cover. The corresponding change for Definition \ref{pipgraphs} is to consider graphs of the form $G(E)=(V,E)$ with $E\subseteq E_1\cup E_2$.
\end{remark} 


\section{Applications of parallel increasing path covers }\label{s:PIP-apps}

We now apply the results of the previous section to standard and positive semidefinite zero forcing. Notice that in a parallel increasing path cover, the relationship between the collections of block partitions of time-steps and the edge set is based on nonempty intersections as described in \eqref{non-empty-int}. This property is preserved under certain operations, which we can use to generalize known results and establish new ones.

Hogben et al.\ defined the \textit{terminus} and \textit{reversal} of a set of forces in \cite{proptime}. They then compared the propagation time for a reversal with the original set of forces. We adapt these definitions to relaxed chronologies of forces.

\begin{definition}
Let $\mathcal{F}=\{F^{(k)}\}_{k=1}^N$ be a relaxed chronology of forces for a standard zero forcing set $B$ of a graph $G$.
\begin{enumerate}
    \item The \emph{terminus} of $\mathcal{F}$, denoted $\term(\mathcal{F})$, is the set of vertices of $G$ that do not perform any forces in $\mathcal{F}$. 
    \item The \emph{reversal} of $\mathcal{F}$, denoted $\rev(\mathcal{F})$, is the result of reversing the forces and time-steps in $\mathcal{F}$, i.e., $\rev(\mathcal{F})=\{F_{\rev}^{(k)}\}_{k=1}^K$ with
    \[F_{\rev}^{(k)}=\{v\to u:u\to v\in F^{(K-k+1)}\}.\]
\end{enumerate}
\end{definition}

Observe that the terminus depends only on the set of forces in  the relaxed chronology.  The next definition is equivalent to the definition of the standard propagation time of a set of standard forces in \cite{proptime} and extends the definition to other types of zero forcing.
\begin{definition}\label{d:pt-set-forces}
Let $G$ be a graph and $B$ be an $X$-forcing set of $G$.     For a set of $X$-forces $\mathcal{F}$ of $B$, define $E^{(0)}=B$ and for $t \ge 0$,  define $E^{(t+1)}$ to be the set of vertices  $w$ such that  $v\to w \in \mathcal F \cap S_X(G,\bigcup_{i=0}^{t} E^{(i)})$ for some $v$.  The {\em $X$-propagation  time of $\mathcal{F}$} in $G$, denoted  $\pt_X(G,\mathcal{F})$,   is the least $t_0$ such that $V(G)=\bigcup_{t=0}^{t_0} E^{(t)}$.  For a relaxed chronology $\mathcal F$, the $X$-\emph{propagation time} of $\mathcal{F}$ in $G$, denoted $\pt_X(G,\mathcal{F})$, is the propagation time of the underlying set of forces in $\mathcal{F}$.  
\end{definition}

It is clear that $\pt_X(G,\mathcal{F})\leq \ct(\mathcal{F})$.   The next lemma restates \cite[Theorem 2.6]{param}, generalizes  \cite[Observation 2.4]{proptime} to relaxed chronologies in  the obvious manner, and  improves \cite[Corollary 2.10]{proptime}.

 \begin{lemma}\label{lem:reverse}
Let $G$ be a graph, $B$ be a standard zero forcing set of $G$, and $\mathcal{F}$ be a relaxed chronology of forces for $B$. Then
\begin{enumerate}
    \item {\rm \cite{param}} $\term(\mathcal{F})$ is a standard zero forcing set of $G$,
    \item\label{cL43-2} $\rev(\mathcal{F})$ is a relaxed chronology for $\term(\mathcal{F})$, and 
    \item\label{cL43-3} $\pt(G,\rev(\mathcal{F}))= \pt(G,\mathcal{F})$. 
\end{enumerate}
\end{lemma}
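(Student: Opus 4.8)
The plan is to exploit the dictionary between relaxed chronologies of standard forces and PIP witnesses furnished by Theorem \ref{parallel}, together with a single order-reversing reflection of the time axis. Write $K=\ct(\mathcal{F})$, let $\mathcal C=\{C_i\}_{i=1}^m$ be the chain set of $\mathcal{F}$ with $C_i=(v_{i,0},\dots,v_{i,n_i-1})$, and let $A_{i,j}=\act_{\mathcal{F}}(v_{i,j})$ be the block partitions witnessing $\mathcal C$ as a PIP. The terminus is exactly $\term(\mathcal{F})=\{v_{i,n_i-1}\}_{i=1}^m$, and the forces of $\rev(\mathcal{F})$ run the chains backwards, so its chain set consists of the reversed paths $(v_{i,n_i-1},\dots,v_{i,0})$. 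The key device is the involution $\sigma(k)=K-k$ on $\{0,1,\dots,K\}$: relabelling the reversed path $C_i$ as $w_{i,j}=v_{i,n_i-1-j}$ and setting $A'_{i,j}=\sigma(A_{i,n_i-1-j})$ should produce a witness that the reversed chains form a PIP.

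For parts (1) and (2) I would first check that $\{A'_{i,j}\}$ is a legitimate witness. Since $\sigma$ reverses order, $A_{i,0}<\dots<A_{i,n_i-1}$ becomes $A'_{i,0}<\dots<A'_{i,n_i-1}$, so each $(A'_{i,j})_j$ is a block partition of $\{0,\dots,K\}$; and because $\sigma$ is a bijection it preserves nonempty intersections, so the intersection condition \eqref{non-empty-int} for $\mathcal C$ transfers verbatim to the reversed chains. Thus the reversed chains form a PIP whose initial vertices are $\{w_{i,0}\}=\term(\mathcal{F})$, which already recovers part (1) via Corollary \ref{corparallel} (matching the cited result of \cite{param}). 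For part (2), I would feed this witness into the reverse direction of Theorem \ref{parallel} and then verify that the resulting relaxed chronology is literally $\rev(\mathcal{F})$: a short computation shows $v_{i,j}$ is forced in the reversed process at time-step $K-\max A_{i,j}$, equivalently $E_{\rev(\mathcal{F})}^{[k]}=\{v_{i,j}:\max A_{i,j}\ge K-k\}$, and one checks directly that the reversed force $v_{i,j+1}\to v_{i,j}$ lies in $S(G,E_{\rev(\mathcal{F})}^{[k-1]})$ because every neighbor of $v_{i,j+1}$ other than $v_{i,j}$ is already blue at that step---the cross-neighbors precisely by the intersection property of $A_{i,j+1}$ in \eqref{non-empty-int}. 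Condition \ref{rel_chron_cond2} of Definition \ref{rel_chron} is immediate since in standard zero forcing each vertex performs at most one force, so no vertex is the target of two reversed forces, and condition \ref{rel_chron_cond3} follows since $\max A_{i,j}\ge 0$ always.

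For part (3), I would first record that $\pt(G,\mathcal{H})$ depends only on the underlying force set of $\mathcal{H}$, and then prove the one-sided bound $\pt(G,\rev(\mathcal{H}))\le \pt(G,\mathcal{H})$ for an arbitrary relaxed chronology $\mathcal{H}$. Let $\mathcal{G}$ be the greedy propagating family for the force set of $\mathcal{H}$; it is itself a relaxed chronology with $\ct(\mathcal{G})=\pt(G,\mathcal{H})$ and the same force set. Applying part (2) to $\mathcal{G}$, the reversal $\rev(\mathcal{G})$ is a relaxed chronology with the reversed force set and $\ct(\rev(\mathcal{G}))=\ct(\mathcal{G})$, since reversal merely relabels the time-steps. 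Because $\rev(\mathcal{G})$ and $\rev(\mathcal{H})$ share a force set, $\pt(G,\rev(\mathcal{H}))=\pt(G,\rev(\mathcal{G}))\le \ct(\rev(\mathcal{G}))=\pt(G,\mathcal{H})$, using the already noted inequality $\pt\le\ct$. Applying this bound to $\rev(\mathcal{F})$ in place of $\mathcal{H}$ and using that reversal is an involution yields the reverse inequality, hence equality.

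The main obstacle I anticipate is part (2): confirming that the reflected data really encodes $\rev(\mathcal{F})$ as a \emph{valid} relaxed chronology, i.e., checking the color-change-rule validity (condition 1) at each reversed time-step. This is exactly where the reflection $\sigma$ earns its keep---preserving both the block structure and the crucial nonempty-intersection property lets the validity of the forces of $\rev(\mathcal{F})$ be read off from the PIP condition for $\mathcal{F}$, rather than reanalyzed from scratch.
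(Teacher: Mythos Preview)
Your proposal is correct and follows essentially the same route as the paper. For parts~(1) and~(2) you use exactly the paper's device: reflect the active-time block partitions via $\sigma(k)=K-k$ (your $A'_{i,j}$ is the paper's $R_{i,j}$), verify the reflected data still witnesses the PIP, and invoke Theorem~\ref{parallel}; for part~(3) your argument via the propagating family $\mathcal G$ with $\ct(\mathcal G)=\pt(G,\mathcal F)$ and the involution $\rev\circ\rev=\mathrm{id}$ is precisely the self-contained proof the paper records in the Remark immediately following the lemma (the lemma's own proof instead cites \cite{proptime} for the one-sided inequality).
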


\begin{proof} For \eqref{cL43-2},
let $\mathcal{F}=\{F^{(k)}\}_{k=1}^K$. By Theorem \ref{parallel}, $\mathcal{F}$ induces a parallel increasing path cover $\mathcal{Q}$  with corresponding collection of block partitions $\{(A_{i,j})_{j=0}^{n_i-1}\}_{i=1}^m$ recording active time-steps. Define the collection of block partitions $\{(R_{i,j})_{j=0}^{n_i-1}\}_{i=1}^m$ by
\[N \in A_{i,j} \Longleftrightarrow K-N \in R_{i,n_i-j-1}.\]  
Then $\{(R_{i,j})_{j=0}^{n_i-1}\}_{i=1}^m$ is a collection of block partitions of $\{0,1,2,\dots,K\}$ that is also a witness that $\mathcal Q$ is a parallel increasing path cover of $G$. The vertices of $\term(\mathcal{F})$ correspond precisely to $\{R_{i,0}\}_{i=1}^m$, so Theorem \ref{parallel}  implies $\rev(\mathcal{F})$ is a relaxed chronology of forces for $\term(\mathcal{F})$ with $\ct(\rev(\mathcal{F}))=\ct(\mathcal{F})$. 

For \eqref{cL43-3}, note that although  \cite[Corollary 2.10]{proptime} is stated for sets of forces of minimum zero forcing sets, it is a consequence of Lemma 2.9, which does not assume minimality.  Thus  $\pt(G,\rev(\mathcal{F}))\le  \pt(G,\mathcal{F})$.  Equality follows by noting that $\rev(\rev(\mathcal{F}))=\mathcal{F}$.  \end{proof}

\begin{remark}
Note that relaxed chronologies can give a simpler proof that  $\pt(G,\rev(\mathcal{F}))\le  \pt(G,\mathcal{F})$ than that given in Lemma 2.9 and Corollary 2.10 in \cite{param} (the reader is invited to consult the proof of \cite[Lemma 2.9]{param}, where a new parameter $Q_t(\F)$ is introduced for a set $\F$ of forces).  Let $\mathcal{F}'$ be the relaxed chronology obtained by propagating the forces in $\mathcal{F}$. Since $\ct(\rev(\F'))=\ct(\F')$ and $\mathcal{F}$ and $\mathcal{F}'$ have the same underlying set of forces (as do $\rev(\mathcal{F})$ and $\rev(\mathcal{F}')$), we  see that
\[\pt(G,\rev(\mathcal{F}))=\pt(G,\rev(\mathcal{F}'))\leq \ct(\rev(\mathcal{F}'))= \ct(\mathcal{F}')=\pt(G,\mathcal{F}')=\pt(G,\mathcal{F}).\]
\end{remark}

We can take further  advantage of the structure of parallel increasing path covers and block partitions to obtain additional results by focusing on when vertices are active.  Of particular note, we utilize this additional information provided by PIPs to compare each of PSD propagation time and power propagation time to standard propagation time.  Specifically, we are able to show that if $G$ is a graph and $m\in \mathbb N$ such that $m\geq \Z(G)$, then
\[\pt_+(G,m) \leq \left \lceil \frac{\pt(G,m)}{2} \right \rceil \text{ and }\ppt(G,m) \leq \left \lceil \frac{\pt(G,m)}{2} \right \rceil\]

(see Theorems \ref{t:pt-ub-4-ptp} and \ref{powerproptime}).

\begin{definition}\label{def:timesteps}
Let $\mathcal{F}=\{F^{(k)}\}_{k=1}^K$ be a relaxed chronology of forces for a graph $G$ and some standard zero forcing set. For any $N\in \{0,1,\dots,K\}$, define the following partition of $V(G)$:
\bea
        V_{\mathcal{F}}^{N-}  &=&\{v\in V(G)\mid\act_{\mathcal{F}}(v)\subseteq \{0,1,\dots,N-1\}\},\\
V_{\mathcal{F}}^{N}  &=&\{v\in V(G)\mid N\in \act_{\mathcal{F}}(v)\}, \text{ and}\\
V_{\mathcal{F}}^{N+}  &=&\{v\in V(G)\mid\act_{\mathcal{F}}(v)\subseteq \{N+1,N+2,\dots,K\}\}.
\eea
Now for any $0\leq M\leq N \leq K$, define 
\begin{eqnarray*}
\begin{aligned}
V_{\mathcal{F}}^{[M,N]} & =(V_{\mathcal{F}}^{M}\cup V_{\mathcal{F}}^{M+})\cap (V_{\mathcal{F}}^{N-}\cup V_{\mathcal{F}}^{N}). \\
\end{aligned}
\end{eqnarray*}
\end{definition}
Note that $V_{\mathcal{F}}^{N}$ consists of the vertices active after time-step $N$, while $V_{\mathcal{F}}^{N-}$ and $V_{\mathcal{F}}^{N+}$ partition the remaining vertices into ones that have performed a force during some time-step $N'<N$ and those who do not become blue until some time-step $N'>N$, respectively. Likewise, $V^{[M,N]}_{\mathcal F}$ consists of the vertices which are active at some time-step $k$ with $M \leq k \leq N$.

\begin{lemma}\label{lemma:truncate}
Let $\mathcal{F}=\{F^{(k)}\}_{k=1}^K$ be a relaxed chronology of forces for a graph $G$ and some  standard zero forcing set $B$, and let $N\in \{0,1,\dots,K\}$.
\begin{enumerate}
    \item\label{461} $B$ is a zero forcing set of $H=G[V_{\mathcal{F}}^{[0,N]}]$ with $\pt(H,B)\leq N$. 
    \item\label{462} $B'=V_{\mathcal{F}}^{N}$ is a zero forcing set of $H'=G[V_{\mathcal{F}}^{[N,K]}]$ with $\pt(H',B')\leq K-N$.
\end{enumerate}
\end{lemma}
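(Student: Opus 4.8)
The plan is to realize each of the two subgraphs as the support of a suitably truncated copy of $\F$, and then to invoke the principle that if $\mathcal G$ is a relaxed chronology of a set $B_0$ on a graph $H_0$ with completion time $L$ (that is, a family of valid CCR-$\Z$ force-sets coloring $V(H_0)$ blue in $L$ steps as in Definition \ref{rel_chron}), then $B_0$ is a standard zero forcing set of $H_0$ with $\pt(H_0,B_0)\le L$. This principle holds because the propagation process performs at each step a superset of the forces available to $\mathcal G$, so a routine induction gives $B_0^{[k]}\supseteq E_{\mathcal G}^{[k]}$ and hence $B_0^{[L]}=V(H_0)$. Using Theorem \ref{parallel} I first record the structural facts needed: writing $A_{i,j}=\act_{\F}(v_{i,j})$, the blocks $A_{i,j}$ form, along each chain, an increasing partition of $\{0,1,\dots,K\}$ into consecutive intervals. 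Consequently each chain meets $V_{\F}^{N}$ in exactly one vertex, with its predecessors in $V_{\F}^{N-}$ and its successors in $V_{\F}^{N+}$; moreover $V_{\F}^{[0,N]}=E_{\F}^{[N]}$ and $V_{\F}^{[N,K]}=V_{\F}^{N}\cup V_{\F}^{N+}$.

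For part \eqref{461}, I truncate: let $\wt{\F}=\{F^{(k)}\}_{k=1}^{N}$ and $H=G[V_{\F}^{[0,N]}]=G[E_{\F}^{[N]}]$. For $k\le N$ each force $v\to u\in F^{(k)}$ has $u\in E_{\F}^{(k)}\subseteq E_{\F}^{[N]}$ and $v\in E_{\F}^{[k-1]}\subseteq E_{\F}^{[N]}$, so both endpoints lie in $H$. Validity is inherited, since $H$ is induced gives $N_H(v)\subseteq N_G(v)$, so if $u$ is the unique white neighbor of $v$ in $G$ under the blue set $E_{\F}^{[k-1]}\subseteq V(H)$, it remains the unique white neighbor in $H$. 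As $E_{\wt{\F}}^{[N]}=E_{\F}^{[N]}=V(H)$, condition \eqref{rel_chron_cond3} holds and $\wt{\F}$ is a relaxed chronology for $B$ on $H$; the principle yields $\pt(H,B)\le \ct(\wt{\F})=N$.

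For part \eqref{462}, I instead keep the tail: let $\F'=\{F^{(N+k)}\}_{k=1}^{K-N}$, $B'=V_{\F}^{N}$, and $H'=G[V_{\F}^{[N,K]}]$. A force $v\to u\in F^{(N+k)}$ colors $u$ blue at time $N+k>N$, so $u\in V_{\F}^{N+}$, while $v$ is active at time-step $N+k-1\ge N$ (the step before it forces), so $v\in V_{\F}^{N}\cup V_{\F}^{N+}$; thus both endpoints lie in $H'$, and $E_{\F'}^{[K-N]}=V_{\F}^{N}\cup V_{\F}^{N+}=V(H')$ gives condition \eqref{rel_chron_cond3}. The crux is validity. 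A direct computation shows that the blue set of $\F'$ after step $k-1$ satisfies $E_{\F'}^{[k-1]}=E_{\F}^{[N+k-1]}\cap V(H')$; that is, it differs from the $G$-blue set $E_{\F}^{[N+k-1]}$ only by the already-finished vertices of $V_{\F}^{N-}$, which are deleted in passing to $H'$. Since those deleted vertices were blue in $\F$, deleting them removes no white neighbor of any $v$, so the white neighbors of $v$ in $H'$ under $E_{\F'}^{[k-1]}$ form a subset of the white neighbors of $v$ in $G$ under $E_{\F}^{[N+k-1]}$, namely $\{u\}$; as $u\in V_{\F}^{N+}\subseteq V(H')$, the force $v\to u$ is valid in $H'$. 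Hence $\F'$ is a relaxed chronology for $B'$ on $H'$, and the principle yields $\pt(H',B')\le \ct(\F')=K-N$.

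I expect the validity check in part \eqref{462} to be the main obstacle, because deleting $V_{\F}^{N-}$ simultaneously shrinks the graph and the blue set; the key realization that makes this harmless is that only blue vertices are removed, so the unique-white-neighbor condition can only be preserved, never broken.
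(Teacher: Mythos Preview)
Your proof is correct, and it takes a genuinely different route from the paper's. The paper argues entirely through the PIP correspondence of Theorem~\ref{parallel}: it converts $\F$ to a collection of block partitions $\{(A_{i,j})\}$, then for each of $H$ and $H'$ constructs a truncated path cover together with a modified collection of block partitions (by deleting $\{N{+}1,\dots,K\}$ or $\{0,\dots,N{-}1\}$ and shifting), checks that the intersection condition~\eqref{non-empty-int} survives, and converts back via Theorem~\ref{parallel} to obtain the zero forcing set and the propagation-time bound. Your argument bypasses this round trip: you keep working with $\F$ itself, cutting it to $\{F^{(k)}\}_{k=1}^N$ for part~\eqref{461} and shifting to $\{F^{(N+k)}\}_{k=1}^{K-N}$ for part~\eqref{462}, and verify directly that these are relaxed chronologies on the relevant induced subgraphs. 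The key nontrivial point is the validity check in~\eqref{462}, which you handle cleanly by observing that the vertices discarded in passing from $G$ to $H'$ are exactly $V_{\F}^{N-}$, all of which are already blue in $\F$ at every relevant step, so no white neighbor is lost. Your approach is more elementary and shows that the result does not actually require the block-partition intersection property~\eqref{non-empty-int}; the paper's approach, on the other hand, keeps the argument uniform with the PIP framework it is building and makes the ``halving'' picture used in Theorem~\ref{t:pt-ub-4-ptp} more visually apparent. The initial appeal to Theorem~\ref{parallel} in your write-up is in fact dispensable: the identities $V_{\F}^{[0,N]}=E_{\F}^{[N]}$ and $V_{\F}^{[N,K]}=V_{\F}^{N}\cup V_{\F}^{N+}$ follow straight from Definition~\ref{def:timesteps}, and the fact that each chain meets $V_{\F}^{N}$ in exactly one vertex is immediate from Definition~\ref{def:F-active-chain}.
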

\begin{proof}
By Theorem \ref{parallel}, $\mathcal{F}$ induces a PIP $\mathcal{Q}$ with witness $\{(A_{i,j})_{j=0}^{n_i-1}\}_{i=1}^m$, where $A_{i,j}=\act_{\mathcal{F}}(v_{i,j})$. Denote the vertices in $V_{\mathcal{F}}^N$ as $\{v_{i,j_i}\}_{i=1}^m$, and note that these vertices correspond to the members of the block partitions $A_{i,j_i}$ containing $N$. 

For \eqref{461}, form a path cover $\mathcal{Q}'$ of $H$ by restricting $\mathcal{Q}$ to the vertices $\{v_{i,j}\}_{j=0}^{j_i}\,{ }_{i=1}^m$. The collection of block partitions $\{(A_{i,j}')_{j=0}^{j_i}\}_{i=1}^m$ formed by starting with $\{(A_{i,j})_{j=0}^{j_i}\}_{i=1}^m$ and removing all elements in $\{N+1,N+2,\dots,K\}$ produces a collection of block partitions of $\{0,1,\dots,N\}$. Notice that $A_{i_1,j_1}'\cap A_{i_2,j_2}'\neq \emptyset$ precisely when $A_{i_1,j_1}\cap A_{i_2,j_2}\neq \emptyset$. Hence, $\{(A_{i,j}')_{j=0}^{j_i}\}_{i=1}^m$ is a witness that $\mathcal{Q}'$ is a parallel increasing path cover of $H$. The conclusions in (1) then follow from Theorem \ref{parallel}.

For \eqref{462}, form $\mathcal{Q}'$ by restricting $\mathcal{Q}$ to the vertices $\{v_{i,j}\}_{j=j_i}^{n_i-1}\,{}_{i=1}^m$ 
in $H'$. Now, start with $\{(A_{i,j})_{j=j_i}^{n_i-1}\}_{i=1,}^m$, remove all elements in $\{0,1,\dots,N-1\}$, and subtract $N$ from all remaining elements in each partition to obtain a collection of block partitions of $\{0,1,\dots,K-N\}$. By similar reasoning as given above, this is a witness that $\mathcal{Q}'$ is a parallel increasing path cover of $H'$, and Theorem \ref{parallel} implies (2).
\end{proof}

\begin{corollary}\label{cor:interval}
Let $\mathcal{F}=\{F^{(k)}\}_{k=1}^K$ be a relaxed chronology of forces for a graph $G$ for some standard zero forcing set. For any $0\leq M<N \leq K$, $V_{\mathcal{F}}^{M}$ and $V_{\mathcal{F}}^{N}$ are zero forcing sets of $G[V_{\mathcal{F}}^{[M,N]}]$, and both sets have propagation time at most $N-M$.
\end{corollary}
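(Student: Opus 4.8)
The plan is to derive this corollary directly from Lemma \ref{lemma:truncate} by a two-step restriction argument, mirroring the structure of the lemma itself. The key observation is that the interval $[M,N]$ can be obtained by first truncating from above at $N$ and then truncating from below at $M$, so that the two parts of Lemma \ref{lemma:truncate} compose. Concretely, let $H=G[V_{\mathcal{F}}^{[0,N]}]$. By part \eqref{461} of Lemma \ref{lemma:truncate} applied to $\mathcal{F}$ on $G$, the restriction of $\mathcal{F}$ to the first $N$ time-steps is a relaxed chronology $\mathcal{F}'$ on $H$ (with completion time $N$), and its active-time structure is inherited: for vertices of $H$, the $\mathcal{F}'$-active times agree with the $\mathcal{F}$-active times intersected with $\{0,1,\dots,N\}$.

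First I would verify that the interval sets are consistent across this truncation, namely that $V_{\mathcal{F}'}^{[M,N]} = V_{\mathcal{F}}^{[M,N]}$ and that $V_{\mathcal{F}'}^{M}=V_{\mathcal{F}}^{M}$ and $V_{\mathcal{F}'}^{N}=V_{\mathcal{F}}^{N}$ as subsets of $V(H)$. This is where the active-time bookkeeping must be done carefully, but it follows from the explicit witness constructed in the proof of Lemma \ref{lemma:truncate}: the block partitions $(A_{i,j}')$ for $H$ are obtained from $(A_{i,j})$ by deleting the elements exceeding $N$, so membership of $M$ and $N$ in the relevant parts is preserved, and the notion of ``active at time-step $k$ for $M\le k\le N$'' is unchanged. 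Then I would apply part \eqref{462} of Lemma \ref{lemma:truncate}, but now to the relaxed chronology $\mathcal{F}'$ on $H$ with the threshold $M$: this yields that $V_{\mathcal{F}'}^{M}=V_{\mathcal{F}}^{M}$ is a zero forcing set of $H[V_{\mathcal{F}'}^{[M,N]}]=G[V_{\mathcal{F}}^{[M,N]}]$ with propagation time at most $N-M$ (since the completion time of $\mathcal{F}'$ is $N$, and $N-M$ is the relevant bound).

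For the claim about $V_{\mathcal{F}}^{N}$, I would use the reversal machinery from Lemma \ref{lem:reverse} together with the interval symmetry, or more directly observe that $V_{\mathcal{F}}^N$ being a zero forcing set of the same induced subgraph follows by the symmetric construction: restricting from below at $M$ first via part \eqref{462}, then from above at $N$ via part \eqref{461}. Either route gives that $V_{\mathcal{F}}^{N}$ is a zero forcing set of $G[V_{\mathcal{F}}^{[M,N]}]$ with the same propagation bound $N-M$. The cleanest presentation is probably to treat both sets in parallel: apply part \eqref{461} at threshold $N$ to pass to $H$, then apply both parts of Lemma \ref{lemma:truncate} to $H$ at threshold $M$ (part \eqref{461} would instead handle a lower interval, so I would invoke part \eqref{462} at $M$ for $V_{\mathcal{F}}^M$ and a reversed/symmetric version for $V_{\mathcal{F}}^N$).

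The main obstacle I anticipate is purely notational rather than conceptual: ensuring that the active-time sets and the interval subgraphs genuinely coincide after the double restriction, so that the second application of the lemma is legitimately an application to the \emph{same} induced subgraph $G[V_{\mathcal{F}}^{[M,N]}]$ that appears in the statement. Once that identification is pinned down via the witness block partitions, the propagation-time bound $N-M$ is immediate from the completion times, since each truncation contributes the expected range of time-steps. I expect no genuinely hard step here — the corollary is essentially the composition of the two halves of Lemma \ref{lemma:truncate}, and the work lies in confirming the set identities.
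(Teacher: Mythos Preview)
Your approach for $V_{\mathcal{F}}^{M}$ is correct and matches the paper's proof exactly: apply part~\eqref{461} of Lemma~\ref{lemma:truncate} at threshold $N$, then part~\eqref{462} at threshold $M$ on the resulting relaxed chronology. Your bookkeeping concern about the active-time sets lining up is legitimate but, as you say, resolved by the explicit witness in the proof of Lemma~\ref{lemma:truncate}.

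For $V_{\mathcal{F}}^{N}$, your first suggestion (use Lemma~\ref{lem:reverse}) is exactly what the paper does: once you have a relaxed chronology $\mathcal{F}'$ on $G[V_{\mathcal{F}}^{[M,N]}]$ with zero forcing set $V_{\mathcal{F}}^{M}$, observe that $V_{\mathcal{F}}^{N}=\term(\mathcal{F}')$, and then Lemma~\ref{lem:reverse} gives both that it is a zero forcing set and that its propagation time equals that of $\mathcal{F}'$, hence is at most $N-M$. However, your proposed alternative ``symmetric construction'' (apply part~\eqref{462} at $M$ first, then part~\eqref{461} at $N$) does not yield $V_{\mathcal{F}}^{N}$: part~\eqref{461} always asserts that the \emph{original} zero forcing set of the relaxed chronology at hand is a zero forcing set of the truncated graph, so composing in either order you end up with a statement about $V_{\mathcal{F}}^{M}$, not $V_{\mathcal{F}}^{N}$. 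The only way to reach $V_{\mathcal{F}}^{N}$ from Lemma~\ref{lemma:truncate} alone is to first pass to $\rev(\mathcal{F})$, which already invokes Lemma~\ref{lem:reverse}. So drop the ``either route'' claim and commit to the reversal argument.
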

\begin{proof}
The results for $V_{\mathcal{F}}^{M}$ follow from first applying part (1) of Lemma \ref{lemma:truncate} with $N$ and then applying part (2) with $M$. Lemma \ref{lem:reverse} then implies the corresponding results for $V_{\mathcal{F}}^{N}$, as this is $\term(\mathcal{F}')$ for the relaxed chronology $\mathcal{F}'=\{F^{(k+M)}\}_{k=1}^{N-M}$  on the graph $G[V_{\mathcal{F}}^{[M,N]}]$ with zero forcing set $V_{\mathcal{F}}^M$.
\end{proof}

Using these techniques we also obtain results for positive semidefinite zero forcing and propagation time.

\begin{lemma}\label{lemma:cutset}
Let $\mathcal{F}=\{F^{(k)}\}_{k=1}^K$ be a relaxed chronology of forces for a graph $G$ and some  standard zero forcing set. Then no vertices in $V_{\mathcal{F}}^{N-}$ are adjacent to vertices in $V_{\mathcal{F}}^{N+}$. In particular, if both sets are nonempty, then $V_{\mathcal{F}}^{N}$ is a vertex cut of $G$.
\end{lemma}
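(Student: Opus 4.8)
The statement has two parts: first, that no vertex of $V_{\mathcal{F}}^{N-}$ is adjacent to any vertex of $V_{\mathcal{F}}^{N+}$, and second, that $V_{\mathcal{F}}^{N}$ is a vertex cut when both complementary sets are nonempty. The plan is to prove the adjacency claim directly using the PIP structure, and then deduce the cut statement as an immediate consequence.

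For the first part, I would invoke Theorem \ref{parallel} to pass from $\mathcal{F}$ to the associated parallel increasing path cover $\mathcal{Q}$ with witness $\{(A_{i,j})_{j=0}^{n_i-1}\}_{i=1}^m$, where $A_{i,j}=\act_{\mathcal{F}}(v_{i,j})$. Suppose for contradiction that $v_{i_1,j_1}\in V_{\mathcal{F}}^{N-}$ is adjacent to $v_{i_2,j_2}\in V_{\mathcal{F}}^{N+}$. By definition of these sets, $A_{i_1,j_1}\subseteq\{0,\dots,N-1\}$ and $A_{i_2,j_2}\subseteq\{N+1,\dots,K\}$, so these two blocks are disjoint. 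I must handle two cases depending on whether the two vertices lie on the same path. If $i_1=i_2$, then adjacency on the path forces $|j_1-j_2|=1$, but then one of the two active-time blocks would contain the minimum of the other's interval (consecutive blocks in a block partition are adjacent integer intervals with no gap), contradicting the gap of at least two integers forced by the strict separation $A_{i_1,j_1}<\{N\}<A_{i_2,j_2}$. If $i_1\neq i_2$, then the PIP property \eqref{non-empty-int} forces $A_{i_1,j_1}\cap A_{i_2,j_2}\neq\emptyset$, directly contradicting disjointness. Either way we reach a contradiction, establishing the adjacency claim.

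For the second part, recall that $V_{\mathcal{F}}^{N-}$, $V_{\mathcal{F}}^{N}$, and $V_{\mathcal{F}}^{N+}$ partition $V(G)$. Assuming both $V_{\mathcal{F}}^{N-}$ and $V_{\mathcal{F}}^{N+}$ are nonempty, pick $u\in V_{\mathcal{F}}^{N-}$ and $w\in V_{\mathcal{F}}^{N+}$. By the first part, any $uw$-path in $G$ must at some point leave $V_{\mathcal{F}}^{N-}$, and since there are no direct edges between the two outer sets, such a transition can only pass through $V_{\mathcal{F}}^{N}$; hence every $uw$-path uses a vertex of $V_{\mathcal{F}}^{N}$. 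Therefore $u$ and $w$ lie in different components of $G-V_{\mathcal{F}}^{N}$, so $G-V_{\mathcal{F}}^{N}$ has strictly more components than the component(s) of $G$ containing $u$ and $w$, making $V_{\mathcal{F}}^{N}$ a vertex cut.

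\textbf{Main obstacle.} The only delicate point is the same-path case ($i_1=i_2$) in the first part: one must verify that two adjacent vertices on a single path cannot have their active-time blocks separated by the singleton $\{N\}$. This hinges on the fact that within a single block partition the consecutive blocks tile $\{0,1,\dots,K\}$ with no gaps, so adjacent blocks on a path are consecutive integer intervals and thus cannot be separated by an intervening value $N$. I expect the cleanest route is simply to note that the round function advances by exactly one time-step along a forcing chain, so the block of $v_{i,j+1}$ begins immediately after the block of $v_{i,j}$ ends, leaving no room for $N$ to fall strictly between them while also lying in neither.
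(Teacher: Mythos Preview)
Your proof is correct and follows essentially the same route as the paper's: invoke Theorem~\ref{parallel} to obtain the PIP with witness block partitions, then rule out an edge between $V_{\mathcal{F}}^{N-}$ and $V_{\mathcal{F}}^{N+}$ by splitting into the same-path case (consecutive blocks satisfy $\max(A_{i,j})+1=\min(A_{i,j+1})$, leaving no room for $N$ to be skipped) and the different-path case (the PIP intersection condition \eqref{non-empty-int}). One small wording slip: the phrase ``one of the two active-time blocks would contain the minimum of the other's interval'' is not literally true since the blocks are disjoint---what you want, and what you correctly state later, is that consecutive blocks abut with no gap.
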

\begin{proof}
Using Theorem \ref{parallel}, $\mathcal{F}$ induces a parallel increasing path cover $\mathcal{Q}$ with corresponding collection of block partitions $\{(A_{i,j})_{j=0}^{n_i-1}\}_{i=1}^m$. Consider $v_{i_1,j_1}\in V_{\mathcal{F}}^{N-}$ and $v_{i_2,j_2}\in V_{\mathcal{F}}^{N+}$ with corresponding sets of active time-steps $A_{i_1,j_1}$ and $A_{i_2,j_2}$. By Definition \ref{def:PIP}, if $v_{i_1,j_1} v_{i_2,j_2}\in E(G)$ for $i_1 \not = i_2$, then it must be that $A_{i_1,j_1} \cap A_{i_2,j_2} \not = \emptyset$, and if $v_{i_1,j_1}v_{i_1,j_2}\in E(G)$ with $i_1=i_2$ and $j_1 < j_2$, it must be that $\max(A_{i_1,j_1})+1=\min(A_{i_1,j_2})$. Neither of these situations can occur since $A_{i_1,j_1}\subseteq \{0,1,\dots,N-1\}$ and $A_{i_2,j_2}\subseteq \{N+1,N+2,\dots,K\}$.
\end{proof}

\begin{theorem}\label{t:pt-ub-4-ptp}
Let $G$ be a graph and $m\in \mathbb N$ such that $m\geq \Z(G)$. Then
\[\pt_+(G,m) \leq \left \lceil \frac{\pt(G,m)}{2} \right \rceil.\]
\end{theorem}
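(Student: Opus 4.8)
The plan is to fix a $k$-efficient standard zero forcing set $B$ with $|B|=m$ and $\pt(G,B)=\pt(G,m)$, let $t=\pt(G,m)$, and propagate the forces to obtain a propagating family $\mathcal{F}=\{F^{(k)}\}_{k=1}^t$ of forces for $B$ on $G$. Writing $A_{i,j}=\act_{\mathcal{F}}(v_{i,j})$ for the witnessing block partitions from Theorem \ref{parallel}, I would choose the ``middle'' time-step $N=\lceil t/2 \rceil$ and use the set $B'=V_{\mathcal{F}}^{N}$ as my candidate PSD forcing set. The intuition is that the vertices active at a single time-step form a cut (Lemma \ref{lemma:cutset}) that splits the remaining vertices into an ``early'' half $V_{\mathcal{F}}^{N-}$ and a ``late'' half $V_{\mathcal{F}}^{N+}$, and I can PSD-force into both halves simultaneously, running two standard zero forcing processes in parallel — one forward on the late half, one backward (via the reversal) on the early half.

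The key steps, in order: First I would observe that $V_{\mathcal{F}}^{N}$ has at most $m$ elements, since each of the $m$ block partitions contributes exactly one part containing $N$, so $|B'|\le m$ and thus $B'$ is a valid candidate for a set of size $m$ (this is where I need $m\ge \Z(G)$ so that size-$m$ forcing sets and $\pt(G,m)$ are defined). Next, by Corollary \ref{cor:interval}, $V_{\mathcal{F}}^{N}$ is a standard zero forcing set of $G[V_{\mathcal{F}}^{[N,K]}]$ with propagation time at most $t-N$, and dually $V_{\mathcal{F}}^{N}$ is a standard zero forcing set of $G[V_{\mathcal{F}}^{[0,N]}]$ with propagation time at most $N$ (using the reversal as in the proof of Corollary \ref{cor:interval}). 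Then comes the heart of the argument: because $V_{\mathcal{F}}^{N}$ is a cut by Lemma \ref{lemma:cutset}, with no edges between $V_{\mathcal{F}}^{N-}$ and $V_{\mathcal{F}}^{N+}$, the components of $G-B'$ lie entirely within one side or the other. This lets any standard zero force performed by a vertex of $B'$ into a given component be realized as a legitimate PSD force, since the forced vertex is the unique neighbor of the forcing vertex in that component. So the two standard processes — forward into the late side and backward into the early side — can be executed simultaneously as a single PSD process, and the PSD propagation time is bounded by the larger of the two standard propagation times.

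Putting the bounds together, the PSD process on $G$ starting from $B'$ completes in at most $\max\{N,\, t-N\}$ time-steps. Choosing $N=\lceil t/2\rceil$ makes this maximum equal to $\lceil t/2 \rceil$, giving $\pt_+(G,m)\le \pt_+(G,B')\le \lceil \pt(G,m)/2\rceil$, which is exactly the claim.

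I expect the main obstacle to be the rigorous justification that the two standard zero forcing processes interleave into a single valid \emph{PSD} process at each time-step. The subtlety is that PSD forcing requires, for a blue vertex $u$ and a component $C$ of $G-B^{[k-1]}$, that $N_G(u)\cap V(C)$ be a single vertex; I must verify that the standard forces provided by Corollary \ref{cor:interval} on each induced subgraph actually satisfy this component-relative uniqueness condition in $G$ itself, not merely in the subgraph. The cut property from Lemma \ref{lemma:cutset} is what makes this work — it guarantees each component of $G-B'$ sits inside a single induced subgraph $G[V_{\mathcal{F}}^{[0,N]}]$ or $G[V_{\mathcal{F}}^{[N,K]}]$, so a standard force valid in that subgraph is a PSD force valid in $G$ — but carefully tracking that the blue set stays consistent across both halves as the parallel process unfolds is the delicate bookkeeping, and is where I would focus the most care.
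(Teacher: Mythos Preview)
Your proposal is correct and follows essentially the same approach as the paper: choose an $m$-efficient zero forcing set $B$, propagate to get $\mathcal{F}$, take $B'=V_{\mathcal{F}}^{N}$ with $N=\lceil \pt(G,m)/2\rceil$, invoke Lemma \ref{lemma:cutset} to separate the two halves, and apply Corollary \ref{cor:interval} to each half so that the two standard processes run independently as a single PSD process with time $\max\{N,t-N\}=\lceil t/2\rceil$. One small sharpening: your own observation that each of the $m$ block partitions contributes exactly one part containing $N$ gives $|B'|=m$ exactly, not merely $|B'|\le m$, which is what $\pt_+(G,m)$ requires.
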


\begin{proof}
Let $B$ be a standard zero forcing set of $G$ of size $m \geq \Z(G)$ such that $\pt(G,B)=\pt(G,m)$, with corresponding propagating set of forces $\mathcal{F}=\{F^{(k)}\}_{k=1}^{\pt(G,m)}$ (where $F^{(k)}$ denotes the set of forces during time-step $k$). Using Theorem \ref{parallel}, this induces a PIP $\mathcal{Q}=\{Q_i\}_{i=1}^m$, where each $Q_i=\{v_{i,j}\}_{j=0}^{n_i-1}$ corresponds to a block partition $(A_{i,j})_{j=1}^{n_i-1}$ of $\{0,1,2,\dots ,\pt(G,m)\}$. Now set $N=\left \lceil \frac{\pt(G,m)}{2} \right \rceil$ and let $B'=V_{\mathcal{F}}^N$.  By the definition of the PSD color change rule and Lemma \ref{lemma:cutset}, PSD forcing in $H_1=G[V_{\mathcal{F}}^{0,N}]$ and $H_2=G[V_{\mathcal{F}}^{N,K}]$ with $B'$ blue will occur independently and simultaneously. Using Corollary \ref{cor:interval} on $H_1$ and $H_2$, we conclude that \[\pt_+(G,B')\leq \max\{\pt(H_1,B),\pt(H_2,B)\}\leq \max\{\pt(G,B)-N,N\}=\left \lceil \frac{\pt(G,m)}{2} \right \rceil.\qedhere\]
\end{proof}

\begin{remark}
In the preceding proof, $B'$ is chosen to be $V_{\mathcal{F}}^N$ for $N=\left\lceil\frac{\pt(G,m)}{2}\right\rceil$, but any choice of $N$ such that $0 \leq N \leq \pt(G,m)$ will yield a PSD forcing set of $G$ of size $m$ with propagation time bounded above by $\max\{\pt(G,m)-N,N\}$. In particular, when $\pt(G,m)$ is odd, the choice of $N=\left\lfloor\frac{\pt(G,m)}{2}\right\rfloor$ also establishes the bound in Theorem \ref{t:pt-ub-4-ptp}.  

\end{remark}

\begin{corollary}\label{throt}
For any graph $G$,
\[\thr_+(G) \leq \min_{m \geq \Z(G)} \Big{(}m + \left \lceil \frac{\pt(G,m)}{2} \right \rceil \Big{)}.\]
\end{corollary}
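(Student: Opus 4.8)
\textbf{Proof plan for Corollary \ref{throt}.}

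The plan is to combine the definition of PSD throttling with the upper bound on PSD propagation time established in Theorem \ref{t:pt-ub-4-ptp}. Recall that $\thr_+(G)$ is defined as the minimum of $\abs{B}+\pt_+(G,B)$ taken over all PSD forcing sets $B$ of $G$. The idea is that for each value of $m\geq \Z(G)$, Theorem \ref{t:pt-ub-4-ptp} produces a PSD forcing set whose size and propagation time together give an admissible term in the throttling minimum.

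First I would fix an arbitrary $m\in\mathbb{N}$ with $m\geq\Z(G)$. Since $m\geq\Z(G)$, there exists a standard zero forcing set of $G$ of size $m$ (take a minimum zero forcing set and enlarge it), so $\pt(G,m)$ is well-defined and the hypotheses of Theorem \ref{t:pt-ub-4-ptp} are met. Examining the proof of that theorem, the set $B'=V_{\mathcal{F}}^N$ constructed there is a PSD forcing set of $G$ satisfying $\abs{B'}=m$ (it selects exactly one vertex from each of the $m$ forcing chains, namely the vertex active at time-step $N$) and $\pt_+(G,B')\leq\lceil\pt(G,m)/2\rceil$. Therefore $B'$ is an admissible choice in the definition of $\thr_+(G)$, which yields
\[
\thr_+(G)\leq \abs{B'}+\pt_+(G,B')\leq m+\left\lceil\frac{\pt(G,m)}{2}\right\rceil.
\]

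Finally, since the preceding inequality holds for every $m\geq\Z(G)$, I would take the minimum over all such $m$ to obtain
\[
\thr_+(G)\leq \min_{m\geq\Z(G)}\Big(m+\left\lceil\frac{\pt(G,m)}{2}\right\rceil\Big),
\]
which is the desired bound. The only point requiring care is the verification that the PSD forcing set $B'$ from Theorem \ref{t:pt-ub-4-ptp} has cardinality exactly $m$ rather than merely at most $m$; this follows because $V_{\mathcal{F}}^N$ contains precisely one vertex per block partition (equivalently, one active vertex per forcing chain at each time-step), so no genuine obstacle arises and the result is essentially immediate from Theorem \ref{t:pt-ub-4-ptp}.
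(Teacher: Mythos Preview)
Your proposal is correct and takes essentially the same approach as the paper, which states the corollary without proof as an immediate consequence of Theorem \ref{t:pt-ub-4-ptp}. Your extra verification that $|B'|=m$ is not strictly needed, since the statement of Theorem \ref{t:pt-ub-4-ptp} already bounds $\pt_+(G,m)$ (the minimum over PSD forcing sets of size exactly $m$), so one can argue directly: for each $m\ge\Z(G)$ there exists a PSD forcing set $B$ with $|B|=m$ and $\pt_+(G,B)=\pt_+(G,m)\le\lceil\pt(G,m)/2\rceil$, whence $\thr_+(G)\le m+\lceil\pt(G,m)/2\rceil$.
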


\begin{corollary}\label{Z=Z_+}
For any graph $G$ such that $\Z_+(G)=\Z(G)$,
\[\pt_+(G) \leq \left\lceil\frac{\pt(G)}{2}\right\rceil.\]
\end{corollary}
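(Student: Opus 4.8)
The plan is to derive Corollary \ref{Z=Z_+} as a direct specialization of Theorem \ref{t:pt-ub-4-ptp}. The key observation is that the hypothesis $\Z_+(G)=\Z(G)$ lets us relate the minimum-size forcing numbers across both variants, so the inequality $\pt_+(G,m)\leq\lceil\pt(G,m)/2\rceil$ at the specific value $m=\Z(G)$ collapses into the desired statement about the (minimum-size) propagation times $\pt_+(G)$ and $\pt(G)$. Essentially, $\pt(G)=\pt(G,\Z(G))$ by definition, and $\pt_+(G)=\pt_+(G,\Z_+(G))=\pt_+(G,\Z(G))$ under the hypothesis, so the two parameters being compared are exactly the $m$-propagation times at $m=\Z(G)$.

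First I would set $m=\Z(G)$, which is a legitimate choice since $m\geq\Z(G)$ holds with equality, so Theorem \ref{t:pt-ub-4-ptp} applies and yields $\pt_+(G,\Z(G))\leq\lceil\pt(G,\Z(G))/2\rceil$. Next I would unfold the definitions: by definition $\pt(G,\Z(G))=\pt(G)$, since $\pt(G)$ is the minimum of $\pt(G,B)$ over standard zero forcing sets of size $\Z(G)$, and this is exactly $\pt(G,\Z(G))$. Then I would use the hypothesis to rewrite the left-hand side: since $\Z_+(G)=\Z(G)$, the PSD propagation time $\pt_+(G)$, defined as the minimum of $\pt_+(G,B)$ over minimum PSD forcing sets (those of size $\Z_+(G)=\Z(G)$), equals $\pt_+(G,\Z(G))$. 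Substituting both identities into the inequality from Theorem \ref{t:pt-ub-4-ptp} gives $\pt_+(G)\leq\lceil\pt(G)/2\rceil$.

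The one point requiring a moment of care is confirming that $\pt_+(G,\Z(G))$ as it appears in Theorem \ref{t:pt-ub-4-ptp} (the minimum PSD propagation time over PSD forcing sets of size exactly $\Z(G)$) genuinely coincides with $\pt_+(G)$ (the minimum over \emph{minimum} PSD forcing sets). This identification is precisely where the hypothesis $\Z_+(G)=\Z(G)$ enters: a PSD forcing set of size $\Z(G)=\Z_+(G)$ is a minimum PSD forcing set, so the two minimizations are over the same collection of sets and hence give the same value. I do not anticipate a genuine obstacle here; the corollary is a clean substitution, and the only subtlety is matching the indexing conventions of the $k$-propagation-time parameters to the plain propagation-time parameters, which the hypothesis resolves exactly.
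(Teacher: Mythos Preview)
Your proposal is correct and is exactly the intended derivation: the paper states Corollary~\ref{Z=Z_+} without proof as an immediate consequence of Theorem~\ref{t:pt-ub-4-ptp}, and your argument---setting $m=\Z(G)$ and using $\Z_+(G)=\Z(G)$ to identify $\pt_+(G,\Z(G))$ with $\pt_+(G)$---is precisely how that specialization goes through.
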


\begin{remark}In \cite{warnberg}, it was shown that for $t \in \{1,2,3\}$, $\Z_+(P_s \cartProd P_t)=\Z(P_s \cartProd P_t)=\min\{s,t\}$, where $P_s \cartProd P_t$ denotes the Cartesian product of the path graphs $P_s$ and $P_t$, i.e., the graph with vertex set $V(P_s)\times V(P_t)$ such that $(u,v)$ is adjacent to $(u',v')$ if and only if (1) $u=u'$ and $vv'\in E(P_t)$, or (2) $v=v'$ and $uu'\in E(P_s)$.  It was also shown that $\pt(P_s \cartProd P_t)=\max\{s,t\}-1$, and $\pt_+(P_s \cartProd P_t)=\left\lceil\frac{\max\{s,t\}-1}{2}\right\rceil$.  Thus the infinite class of graphs $P_s \cartProd P_t$ establishes that the bound in Corollary \ref{Z=Z_+} is sharp.
\end{remark}   

Note that in the proof of Theorem \ref{t:pt-ub-4-ptp}, we started with a relaxed chronology $\mathcal{F}=\{F^{(k)}\}_{k=1}^K$ and chose a single time-step of vertices $V_{\mathcal{F}}^N$ as the PSD forcing set. One can generalize this approach and choose multiple time-steps. For example, choosing $B=V_{\mathcal{F}}^{\lceil K/4\rceil}\cup V_{\mathcal{F}}^{\lfloor 3K/4\rfloor}$ would construct a PSD forcing set with $\pt_+(G,B)\leq  \left\lceil \frac{K}{4}\right\rceil$. In general, this technique multiplies the size of the zero forcing set by some positive integer $\ell$, and reduces propagation time by approximately a factor of $2\ell$.

Letting $\ppt(G,m)$ denote the power propagation time for sets of size $m$, we can also establish power domination versions of the results above. While one can do this by directly generalizing the techniques of this section, we will see that the results of the next section simplify arguments significantly. Hence, we present the proof of the following theorem in Appendix \ref{appendix:power}.

\begin{theorem}\label{powerproptime}
Let $G$ be a graph and $m\in \mathbb{N}$ such that $m\geq \Z(G)$. Then \[\ppt(G,m)\leq \lc \frac{\pt(G,m)}{2} \rc.\]
\end{theorem}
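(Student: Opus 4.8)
The plan is to rerun the argument of Theorem~\ref{t:pt-ub-4-ptp} almost verbatim, replacing the simultaneous PSD forcing on the two sides of the cut with a single domination step followed by standard zero forcing. First I would choose a standard zero forcing set $B$ of size $m$ with $\pt(G,B)=\pt(G,m)$, take its propagating family of forces $\mathcal{F}=\{F^{(k)}\}_{k=1}^{K}$ with $K=\pt(G,m)$, and invoke Theorem~\ref{parallel} to pass to the induced PIP whose block partitions record active times. I would set $N=\lc \frac{\pt(G,m)}{2}\rc$ and let $B'=V_{\mathcal{F}}^{N}$. Since each forcing chain contributes exactly one active vertex at time-step $N$, we have $\abs{B'}=m$, so it suffices to exhibit that $B'$ is a power dominating set with $\ppt(G,B')\le N$, whence $\ppt(G,m)\le N$.

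Next I would isolate the two halves $H_1=G[V_{\mathcal{F}}^{[0,N]}]$ and $H_2=G[V_{\mathcal{F}}^{[N,K]}]$, which share exactly the vertices of $B'$. By Lemma~\ref{lemma:cutset} there are no edges between $V_{\mathcal{F}}^{N-}$ and $V_{\mathcal{F}}^{N+}$, so every vertex of $V_{\mathcal{F}}^{N-}$ has all its neighbors inside $H_1$ and every vertex of $V_{\mathcal{F}}^{N+}$ has all its neighbors inside $H_2$. The power domination process begins with the domination step, coloring $N_G[B']$ blue, and thereafter applies CCR-$\Z$. The key observation is that once $N_G[B']$ is blue, no vertex of $B'$ ever needs to force, since all of its neighbors are already blue; hence all subsequent forcing is carried out by vertices lying strictly on one side of the cut, and by the cut property the standard zero forcing in $G$ restricted to $V(H_1)$ coincides with standard zero forcing in $H_1$, and likewise for $H_2$. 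This is precisely where the domination step plays the role that the ``independent components'' mechanism of CCR-$\Zp$ played in Theorem~\ref{t:pt-ub-4-ptp}: the troublesome set $B'$, whose vertices may have white neighbors on both sides and thus cannot force in $G$, is neutralized by coloring all of $N_G[B']$ at once.

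For the step count, I would apply Corollary~\ref{cor:interval}, which gives that $B'$ is a standard zero forcing set of $H_1$ with $\pt(H_1,B')\le N$ and of $H_2$ with $\pt(H_2,B')\le K-N$. I would then observe that the domination step already colors the entire first round of forces of each of these two processes, since those forced vertices are neighbors of $B'$ and hence lie in $N_G[B']$; by the standard fact that a zero forcing set together with its first round of forces finishes in one fewer step, the residual standard zero forcing completes in at most $N-1$ steps on $H_1$ and at most $(K-N)-1\le N-1$ steps on $H_2$ (using $K-N\le N$). Since the two processes run independently, combining the domination step with these $\le N-1$ standard steps gives $\ppt(G,B')\le 1+(N-1)=N=\lc \frac{\pt(G,m)}{2}\rc$.

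I expect the main obstacle to be the rigorous justification of the decoupling in the second paragraph: one must confirm that after the domination step the standard process genuinely splits into two noninterfering processes on $H_1$ and $H_2$, and that adding the extra blue vertices of $N_G[B']$ can only accelerate, never stall, each side. The ``saves one step'' claim requires checking that $N_G[B']\cap V(H_1)$ contains $B'$ together with the first round of forces of the $H_1$-process, and symmetrically for $H_2$, so that the standard acceleration fact applies. The degenerate cases $\pt(G,m)=0$ (forcing $B=V(G)$) and $K-N=0$ (making $H_2$ trivial) should be disposed of separately, but both are immediate.
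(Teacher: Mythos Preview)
Your proposal is correct and follows the same overall strategy as the paper's proof: take $B'=V_{\mathcal F}^{N}$ with $N=\lceil \pt(G,m)/2\rceil$, use Lemma~\ref{lemma:cutset} to decouple the two sides, and show that after the domination step each side needs at most $N-1$ further standard forcing rounds.

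The only real difference is how the ``$-1$'' is obtained. The paper introduces open-interval sets $V_{\mathcal F}^{[0,N)}$, $V_{\mathcal F}^{(N,K]}$ and boundary sets $V_{\mathcal F}^{\bd(N-)}$, $V_{\mathcal F}^{\bd(N+)}$, proves (via Lemma~\ref{subgraph2} from Section~\ref{s:relaxed-chron-subgraphs}) that these boundary sets are standard zero forcing sets of $G[V_{\mathcal F}^{N-}]$ and $G[V_{\mathcal F}^{N+}]$ with propagation times $\le N-1$ and $\le K-N-1$, and then observes that both boundary sets lie in $N_G[B']$. You instead stay entirely within the closed-interval machinery of Section~\ref{s:PIP-apps}: apply Corollary~\ref{cor:interval} to get $\pt(H_1,B')\le N$ and $\pt(H_2,B')\le K-N$, and then shave off one round by noting that the domination step already colors $(B')^{[1]}_{H_i}\subseteq N_G[B']\cap V(H_i)$. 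Your route avoids the extra definitions and the dependence on Section~\ref{s:relaxed-chron-subgraphs}; the paper's route, once the open-interval lemma is in hand, makes the ``$-1$'' appear without the separate first-round observation and yields the auxiliary corollary on $V_{\mathcal F}^{(M,N)}$, $V_{\mathcal F}^{[M,N)}$, $V_{\mathcal F}^{(M,N]}$ as a byproduct.
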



\section{Path bundles}\label{s:relaxed-chron-subgraphs}

In this section, we introduce path bundles (see Definition \ref{d:pathbundle}). These are 
  collections of paths contained in PSD forcing trees that are PIPs for the induced subgraph on the vertices in the bundle. We apply path bundles to compare PSD and standard propagation times, and we establish a PSD analog of the reversal of a standard zero forcing process. We start by showing that relaxed chronologies provide us a convenient method of restricting forces to subgraphs.

\begin{definition}
Let $G$ be a graph with induced subgraph $H$, and let $X$-CCR be a color change rule. 
\begin{enumerate}
    \item Given a set of $X$-forces $F$ between the vertices in $G$, define its \emph{restriction} to $H$, denoted $F|_H$, to be $\{v\to u\in F: u,v\in V(H)\}$.
    \item Given a relaxed chronology $\mathcal{F}=\{F^{(k)}\}_{k=1}^K$ for some $X$-forcing set $B$ of $G$, define its \emph{restriction} to $H$, denoted $\mathcal{F}|_H$, to be $\{F^{(k)}|_H\}_{k=1}^K$.
\end{enumerate}
\end{definition}

Notice that in $\mathcal{F}|_H$, we preserve the time-step of each force.  If $\mathcal{F}|_H$ is a relaxed chronology for some zero 
forcing set $B'$ of $H$, we have that $\ct(\mathcal{F}|_H)=\ct(\mathcal{F})$. Additionally, it is possible that for some $k$, $F^{(k)}\neq \emptyset$ and $F^{(k)}|_H=\emptyset$.

\begin{definition}
Let $G$ be a graph, $H$ be an induced subgraph of $G$, and $B$ be a PSD forcing set of $G$.  Let $\mathcal F$ be a relaxed chronology of PSD forces of $B$ on $G$ with expansion sequence $\{E^{[k]}\}_{k=0}^K$ and PSD forcing trees $\mathcal T=\{T_i\}_{i=1}^{|B|}$.  A component $T$ of $T_i\cap H$ is a \emph{forcing subtree} of $T_i$ in $H$, and $u\in V(T)$ is an \emph{initial vertex} if either $u \in B$ or there exists $k\in \mathbb{N}$ such that $u \in E_{\mathcal F}^{(k)}$ and $v \rightarrow u \in F^{(k)}$ but $v \not \in V(T)$.  When a forcing subtree $T$ is a path, we also call $T$ a \emph{forcing subpath}. 
\end{definition}

\begin{lemma}\label{subgraph}
Let $G$ be a graph, $H$ be an induced subgraph of $G$, $B$ be a PSD forcing set of $G$, and $\mathcal{F}$ be a relaxed chronology of PSD forces of $B$ on $G$ with forcing trees $\mathcal T$. Let $B'$ be the set of initial vertices in the forcing subtrees of $\mathcal{T}$ in $H$. Then $B'$ is a PSD forcing set of $H$, and $\mathcal{F}|_H$ defines a relaxed chronology of PSD forces for $B'$ in $H$ with $\ptp(H,\mathcal{F}|_H)\leq \ptp(G,\mathcal{F})$.
\end{lemma}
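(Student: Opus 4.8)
The plan is to verify that $\mathcal{F}|_H$ satisfies the three defining conditions of a relaxed chronology of PSD forces for $B'$ on $H$ (Definition \ref{rel_chron}), and then to read off the propagation time bound from the fact that restriction preserves time-steps. First I would unwind the definitions: the forcing subtrees of $\mathcal{T}$ in $H$ are exactly the connected components of the graphs $T_i \cap H$ as $i$ ranges over $\{1,\dots,|B|\}$, and $B'$ collects, from each such subtree $T$, every vertex $u$ that is either originally blue or is forced in $G$ by a vertex $v \notin V(T)$. The key structural observation is that every vertex of $H$ lies in exactly one forcing subtree (since the $T_i$ cover $G$ and components partition $T_i \cap H$), so the forcing subtrees partition $V(H)$, and inside a single subtree $T$ the forces of $\mathcal{F}|_H$ that stay within $V(T)$ should reconstruct $T$ from its initial vertices.

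The heart of the argument is condition (1) of Definition \ref{rel_chron}: I must show that each retained force $v \to u \in F^{(k)}|_H$ is a valid PSD force in $H$ when exactly the vertices of $E_{\mathcal{F}|_H}^{[k-1]}$ are blue. I would prove this by comparing blue sets in $G$ and $H$. The natural claim to establish by induction on $k$ is that $E_{\mathcal{F}|_H}^{[k]} = E_{\mathcal{F}}^{[k]} \cap V(H)$; this identifies the blue set of the restricted process at each time-step with the trace on $H$ of the blue set of the original process, using that each $u \in V(H)$ acquires its color in $H$ at the same time-step it does in $G$ (either as an initial vertex, or via a within-$H$ force $v \to u$ with $v \in V(H)$). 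Given this identity, suppose $v \to u \in F^{(k)}$ with $u,v \in V(H)$; in $G$ this means $u$ is the unique neighbor of $v$ inside the component $C$ of $G - E_{\mathcal{F}}^{[k-1]}$ containing $u$. I then need that $u$ remains the unique neighbor of $v$ inside the relevant component of $H - E_{\mathcal{F}|_H}^{[k-1]}$. This is where the induced-subgraph hypothesis is essential: edges of $H$ are exactly the edges of $G$ between vertices of $H$, so any $H$-neighbor of $v$ in $H - E_{\mathcal{F}|_H}^{[k-1]}$ is a $G$-neighbor of $v$ lying in $V(H) \setminus E_{\mathcal{F}}^{[k-1]}$, hence in $C$; thus $v$ has at most the one white $H$-neighbor $u$ in the component of $H - E_{\mathcal{F}|_H}^{[k-1]}$ containing $u$, which makes $v \to u$ a legal PSD force in $H$. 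Conditions (2) and (3) are then routine: condition (2) (no two forces target the same vertex) is inherited directly from $\mathcal{F}$ since $F^{(k)}|_H \subseteq F^{(k)}$, and condition (3) ($E_{\mathcal{F}|_H}^{[K]} = V(H)$) follows from the blue-set identity together with $E_{\mathcal{F}}^{[K]} = V(G)$, once one checks that every vertex of $H$ that is \emph{not} an initial vertex is forced from within its own subtree.

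I expect the main obstacle to be the uniqueness-of-white-neighbor verification in the PSD color change rule, specifically the bookkeeping around components of $G - E_{\mathcal{F}}^{[k-1]}$ versus components of $H - E_{\mathcal{F}|_H}^{[k-1]}$. The subtlety is that passing to the induced subgraph $H$ can only split or shrink these components, never merge white vertices that were separated in $G$, so the PSD condition $N_G(v) \cap V(C) = \{u\}$ can only become easier to satisfy in $H$; making this monotonicity precise, and confirming that the \emph{initial vertices} are exactly the vertices of $H$ whose unique incoming force in $\mathcal{F}$ came from outside $V(H)$ (so that $B'$ is precisely the set needed to seed the restricted process), is the delicate part. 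Once the blue-set identity $E_{\mathcal{F}|_H}^{[k]} = E_{\mathcal{F}}^{[k]} \cap V(H)$ is established, the propagation-time bound is immediate: the underlying set of forces of $\mathcal{F}|_H$ is a subset of that of $\mathcal{F}$, and since restriction preserves the time-step of each surviving force while deleting others, propagating the restricted forces in $H$ terminates no later than propagating in $G$, giving $\ptp(H,\mathcal{F}|_H) \leq \ptp(G,\mathcal{F})$ via Definition \ref{d:pt-set-forces}.
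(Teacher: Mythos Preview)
Your approach is essentially the paper's: show each retained force remains a valid PSD force in $H$ because passing to an induced subgraph can only shrink white components, and track the expansion sequence of $\mathcal{F}|_H$ in terms of that of $\mathcal{F}$. However, your claimed identity $E_{\mathcal{F}|_H}^{[k]} = E_{\mathcal{F}}^{[k]} \cap V(H)$ is false in general. It already fails at $k=0$: the left side is $B'$ while the right side is $B \cap V(H)$, and $B'$ typically contains initial vertices that are forced from outside $V(H)$ at some positive time-step of $\mathcal{F}$. Your supporting assertion that ``each $u \in V(H)$ acquires its color in $H$ at the same time-step it does in $G$'' is wrong for exactly these vertices: they are blue in $H$ from time $0$ (being in $B'$) but only become blue in $G$ strictly later. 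The correct identity, which is what the paper establishes, is
\[
E_{\mathcal{F}|_H}^{[k]} \;=\; \bigl(E_{\mathcal{F}}^{[k]} \cap V(H)\bigr) \cup B'.
\]
Fortunately this does not derail your argument. Your verification of condition (1) only uses the containment $E_{\mathcal{F}|_H}^{[k-1]} \supseteq E_{\mathcal{F}}^{[k-1]} \cap V(H)$ (a larger blue set in $H$ can only further shrink the white component containing $u$, so the PSD force $v\to u$ remains valid), and condition (3) still follows since $(V(G) \cap V(H)) \cup B' = V(H)$. The propagation-time inequality likewise only needs this one-sided containment in the inductive comparison of propagated blue sets. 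So the fix is a bookkeeping correction rather than a change of strategy, but as written the inductive claim you propose to prove is not true.
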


\begin{proof}
Let $\mathcal{F}=\{F^{(k)}\}_{k=1}^K$ with corresponding expansion sequence $\{E_{\mathcal{F}}^{[k]}\}_{k=0}^K$. We first show that the PSD forces in $F^{(k)}|_H$ are valid when $E_{\mathcal{F}}^{[k-1]}\cap V(H)$ is blue. Consider $v\to u\in F^{(k)}|_H$. Since $v\to u\in F^{(k)}$, $u$ is the unique neighbor of $v$ in some component of $G-E_{\mathcal{F}}^{[k-1]}$. The components of $H-E_{\mathcal{F}}^{[k-1]}$ are formed from induced subgraphs of $G-E_{\mathcal{F}}^{[k-1]}$, and hence $v\to u$ is a valid PSD force in $H$ during time-step $k$.

We now claim that if $(E_{\mathcal{F}}^{[k-1]}\cap V(H))\cup B'$ is blue, then all vertices in $(E_{\mathcal{F}}^{[k]}\cap V(H))\cup B'$ will be blue after the forces in $F^{(k)}|_H$ are performed. Consider $v\to u\in F^{(k)}$ with $u\in (E_{\mathcal{F}}^{[k]} \cap V(H))\cup B'$. If $v\notin V(H)$, then $u$ is an initial vertex in some forcing subtree. By definition, $u\in B'$, so we conclude that $u$ is initially blue, and hence also blue after time-step $k$. Otherwise, $v\in V(H)$, so the preceding paragraph implies that $v\to u\in F^{(k)}|_H$ is a valid PSD force in $H$, and $u$ will be blue after time-step $k$. Combining these two results with induction on $k$, we conclude that $\mathcal{F}|_H$ is a relaxed chronology of PSD forces for $B'$ in $H$ with expansion sequence $\{(E_{\mathcal{F}}^{[k]}\cap V(H))\cup B'\}_{k=0}^K$. 
\end{proof}

In the case that $\mathcal F$ is a propagating set of PSD or standard forces, then by Lemmas \ref{subgraph} and  \ref{subgraph2}, it follows that 
\[\pt_+(H,B') \leq \pt_+(G,B) \text{ and } \pt(H,B') \leq \pt(G,B).\]  The  proof of Lemma \ref{subgraph} can be adapted to establish the next lemma.

\begin{lemma}\label{subgraph2}
Let $G$ be a graph, $H$ be an induced subgraph of $G$, $B$ be a standard zero forcing set of $G$, and $\mathcal{F}$ be a relaxed chronology of standard forces of $B$ on $G$ with chain set $\mathcal{C}$. Let $B'$ be the set of initial vertices in the forcing subpaths of $\mathcal{C}$ in $H$. Then $B'$ is a standard zero forcing set of $H$, and $\mathcal{F}|_H$ defines a relaxed chronology of standard forces for $B'$ in $H$ with $\pt(H,\mathcal{F}|_H)\leq \pt(G,\mathcal{F})$.
\end{lemma}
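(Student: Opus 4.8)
The plan is to mimic the proof of Lemma \ref{subgraph} almost verbatim, replacing the PSD color change rule with the standard color change rule (CCR-$\Z$) and forcing trees with forcing chains (paths). The author explicitly says the proof of Lemma \ref{subgraph} ``can be adapted,'' so the work is to identify exactly which steps transfer unchanged and which require the special structure of standard forcing. First I would set $\mathcal{F}=\{F^{(k)}\}_{k=1}^K$ with expansion sequence $\{E_{\mathcal{F}}^{[k]}\}_{k=0}^K$ and establish the two parallel claims: (i) every force in $F^{(k)}|_H$ is a valid standard force once $E_{\mathcal{F}}^{[k-1]}\cap V(H)$ is blue, and (ii) if $(E_{\mathcal{F}}^{[k-1]}\cap V(H))\cup B'$ is blue, then after performing $F^{(k)}|_H$ all of $(E_{\mathcal{F}}^{[k]}\cap V(H))\cup B'$ is blue. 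Claim (ii) is formally identical to the PSD case, since it only uses the definition of initial vertex and the fact that forces with both endpoints in $H$ survive the restriction; I would argue it by the same case split on whether the forcing vertex $v$ lies in $V(H)$.

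The step that genuinely needs a different argument is the validity claim (i), because CCR-$\Z$ requires a blue vertex to have a \emph{unique} white neighbor in all of $H$, not merely a unique neighbor in a component of $H-B$. So I would argue as follows: suppose $v\to u\in F^{(k)}|_H$, so both $u,v\in V(H)$ and $v\to u\in F^{(k)}$. Since $v\to u$ is a valid standard force in $G$ when $E_{\mathcal{F}}^{[k-1]}$ is blue, $u$ is the unique white neighbor of $v$ in $G$ at that stage; that is, every neighbor of $v$ in $G$ other than $u$ lies in $E_{\mathcal{F}}^{[k-1]}$. Because $H$ is an induced subgraph, every neighbor of $v$ in $H$ is also a neighbor of $v$ in $G$, so every $H$-neighbor of $v$ other than $u$ lies in $E_{\mathcal{F}}^{[k-1]}\cap V(H)$, hence is blue. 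Therefore $u$ is the unique white neighbor of $v$ in $H$, and $v\to u$ is a valid CCR-$\Z$ force in $H$ during time-step $k$.

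With (i) and (ii) in hand, I would finish exactly as in Lemma \ref{subgraph}: induct on $k$ to conclude that the expansion sequence induced by $\mathcal{F}|_H$ on $B'$ is $\{(E_{\mathcal{F}}^{[k]}\cap V(H))\cup B'\}_{k=0}^K$, that this exhausts $V(H)$ at $k=K$ (so $B'$ is a standard zero forcing set of $H$ and $\mathcal{F}|_H$ a relaxed chronology of standard forces for $B'$), and that $\pt(H,\mathcal{F}|_H)\leq \ct(\mathcal{F}|_H)=\ct(\mathcal{F})$. Since $\pt(H,\mathcal{F}|_H)$ is the propagation time of the underlying set of forces and $\pt(G,\mathcal{F})=\ct(\mathcal{F})$ when $\mathcal{F}$ is suitably chosen, the inequality $\pt(H,\mathcal{F}|_H)\leq \pt(G,\mathcal{F})$ follows from Definition \ref{d:pt-set-forces} together with the observation $\pt_X(G,\mathcal{F})\leq\ct(\mathcal{F})$; more directly, restricting forces can only delay or remove forces, never accelerate them.

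I expect the main obstacle to be exactly the validity step (i): in the PSD argument one invokes ``components of $H-E^{[k-1]}$ are induced subgraphs of components of $G-E^{[k-1]}$,'' which is the natural way PSD forcing restricts, but for standard forcing the cleaner and correct observation is the uniqueness-of-white-neighbor argument above, which crucially uses that $H$ is \emph{induced} (so no $G$-neighbor of $v$ is lost or gained inside $H$). The only subtlety to double-check is that the notion of initial vertex specializes correctly to forcing subpaths (the components of $C_i\cap H$ for a chain $C_i$), but since the definition of initial vertex is stated uniformly for forcing subtrees and a path is a tree, no modification is needed beyond noting that a forcing subpath is indeed a path, consistent with the hypothesis $\mathcal{F}$ being a relaxed chronology of standard forces whose chains induce paths.
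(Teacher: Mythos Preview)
Your proposal is correct and follows precisely the route the paper indicates: the paper's entire ``proof'' of Lemma~\ref{subgraph2} is the sentence that the proof of Lemma~\ref{subgraph} ``can be adapted,'' and you carry out exactly that adaptation, correctly pinpointing that the only nontrivial change is the validity step~(i), where the PSD component argument is replaced by the observation that if $u$ is the unique white neighbor of $v$ in $G$ then, because $H$ is induced, $u$ is also the unique white neighbor of $v$ in $H$.

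One small remark on the final inequality: your chain $\pt(H,\mathcal{F}|_H)\leq\ct(\mathcal{F}|_H)=\ct(\mathcal{F})$ only yields $\pt(H,\mathcal{F}|_H)\leq\ct(\mathcal{F})$, which is weaker than the stated $\pt(H,\mathcal{F}|_H)\leq\pt(G,\mathcal{F})$ since $\pt(G,\mathcal{F})$ refers to the underlying set of forces (Definition~\ref{d:pt-set-forces}). Your informal ``restricting forces can only delay'' is the right idea; to make it precise, first replace $\mathcal{F}$ by the relaxed chronology $\mathcal{F}'$ obtained by propagating the same underlying set of forces, so that $\ct(\mathcal{F}')=\pt(G,\mathcal{F})$, and then apply your argument to $\mathcal{F}'$ to get $\pt(H,\mathcal{F}|_H)=\pt(H,\mathcal{F}'|_H)\leq\ct(\mathcal{F}')=\pt(G,\mathcal{F})$. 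The paper's proof of Lemma~\ref{subgraph} leaves this step equally implicit, so this is a clarification rather than a gap.
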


We now turn our attention to the case when the restriction of PSD forcing results in standard zero forcing and the resulting zero forcing set has the same size as the original PSD forcing set.

\begin{definition}\label{d:pathbundle}
Let $G$ be a graph, $B$ be a PSD forcing set of $G$, and let $\mathcal{F}$ be a relaxed chronology of PSD forces of $B$ on $G$ with associated PSD forcing tree cover $\mathcal{T}=\{T_i\}_{i=1}^{|B|}$. Let $\mathcal{Q}=\{Q_i\}_{i=1}^{|B|}$ be a collection of paths such that  $V(Q_i)\subseteq V(T_i)$.  Define $H=G[\bigcup_{Q_i\in \mathcal{Q}}V(Q_i)]$. We say that $\mathcal{Q}$ is a \emph{path bundle} of $\mathcal{F}$ if $\mathcal{F}|_H$ is a relaxed chronology of standard forces in $H$ for the initial vertices in $\mathcal{Q}$. We abuse notation and use $\mathcal{F}|_{\mathcal{Q}}$ to also denote $\mathcal{F}|_H$.
\end{definition}

Observe that by definition, the chain set of $\mathcal{F}|_{\mathcal{Q}}$ will be $\mathcal{Q}$ itself. Additionally, any path bundle $\mathcal{Q}$ is a parallel increasing path cover of the induced subgraph $H$. When $\mathcal{Q}$ contains the vertices of $B$ and $\abs{\mathcal Q}=\abs{B}$, then the set of initial vertices of $\mathcal{F}$ in $H$ is precisely $B$, and $\pt(H,B)\leq \ptp(G,B)$; see Corollary \ref{cor:bundlept}. There are trivial examples of path bundles, such as the paths consisting of just the vertices of $B$ or the collection consisting of a single path $Q_1\subseteq T_1$. However, there are also many nontrivial cases where  $\mathcal{F}|_{\mathcal{Q}}$ provides us information about $\mathcal{F}$ and the original graph $G$. Since $\F|_{\mathcal{Q}}$ is a relaxed chronology of standard  forces for the subgraph $H$ induced by the vertices of the paths in $\mathcal{Q}$, we can consider its terminus  $\term(\mathcal{F}|_{\mathcal{Q}})$. 

\begin{definition}
Let $G$ be a graph, $B$ be a PSD forcing set of $G$, and $\mathcal{F}$ be a relaxed chronology of PSD forces of $B$ on $G$. Fix a vertex $x\in V(G)$.  For $k=0,1,2,\dots,\rd(x)-1$, define $C^{k}_x=\comp(G-E_{\mathcal F}^{[k]},x)$ to be the component of $G-E_{\mathcal F}^{[k]}$ containing $x$.  Construct  collections of paths $\mathcal{Q}^{[k]}$ as follows:
\begin{enumerate}
    \item Let $\{v^{0}_i\}_{i=1}^{|B|}$ be the vertices of $B$, and let $\mathcal{Q}^{[0]}=\{Q_i^{[0]}\}_{i=1}^{|B|}=\{\{v_i^{0}\}\}_{i=1}^{|B|}$ be the collection of single-vertex paths on the vertices of $B$.
    \item  For each $i$, if $v^{k}_i$ forces some vertex  $w\in C^{k}_x$ at time-step $k+1$ of $\mathcal{F}$ (i.e., $v^k_i\to w\in F^{(k+1)}$), then define $v^{k+1}_i=w$, and construct $Q_i^{[k+1]}$ by adding $v^{k+1}_i$ to the end of the path $Q_i^{[k]}$. For the remaining paths $Q_i^{[k]}$, define $Q_i^{[k+1]}=Q_i^{[k]}$ and $v^{k+1}_i=v^{k}_i$.
    \item Finally, let $\mathcal{Q}^{[k+1]}=\{Q_i^{[k+1]}\}_{i=1}^{|B|}$.
\end{enumerate}
We call $\mathcal{Q}=\mathcal{Q}^{[\rd(x)]}$ the \emph{path bundle of $\mathcal{F}$ induced by $x$}.  If we wish to speak of this type of path bundle in general rather than a specific instance of one, then we will refer to them as {\em vertex-induced path bundles} or {\em path bundles induced by a vertex}.
\end{definition}

\begin{lemma} \label{path_builder}
Let $G$ be a graph, $B$ be a PSD forcing set of $G$, $\mathcal F$ be a relaxed chronology of PSD forces of $B$ on $G$, and $\mathcal T$ be the PSD forcing tree cover defined  by $\mathcal F$.  Then at each time-step $k$, given a component $C$ of $G - E_{\mathcal F}^{[k]}$ and a PSD forcing tree $T \in \mathcal T$, there is at most one vertex $v \in V(T) \cap E_{\mathcal F}^{[k]}$ such that $v$ has white neighbors in $C$. 
\end{lemma}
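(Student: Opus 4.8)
My plan is to prove the statement by induction on the time-step $k$, exploiting the fact that every non-root vertex $u$ of a forcing tree is forced exactly once (once blue it can never again be a valid PSD target), so that $u$ has a well-defined \emph{parent} $p(u)$ in its tree and a well-defined time $t(u)$ at which it becomes blue, namely the unique index with $u\in E_{\mathcal F}^{(t(u))}$; for a root $b\in B$ I set $t(b)=0$. For a tree $T$ and a component $C$ of $G-E_{\mathcal F}^{[k]}$, let $A(T,C)$ denote the set of $v\in V(T)\cap E_{\mathcal F}^{[k]}$ having a white neighbor in $C$; the goal is to show $\abs{A(T,C)}\leq 1$. The base case $k=0$ is immediate: the forcing trees partition $V(G)$ and each contains exactly one vertex of $B$, so the only vertex of $T$ in $E_{\mathcal F}^{[0]}=B$ is the root of $T$, whence $\abs{A(T,C)}\leq 1$.

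For the inductive step I would assume the conclusion at every time-step smaller than $k$ and suppose, toward a contradiction, that there are distinct $v_1,v_2\in A(T,C)$ with white neighbors $w_1,w_2\in C$. After relabeling I may assume $t(v_2)\geq t(v_1)$; writing $t_2=t(v_2)$, the fact that $T$ contains only one vertex of $B$ forces $t_2\geq 1$, so that $t_2-1<k$ and the inductive hypothesis is available at time $t_2-1$. The crucial step is to locate all the relevant white vertices in a single component at that earlier time: since $w_1$, $w_2$, and every vertex of a $w_1w_2$-path inside $C$ are white at time $k$, they are white at time $t_2-1$ as well, and since $v_2$ is still white at time $t_2-1$ and adjacent to $w_2$, the edge $v_2w_2$ together with this white path places $v_2$, $w_2$, and $w_1$ in one component $D$ of $G-E_{\mathcal F}^{[t_2-1]}$.

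With $D$ in hand, I would finish through a case analysis driven by the validity of the force $p(v_2)\to v_2$ carried out at time $t_2$, which by CCR-$\Zp$ forces $v_2$ to be the \emph{unique} neighbor of $p(v_2)$ in $D$. If $v_1=p(v_2)$, then $v_1$ is adjacent to the two distinct white vertices $v_2,w_1\in D$, contradicting this uniqueness. If $v_1\neq p(v_2)$ and $t(v_1)<t_2$, then $v_1$ and $p(v_2)$ are distinct vertices of $T$ that are blue at time $t_2-1$ and adjacent to $D$ (via $w_1$ and $v_2$ respectively), contradicting the inductive hypothesis for the component $D$. Finally, if $t(v_1)=t_2$, then $v_1$ is also white at time $t_2-1$ and lies in $D$; comparing the parents $p(v_1)$ and $p(v_2)$, either they coincide and that single vertex has two white neighbors $v_1,v_2$ in $D$ (again contradicting CCR-$\Zp$), or they differ and furnish two distinct vertices of $T$ blue at time $t_2-1$ and adjacent to $D$ (again contradicting the inductive hypothesis).

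I expect the main obstacle to be precisely this final case analysis: the argument only works after one has correctly separated the possibilities that $v_1$ is the parent of $v_2$, that $v_1$ and $v_2$ became blue simultaneously, and that they share a parent, and has matched each possibility to the right contradiction---either the PSD uniqueness condition for a single force or the inductive hypothesis applied to $D$ at time $t_2-1$. The reduction to the single earlier component $D$, achieved by threading the white $w_1w_2$-path through $C$, is the conceptual heart that makes all of these contradictions available.
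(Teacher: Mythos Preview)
Your argument is correct, but it takes a different route from the paper's proof. The paper argues directly by contradiction with no induction: given two blue vertices $v,w\in V(T)\cap E_{\mathcal F}^{[k]}$ with white neighbours $v',w'\in C$, it locates the \emph{branching ancestor} $u$ in $T$, i.e., the unique vertex from which the forcing paths $u\to v_1\to\cdots\to v$ and $u\to w_1\to\cdots\to w$ diverge, lets $j$ be the time at which the earlier of $v_1,w_1$ is forced, and observes that the entire walk $v_1,\ldots,v,v',\ldots,w',w,\ldots,w_1$ is white and connected at time $j-1$, so $u$ has two white neighbours in one component, contradicting CCR-$\Zp$.

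Your proof instead steps back only one level in the tree (to $p(v_2)$ and possibly $p(v_1)$) and leans on the inductive hypothesis at time $t_2-1$ to handle the cases the paper absorbs into the single branching-ancestor argument. The paper's approach is shorter and avoids the case split, since going all the way back to the common ancestor lands you directly at a single forcing step that is already invalid; your approach trades that global tree insight for a routine induction, which makes each step easier to verify but multiplies the bookkeeping. Both exploit the same white $w_1w_2$-path inside $C$ to place the relevant vertices in a single earlier white component.
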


\begin{proof}
We proceed by contradiction. Suppose there are distinct $v,w\in V(T)\cap E^{[k]}_{\mathcal F}$ respectively adjacent to $v',w'\in V(C)$. Then at time $k$, there exists a path of white vertices  $v',\dots, w'$ entirely within $C$. 
Let $u\in T$ be the vertex such that the forces $u\to v_1\to \dots \to v$ and $u\to w_1\to \dots \to w$ are in $\mathcal{F}$ with $v_1\neq w_1$. Without loss of generality, assume $v_1$ is forced at the same time as or before $w_1$.  Letting $j$ be the time when $v_1$ is forced, we see that the path $v_1,\dots, v,v',\dots, w',w, \dots, w_1$ consists entirely of white vertices in a single component of $G-E_{\mathcal{F}}^{[j-1]}$. Then at time $j$, vertex $u$ is adjacent to two white vertices $v_1$ and $w_1$ in the same component of $G-E_{\mathcal{F}}^{[j-1]}$, which contradicts the fact that $u\to v_1$ at this time-step.
\end{proof}

\begin{lemma}\label{inducedbundle}
Let $G$ be a graph, $B$ be a PSD forcing set of $G$, and $\mathcal{F}$ be a relaxed chronology of PSD forces of $B$ on $G$. For any $x\in V(G)$, the path bundle $\mathcal{Q}$ of $\mathcal{F}$ induced by $x$ is a path bundle that contains $B$ and $x$. Furthermore, if $\mathcal{F}$ is a propagating family of PSD forces, then in $\mathcal{F}|_{\mathcal{Q}}$, there is at least one force in each time-step until all vertices of $G[\bigcup_{P\in\mathcal{Q}}V(P)]$ are blue.
\end{lemma}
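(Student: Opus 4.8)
The plan is to verify the three assertions in turn, relying on the structure of the vertex-induced construction together with Lemma~\ref{path_builder} and Lemma~\ref{subgraph}. Write $\mathcal{F}=\{F^{(k)}\}_{k=1}^{K}$ with expansion sequence $\{E_{\mathcal F}^{[k]}\}$, let $T_i$ be the forcing tree with root $v_i^0\in B$, and recall that every force of $\mathcal{F}$ lies within a single tree. First I would record two elementary structural facts. Each $Q_i$ is a genuine path in $T_i$: collapsing the time-steps at which the head stalls (where $v_i^{k+1}=v_i^k$), the path $Q_i$ traverses consecutive parent--child forces $v_i^{k}\to v_i^{k+1}$, so $V(Q_i)\subseteq V(T_i)$. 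Since the trees are vertex-disjoint and each forced vertex has a unique parent, the forces of $\mathcal{F}$ with both endpoints in $H=G[\bigcup_i V(Q_i)]$ are exactly these path forces. Second, because $G-E_{\mathcal F}^{[k+1]}$ is an induced subgraph of $G-E_{\mathcal F}^{[k]}$, the components satisfy $C^{k+1}_x\subseteq C^{k}_x$; as each head $v_j^{l}$ is added from $C^{l-1}_x$, monotonicity gives $v_j^{l}\in C^{k}_x$ for all $k<l$, so every vertex of $H$ that is still white after time-step $k$ lies in $C^{k}_x$.

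These two facts yield the path-bundle claim. By Lemma~\ref{subgraph}, $\mathcal{F}|_H$ is a relaxed chronology of PSD forces for the initial vertices of the subtrees (which here are exactly the roots $v_i^0$, so $B\subseteq\mathcal{Q}$), and it suffices to upgrade each PSD force to a standard one. When $v_i^k\to v_i^{k+1}$ is performed, the PSD rule in $G$ makes $v_i^{k+1}$ the unique white neighbor of $v_i^k$ in $C^k_x$; since every white neighbor of $v_i^k$ in $H$ lies in $C^k_x$ by the monotonicity fact, $v_i^{k+1}$ is the unique white neighbor of $v_i^k$ in $H$, so the force is a valid standard force. The remaining relaxed-chronology conditions are inherited from the PSD version, so $\mathcal{Q}$ is a path bundle and $\mathcal{F}|_{\mathcal{Q}}=\mathcal{F}|_H$.

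The technical heart of the argument---and the step I expect to be the main obstacle---is the invariant that for every $i$ and every $k\le\rd(x)$, if some vertex of $V(T_i)\cap E_{\mathcal F}^{[k]}$ is adjacent to $C^{k}_x$, then $v_i^k$ is that vertex (which is unique by Lemma~\ref{path_builder}). I would prove this by induction on $k$; the base case is immediate since $V(T_i)\cap E_{\mathcal F}^{[0]}=\{v_i^0\}$. For the inductive step, let $p\in V(T_i)\cap E_{\mathcal F}^{[k+1]}$ be adjacent to $C^{k+1}_x$. If $p$ was already blue at time $k$, then since $C^{k+1}_x\subseteq C^{k}_x$ it is adjacent to $C^{k}_x$, so $p=v_i^k$ by induction; moreover $v_i^k$ cannot have forced into $C^k_x$ at step $k+1$, since its unique white neighbor there would have turned blue and left no neighbor in $C^{k+1}_x$, so the head stalled and $v_i^{k+1}=v_i^k=p$. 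If instead $p$ was forced at step $k+1$, its parent $v$ lies in $V(T_i)\cap E_{\mathcal F}^{[k]}$ and $p\in C^k_x$ (being white at time $k$ and adjacent to $C^{k+1}_x\subseteq C^k_x$); then $v$ is adjacent to $C^k_x$, so $v=v_i^k$ by induction, and the construction appends $p=v_i^{k+1}$. Managing how the frontier of the shrinking component $C^k_x$ tracks along each tree is the delicate point, and Lemma~\ref{path_builder} is exactly what keeps this frontier single-valued.

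With the invariant in hand, the remaining claims follow. The vertex $x$ is white until time-step $\rd(x)$, so $x\in C^{\rd(x)-1}_x$, and the vertex $u$ forcing $x$ at step $\rd(x)$ lies in some $T_i$ and is adjacent to $C^{\rd(x)-1}_x$; by the invariant $u=v_i^{\rd(x)-1}$, so the construction sets $v_i^{\rd(x)}=x$ and $x\in V(Q_i)$, giving $x\in\mathcal{Q}$. Finally, suppose $\mathcal{F}$ is a propagating family and $H$ still has a white vertex after time-step $k$; that vertex lies in $C^k_x$, so $C^k_x\neq\emptyset$. If no force entered $C^k_x$ at step $k+1$, then no external vertex could become adjacent to $C^k_x$ and its frontier would be frozen, so $C^k_x$ could never be forced, contradicting that $B$ is a PSD forcing set. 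Hence some frontier vertex forces into $C^k_x$ at step $k+1$; by the invariant this vertex is a head $v_i^k$, the corresponding path advances, and the resulting path force lies in $F^{(k+1)}|_H$. Thus $\mathcal{F}|_{\mathcal{Q}}$ performs at least one force at every time-step until all of $H$ is blue.
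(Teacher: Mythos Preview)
Your proof is correct and follows essentially the same approach as the paper: both induct on time-steps and use Lemma~\ref{path_builder} to establish that the unique blue vertex of each tree $T_i$ adjacent to the shrinking component $C_x^k$ is precisely the current head $v_i^k$. Your presentation is slightly more modular---you isolate this as an explicit invariant and invoke Lemma~\ref{subgraph} to get the PSD relaxed chronology on $H$ before upgrading each force to a standard one---whereas the paper verifies the standard forces directly inside an induction on the statement that $\mathcal{Q}^{[k]}$ is a path bundle. (One cosmetic point: your invariant should be stated for $k\le\rd(x)-1$, since $C_x^k$ is only defined for those $k$; this is exactly how you use it, so nothing in the argument changes.)
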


\begin{proof}
We prove by induction that each $\mathcal{Q}^{[k]}$ is a path bundle. Label $B=\{v_{i}^0\}_{i=1}^m$, and note that $\mathcal{Q}^{[0]}$ is the set of trivial paths on $B$, so this is a path bundle. 
 Now suppose that we have constructed the path bundle $\mathcal{Q}^{[k]}=(Q_1^{[k]},\dots,Q_{m}^{[k]})$ with corresponding induced subgraph $H_{k}$. If no forces occur into $C^{k}_x$ at time-step $k+1$ of $\mathcal F$, then $\mathcal{Q}^{[k+1]}=\mathcal{Q}^{[k]}$ is again a path bundle.  Otherwise, we let $S\subseteq \{1,2,\dots,m\}$ contain all indices such that for each $i\in S$, there exists $v_i\in T_i\cap E_{\mathcal F}^{[k]}$ and $w_i\in C^{k}_x$ with $v_i\to w_i\in F^{(k+1)}$, where $\mathcal T=\{T_i\}_{i=1}^m$ is the PSD forcing tree cover induced by $\mathcal F$. 

If $v_i=v_i^0$, then $v_i^0\in Q_i^{[k]}$, and by definition of the PSD forcing rule, $v_i\to w_i$ is a valid standard force in $G[V(H_{k})\cup V(C^{k}_x)]$ with the vertices $v_i$ and $w_i$ forming a forcing chain, and thus a path, of length 1. If $v_i\neq v_{i}^0$, then there exists some sequence of forces in $\mathcal{F}$ from $v_i^0$ to $v_i$ consisting of vertices in $\mathcal{T}_i$. Some vertex $v$ in this sequence is the last one in $Q_i^{[k]}$, as we have that the vertex $v_i^0\in Q_i^{[k]}$. Lemma \ref{path_builder} implies that this $v$ was the unique vertex in $T_i$ with a white neighbor in $C^{k'}_x$ for some $k' \leq k$. Since $C^{k}_x$ is contained in $C^{k'}_x$, our construction of $\mathcal{Q}^{[k]}$ implies that if $k'<k$ and $v$ forced a vertex $v'$ in $C^{k'}_x$ during $F^{(k'+1)}$, then $v'$ must also be in $Q_i^{[k]}$. Since we chose $v$ as the last vertex in $Q_i^{[k]}$ in the chain from $v_i^0$ to $v_i$, we conclude that $v$ has not forced any vertex in $C^{k'}_x$, and in particular $v=v_i$. Again, we see that $v_i\to w_i$ is a valid standard force in $G[V(H_{k})\cup V(C^{k}_x)]$. Hence, if we define $Q_i^{[k+1]}=Q_i^{[k]}$ when $i\notin S$ and $Q_i^{[k+1]}$ to be $Q_i^{[k]}$ with $w_i$ appended when $i\in S$, then $(v_i^0,\dots,v_i,w_i)$ is a forcing chain, and thus a path, in $T_i[V(H_{k+1})]$, and we obtain a strictly larger path bundle $\mathcal{Q}^{[k+1]}$ from $\mathcal{Q}^{[k]}$ by adding the vertices in $C^{k}_x$ forced during time-step $k+1$. 

This process terminates at $\rd(x)$, and induction implies that $\mathcal{Q}^{[\rd(x)]}$ is a path bundle. By construction, it contains both $B$ and $x$. Additionally, if $\mathcal{F}$ is propagating, then there cannot be time-steps where no forces occur in $C^{k}_x$, as PSD forcing in each component occurs independently. Hence, $\mathcal{Q}^{[k]}\neq \mathcal{Q}^{[k+1]}$ for all $0\leq k\leq \rd(x)-1$, and at least one force occurs at each time-step of $\mathcal{F}|_{\mathcal{Q}}$ until we reach $\rd(x)$.
\end{proof}

\begin{corollary}\label{cor:bundlept}
Let $G$ be a graph, $B$ be a PSD forcing set of $G$ of size $k$, and $\mathcal F$ be a propagating family of PSD forces for $B$ on $G$.  Let  $\mathcal H$ be the collection of subgraphs $H$ of $G$ such that $H=G\left[\bigcup_{P \in \mathcal Q} V(P) \right]$ for some path bundle $\mathcal Q$ of $\mathcal F$ induced by some $x \in V(G)$.
Then
\[\max_{H \in \mathcal H}\pt(H,B) \leq \pt_+(G,B).\]
Moreover, if $x\in  B^{\left(\pt_+(G,B)\right)}$, then
\[\pt_+(G,B) \leq \min_{H \in \mathcal H}\left\vert V(H) \right\vert - k.\]
\end{corollary}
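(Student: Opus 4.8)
The plan is to establish the two inequalities separately, with Lemma~\ref{inducedbundle} doing the heavy lifting in each. For the first inequality I would fix an arbitrary $H=G[\bigcup_{P\in\mathcal Q}V(P)]\in\mathcal H$, where $\mathcal Q$ is the path bundle of $\mathcal F$ induced by some $x\in V(G)$. By Lemma~\ref{inducedbundle}, $\mathcal Q$ contains $B$; moreover, every vertex-induced path bundle is built from exactly $|B|=k$ single-vertex paths that are only ever extended (never created or destroyed), so $|\mathcal Q|=k$. Since $\mathcal Q$ contains the vertices of $B$ and $|\mathcal Q|=|B|$, the remark following Definition~\ref{d:pathbundle} gives that the set of initial vertices of $\mathcal F|_{\mathcal Q}$ is exactly $B$, so $\mathcal F|_{\mathcal Q}$ is a relaxed chronology of standard forces for $B$ on $H$. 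I would then conclude
\[
\pt(H,B)\le\pt(H,\mathcal F|_{\mathcal Q})\le\ct(\mathcal F|_{\mathcal Q})=\ct(\mathcal F)=\pt_+(G,B),
\]
using in turn that the full propagation process is no slower than any prescribed set of forces, that $\pt\le\ct$, that restriction preserves time-steps, and that $\mathcal F$ is propagating. Taking the maximum over $\mathcal H$ finishes this direction.

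For the second inequality I would take $x\in B^{(\pt_+(G,B))}$, so that $\rd(x)=\pt_+(G,B)$, and let $H$ be the subgraph of the path bundle $\mathcal Q$ induced by $x$. Because $\mathcal Q$ is a path bundle, the chain set of $\mathcal F|_{\mathcal Q}$ is $\mathcal Q$ itself, so every vertex of $H$ outside $B$ is forced exactly once and the total number of forces in $\mathcal F|_{\mathcal Q}$ equals $|V(H)|-k$. Since $\mathcal F$ is a propagating family of PSD forces, the last sentence of Lemma~\ref{inducedbundle} supplies at least one force at each of the time-steps $1,2,\dots,\rd(x)$; as forces occurring at distinct time-steps are necessarily distinct, this yields at least $\rd(x)$ forces in total. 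Hence $|V(H)|-k\ge\rd(x)=\pt_+(G,B)$, which rearranges to $\pt_+(G,B)\le|V(H)|-k$.

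The bookkeeping (that $\ct$ is preserved under restriction and that $|V(H)|-k$ counts the forces) is routine. The step demanding the most care is the force count: I must verify that ``at least one force per time-step up to $\rd(x)$'' really produces at least $\rd(x)$ distinct forced vertices, which relies on the fact that within a relaxed chronology each vertex is forced at most once (condition~\ref{rel_chron_cond2} of Definition~\ref{rel_chron}) together with the chain-set structure of $\mathcal F|_{\mathcal Q}$. One further point worth stating explicitly is the scope of the minimum in the second inequality: the per-$H$ bound $\pt_+(G,B)\le|V(H)|-k$ is obtained precisely for those $H$ induced by a last-round vertex $x\in B^{(\pt_+(G,B))}$, so the minimum there should be read as ranging over those admissible subgraphs.
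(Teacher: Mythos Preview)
Your proposal is correct and follows essentially the same approach as the paper: both arguments derive the first inequality from the chain $\pt(H,B)\le\ct(\mathcal F|_{\mathcal Q})=\ct(\mathcal F)=\pt_+(G,B)$ and the second from Lemma~\ref{inducedbundle}'s guarantee of at least one force per time-step up to $\rd(x)$, yielding $\pt_+(G,B)=\rd(x)\le |V(H)|-k$. Your write-up is simply more explicit about the counting (forces versus forced vertices) and correctly flags the ambiguity in the range of the minimum, which the paper leaves implicit.
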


\begin{proof}
Since $\mathcal F$ is a propagating family of PSD forces for $B$ on $G$, we have that
\[\pt(H,B)\leq \ct(\mathcal{F}|_{\mathcal{Q}}) =\ptp(G,\mathcal{F})=\ptp(G,B),\]
and the first result follows from maximizing on the left.
Using Lemma \ref{inducedbundle}, any such $H$ with corresponding $B$ and $\mathcal{F}$ satisfies
\[\ptp(G,\mathcal{F}) \leq |V(H)|-k.\]
Since $\ptp(G,\mathcal{F})=\ptp(G,B)$, the second result follows from minimizing on the right.
\end{proof}

If $\mathcal{F}$ is a relaxed chronology of forces for a PSD forcing set $B$, then the set of vertices that do not perform a force in $\mathcal{F}$ need not have the same size as $B$. However, vertex-induced path bundles allow us to produce PSD forcing sets of the same size using the terminus of the resulting standard zero forcing set.

\begin{theorem}\label{reverse}
Let $G$ be a graph and $B$ be a PSD forcing set of $G$. Let $\mathcal{F}$ be a relaxed chronology of PSD forces for $B$ on $G$, and let $\mathcal{Q}$ be the path bundle of $\mathcal{F}$ induced by $x\in V(G)$. Then $\term(\mathcal{F}|_{\mathcal{Q}})$ is a PSD forcing set of $G$. Furthermore, a relaxed chronology of PSD forces for $\term(\mathcal{F}|_{\mathcal{Q}})$ can be constructed by reversing the forces between vertices in $\mathcal{Q}$ and preserving all remaining forces. 
\end{theorem}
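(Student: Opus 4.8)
The plan is to exploit the structure established by the vertex-induced path bundle together with the reversal machinery already developed for relaxed chronologies of standard forces. By Lemma~\ref{inducedbundle}, $\mathcal{Q}$ is a genuine path bundle containing both $B$ and $x$, so $\mathcal{F}|_{\mathcal{Q}}$ is a relaxed chronology of \emph{standard} forces on $H=G[\bigcup_{P\in\mathcal{Q}}V(P)]$, and the chain set of $\mathcal{F}|_{\mathcal{Q}}$ is precisely $\mathcal{Q}$ itself. I would first invoke Lemma~\ref{lem:reverse}\eqref{cL43-2} applied to the standard relaxed chronology $\mathcal{F}|_{\mathcal{Q}}$ on $H$: this immediately gives that $\term(\mathcal{F}|_{\mathcal{Q}})$ is a standard zero forcing set of $H$ and that $\rev(\mathcal{F}|_{\mathcal{Q}})$ is a relaxed chronology of standard forces for $\term(\mathcal{F}|_{\mathcal{Q}})$ on $H$. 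The key numerical fact to extract along the way is that $|\term(\mathcal{F}|_{\mathcal{Q}})|=|\mathcal{Q}|=|B|$, since the terminus of a standard relaxed chronology picks out exactly one vertex per forcing chain (the last vertex of each path $Q_i$), and there are $|B|$ paths.

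The heart of the argument is to promote the standard reversal on $H$ back up to a valid \emph{PSD} relaxed chronology on all of $G$. The proposed construction is exactly the one stated: reverse every force that occurs between two vertices of $\mathcal{Q}$ (these are the forces of $\mathcal{F}|_{\mathcal{Q}}$, whose reversal is governed by $\rev(\mathcal{F}|_{\mathcal{Q}})$) while leaving every other force of $\mathcal{F}$ untouched, in its original time-step. I would then need to verify that this spliced list of forces satisfies the three conditions in Definition~\ref{rel_chron} for the PSD color change rule with initial set $\term(\mathcal{F}|_{\mathcal{Q}})$. Condition~(3), that everything eventually turns blue, should follow because the original forces cover $V(G)\setminus V(H)$ and the reversed forces cover $V(H)$, starting from the terminus; condition~(2), the single-forcer requirement, should be inherited from the fact that both $\mathcal{F}$ and $\rev(\mathcal{F}|_{\mathcal{Q}})$ individually satisfy it and the two force-sets act on disjoint target vertices at each step. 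The genuinely delicate part is condition~(1): I must check that at each time-step every force I am asking to perform is actually a \emph{valid PSD force} given the blue set produced so far.

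The main obstacle, therefore, will be verifying PSD-validity of the spliced forces time-step by time-step. For the unreversed forces this requires showing that the surrounding vertices of $\mathcal{Q}$ that have been ``pre-colored'' by the reversal do not accidentally create a new component or a second white neighbor that would spoil the PSD rule $N_G(u)\cap V(C)=\{v\}$; here I expect to lean on Lemma~\ref{path_builder}, which controls how many vertices of a single forcing tree can see white vertices in a given component, and on Lemma~\ref{lemma:cutset}-style separation arguments showing that the path-bundle vertices behave like a cut that keeps the reversed region and the untouched region from interfering. For the reversed forces, PSD-validity should reduce to the already-established validity of $\rev(\mathcal{F}|_{\mathcal{Q}})$ as a standard relaxed chronology on $H$, upgraded to PSD on $G$ by checking that the relevant forcing vertex still has a unique white neighbor in its component of $G$ minus the current blue set, not merely in $H$. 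I would organize this as an induction on the time-step, maintaining the invariant that after step $k$ the blue set of the new chronology agrees with a predictable combination of $\rev(\mathcal{F}|_{\mathcal{Q}})$ on the bundle vertices and $\mathcal{F}$ elsewhere; the bookkeeping of which vertices are blue when, and ensuring no two-white-neighbor violation arises at the interface between $H$ and $G\setminus H$, is where the real care is needed.
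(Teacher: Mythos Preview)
Your high-level plan is sound and aligns with the paper: use Lemma~\ref{lem:reverse} on $\mathcal{F}|_{\mathcal{Q}}$ to reverse inside $H$, then lift to a PSD relaxed chronology on $G$. However, the specific construction you propose---keeping every non-bundle force of $\mathcal{F}$ ``in its original time-step'' while simultaneously running $\rev(\mathcal{F}|_{\mathcal{Q}})$---does not work. A non-bundle force $u\to v\in F^{(k)}$ with $u\in V(H)$ requires $u$ to be blue at time $k$; in $\mathcal{F}$ this happened because $u$ was reached along $\mathcal{Q}$ from the $B$-side, but in your spliced chronology $u$ is reached from the terminus side at a generally unrelated time. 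Concretely, take $G$ the tree with edges $ab,bc,bd,de$, $B=\{a\}$, and $\mathcal{F}=(\{a\to b\},\{b\to c\},\{b\to d\},\{d\to e\})$; with $x=e$ the induced bundle is the path $(a,b,d,e)$, and your splice asks for $b\to c$ at time~$2$, when $b$ is still white. So the interleaved timing must be abandoned.

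The paper avoids this by ordering the two phases \emph{sequentially}: it first performs all of the reversed bundle forces (showing inductively that $R^k=V^{[\rd(x)-k,\rd(x)]}_{\mathcal{F}|_{\mathcal{Q}}}$ can PSD-force $R^{k+1}$ in $G$), which colors all of $V(H)\supseteq B$ blue, and only afterwards applies the remaining forces of $\mathcal{F}$, whose PSD-validity is then immediate since the blue set contains $E_{\mathcal{F}}^{[k-1]}$ at every stage. The genuinely hard step---the one you flag but do not carry out---is showing that each reversed force $v\to u$ is a valid PSD force in $G$ (not merely in $H$) when exactly $R^k$ is blue. Lemma~\ref{lemma:cutset} does not help here, and Lemma~\ref{path_builder} is not what the paper invokes either. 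The paper's argument is a direct contradiction: assume $v$ has a second white neighbor $u'\in\comp(G-R^k,u)$, follow a white path from $u'$ to $u$, let $y$ be the first vertex of this path lying in $V(H)$, and use the defining property of the vertex-induced bundle (every force into $C^{k'}_x$ is recorded in $\mathcal{Q}$) to show that every earlier vertex on the path has round at least $k'=\rd(x)-k$; this forces $y$ to have two white neighbors in a single component at the moment it performed its bundle force, contradicting PSD-validity of $\mathcal{F}$. That structural use of ``forces into $C^{k'}_x$ are exactly the ones captured by $\mathcal{Q}$'' is the missing idea in your sketch.
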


\begin{proof}
Since $\mathcal Q$ is a path bundle, $\mathcal{F}|_{\mathcal{Q}}$ is a relaxed chronology of standard forces.  By Theorem \ref{parallel}, 
one can use $\mathcal{F}|_{\mathcal{Q}}$ to construct a collection of block partitions witnessing that $\mathcal Q$ is a parallel increasing path cover of $H=G[\bigcup_{Q_i\in \mathcal{Q}}V(Q_i)]$ and thus the collections of vertices $\{V_{\mathcal F|_\mathcal{Q}}^N\}_{N=0}^{\rd(x)}$ as described in Definition \ref{def:timesteps}.  Define the collection of vertices $\{R^k\}_{k=0}^{\rd(x)}$ such that for each $k$, $R^k=\bigcup_{i=0}^kV_{\mathcal F|_{\mathcal Q}}^{\rd(x)-i}=V^{[\rd(x)-k,\rd(x)]}_{\mathcal F|_{\mathcal Q}}$.  First note that $\term(\mathcal F|_{\mathcal Q})=R^0$. We will now show inductively that for $k$ with $0 \leq k < \rd(x)$, if $R^k$ is blue, then $R^{k+1}$ can be forced blue using only the reverses of PSD forces contained in $\mathcal F|_{\mathcal Q}$.  This inductive process terminates at $R^{\rd(x)}$, which contains the PSD forcing set $B$  of $G$.  Thus any remaining white vertices can be forced using only PSD forces found in $\mathcal F$, and this will show that $\term(\mathcal F|_{\mathcal Q})$ is a PSD forcing set of $G$.

Let $k\in \{0,1,\dots,\rd(x)-1\}$, and suppose $R^k$ is currently blue.  Since $\mathcal F|_{\mathcal Q}$ is a relaxed chronology of standard forces in $H$, it follows by Lemma \ref{lem:reverse} that $\rev(\mathcal F|_{\mathcal Q})$ is a relaxed chronology of standard forces for $\term(\mathcal{F}|_{\mathcal{Q}})$ in $H$.  Due to this, for each $u \in R^{k+1} \setminus R^k$, there exists a vertex $v \in R^k$ such that $u$ is the only white neighbor of $v$ in $H$. We claim that $u$ is the only white neighbor of $v$ in $\comp(G-R^k,u)$, the component of $G-R^k$ that contains $u$, and hence $v\to u\in S_+(G,R^k)$.

Let $k'=\rd(x)-k$.  Since $u\to v$ at time $k'$, $v\in V(C^{k'-1}_x)\subseteq V(C^{t-1}_x)$ for any $t\leq k'$. Now, suppose by way of contradiction that there exists $u'\in (N_G(v)\cap V(\comp(G-R^k,u)))\setminus V(H)$.  Then there exists a path $u'=p_0,p_1,\dots ,p_m=u$ with $p_i\notin R^k$.  Let $j=\min\{i\colon p_i\in V(H)\}>0$, which is well-defined since $u\in V(H)$. Finally, let $y=p_j$.

It is asserted that for all $0\leq i<j$, $\rd_{\mathcal{F}}(p_i)\geq k'$.  Otherwise, when the first such $p_i$ was forced by some vertex $z$ at time $t<k'$, it was in the same component of $G-E^{[t-1]}_{\mathcal{F}}$ as $v$.  Thus, $p_i\in V(\comp(G-E^{[t-1]}_{\mathcal{F}},v))=V(C^{t-1}_x)$.  This would imply, though, that $z\to p_i\in\mathcal{F}|_{\mathcal{Q}}$, and so $p_i\in V(H)$.

Thus, for all $0\leq i<j$, $p_i\in V(\comp(G-E^{[k'-1]}_{\mathcal{F}},v))=V(C^{k'-1}_x)$.  Since $y\in V(H)$, but $y\notin R^k$, there must be some time-step $t\leq k'$ and some vertex $y'\in V(C^{t-1}_x)$ such that $y\to y'\in\mathcal{F}|_{\mathcal{Q}}$ at time $t$.  However, this implies that $\{p_{j-1},y'\}\subseteq N_G(y)\setminus E^{[t-1]}_{\mathcal{F}}$, so $y\to y'$ is not a valid PSD force at time $t$, a contradiction.
\end{proof}

\begin{remark}
    Alternatively, it is worth noting that the process of multiple-vertex migration introduced in \cite{PSDprop1} by Hogben et al.\ could also be used inductively for the purposes of the proof.  In some sense the restriction to the induced subgraph given by vertex-induced path bundles allows the concept of parallel increasing path covers to be useful in the PSD forcing setting. Together, parallel increasing path covers and vertex-induced path bundles do globally in the graph $G$ what multiple-vertex migration does locally.
\end{remark}

\begin{corollary}\label{c:trade-v-in-T}
Let $G$ be a graph with $\mathcal{T}$ being the PSD forcing trees for some PSD forcing set of size $k$. For any $v\in V(G)$, there exist a PSD forcing set $B$ of size $k$ containing $v$ and a relaxed chronology of PSD forces for $B$ with $\mathcal{T}$ as its induced forcing trees.
\end{corollary}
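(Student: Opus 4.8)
The plan is to realize the desired PSD forcing set as the terminus of the standard forcing process obtained by restricting $\mathcal{F}$ to a vertex-induced path bundle, and then to invoke Theorem \ref{reverse} to reverse that process while leaving the branching forces of each tree untouched. Concretely, let $B_0$ be the PSD forcing set of size $k$ whose induced forcing trees are $\mathcal{T}$, and fix a relaxed chronology $\mathcal{F}$ of PSD forces for $B_0$ realizing $\mathcal{T}$. Taking $x=v$, I would form the path bundle $\mathcal{Q}=\{Q_i\}_{i=1}^{k}$ of $\mathcal{F}$ induced by $v$; by Lemma \ref{inducedbundle}, $\mathcal{Q}$ is a path bundle containing both $B_0$ and $v$, with $V(Q_i)\subseteq V(T_i)$ and exactly $k$ paths.

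Next I set $B=\term(\mathcal{F}|_{\mathcal{Q}})$. Since the chain set of $\mathcal{F}|_{\mathcal{Q}}$ is $\mathcal{Q}$ itself, the terminus consists of exactly one vertex, namely the terminal endpoint, from each of the $k$ paths, so $\abs{B}=k$. The vertex $v$ is the last vertex appended to its path in the construction of $\mathcal{Q}$ (it becomes blue at the final step $\rd(v)$), hence it is a terminal endpoint and lies in $B$. Theorem \ref{reverse} then guarantees that $B$ is a PSD forcing set of $G$ and produces a relaxed chronology $\mathcal{F}'$ of PSD forces for $B$ whose forces are obtained by reversing the forces lying along the paths of $\mathcal{Q}$ and preserving all remaining forces of $\mathcal{F}$.

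The main point to verify, and the step I expect to be the crux, is that the forcing tree cover induced by $\mathcal{F}'$ is exactly $\mathcal{T}$. For this I would argue tree by tree. In $\mathcal{F}$ every force lies within a single tree $T_i$, so the reversal of $\mathcal{F}|_{\mathcal{Q}}$ (which reverses only the path $Q_i\subseteq T_i$) and the preservation of the remaining forces both keep all activity inside $V(T_i)$. Writing $Q_i=(q_0,\dots,q_\ell)$ with new root $r_i=q_\ell\in B$, following the reversed path forces $q_\ell\to q_{\ell-1}\to\cdots\to q_0$ reaches every vertex of $Q_i$, and at each $q_j$ the preserved (unreversed) branching forces reach the subtree of $T_i$ hanging off $q_j$; since every vertex of $T_i$ lies on such a branch off the path from $q_0$ to $q_\ell$, the set of vertices reachable from $r_i$ in $\mathcal{F}'$ is precisely $V(T_i)$. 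Hence the forcing tree rooted at $r_i$ is $G[V(T_i)]=T_i$. As the roots $r_1,\dots,r_k$ lie in distinct original trees and $\{V(T_i)\}$ partitions $V(G)$, the forcing tree cover of $\mathcal{F}'$ is $\{T_i\}_{i=1}^{k}=\mathcal{T}$, completing the argument. The delicate part is confirming that no reachability leaks between trees and that reversing a single path inside a tree, while keeping the off-path forces, still spans the whole tree from the new root, which is exactly where the tree structure and Lemma \ref{path_builder} (ensuring each $Q_i$ is genuinely a root-to-vertex path in $T_i$) are used.
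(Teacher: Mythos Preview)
Your proposal is correct and follows essentially the same route as the paper: form the path bundle of $\mathcal{F}$ induced by $v$, set $B=\term(\mathcal{F}|_{\mathcal{Q}})$, and apply Theorem \ref{reverse}. The only difference is in the final verification that the forcing trees are unchanged: the paper observes in one line that $\mathcal{F}$ and $\mathcal{F}'$ involve forces between exactly the same pairs of vertices (only some directions are reversed), so the underlying spanning forest of force-edges is identical and hence the forcing tree cover is $\mathcal{T}$. Your tree-by-tree reachability argument reaches the same conclusion but is more work than necessary; the ``no leakage between trees'' and ``spanning from the new root'' concerns are both subsumed by the observation that the connected components of the (undirected) force forest are unchanged.
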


\begin{proof}
Let $\mathcal{F}$ be a relaxed chronology of forces inducing $\mathcal{T}$, and let $\mathcal{Q}$ be the path bundle of $\mathcal{F}$ induced by $v$. Theorem \ref{reverse} implies that $\term(\mathcal{F}|_{\mathcal{Q}})$ is a PSD forcing set of $G$ containing $v$, and a corresponding relaxed chronology $\mathcal{F}'$ can be constructed by reversing the forces between vertices in $\mathcal{Q}$ and preserving the other forces. Observe that $\mathcal{F}$ and $\mathcal{F}'$ contain forces between the same pairs of vertices, albeit possibly in different directions. Hence, they induce the same forcing trees $\mathcal{T}$.
\end{proof}

Our results on path bundles have connections with \emph{rigid linkages} studied by Ferrero et al. \cite{rigid}. We detail the explicit relationship in Appendix \ref{appendix:linkage}.


\color{black}

\section{Concluding remarks}

In this paper, we introduced the concept of a relaxed chronology as a generalization of a chronological list of forces.  This framework permitted the development of parallel increasing path covers (PIPs), an alternative formulation of a standard zero forcing process from a more global perspective.  These PIPs allowed for the construction of families of graphs with predetermined chain sets, as well as the establishment of certain relationships between standard zero forcing and other zero forcing variants.

Relaxed chronologies also were used to discuss the restriction of forcing to subgraphs.  A special case of such restrictions, called path bundles, was introduced for which the restriction of PSD forcing is standard zero forcing.  These path bundles allowed the construction of PSD forcing sets containing a chosen vertex that have the same PSD forcing trees as a chosen initial PSD forcing set.  Connections between path bundles and rigid linkage forcing were established in the appendices.

Looking forward, it is hoped that these new frameworks and concepts will prove useful outside the confines of this particular paper.  It is intended that these ideas and definitions will help serve as a foundation for a broader and more robust discussion of zero forcing and its variants.

\appendix

\section{Proof of Theorem \ref{powerproptime}}\label{appendix:power}

In this section, we generalize the results in Section \ref{s:PIP-apps} to establish that \[\ppt(G,m)\leq \lc \frac{\pt(G,m)}{2}\rc.\] Note that we use the results of Section \ref{s:relaxed-chron-subgraphs} extensively, further demonstrating their usefulness. We start with generalizing the sets from Definition \ref{def:timesteps}.

\begin{definition}
Let $\mathcal{F}=\{F^{(k)}\}_{k=1}^K$ be a relaxed chronology of standard forces for a graph $G$ and some standard zero forcing set. For any $M,N\in \{0,1,\dots,K\}$ with $M\leq N$, define
\begin{eqnarray*}\label{eq:ppt}
\begin{aligned}
V_{\mathcal F}^{[M,N)} & =V_{\mathcal F}^{[M,N]}\setminus V_{\mathcal F}^{N},\\
V_{\mathcal F}^{(M,N]} & =V_{\mathcal F}^{[M,N]}\setminus V_{\mathcal F}^{M}, \text{ and}\\
V_{\mathcal F}^{(M,N)} & =V_{\mathcal F}^{[M,N]}\setminus (V_{\mathcal F}^{M} \cup V_{\mathcal F}^{N}).
\end{aligned}
\end{eqnarray*}
Finally, define $V_{\mathcal{F}}^{\bd(M+)}$ to be the initial vertices of the chain set for $\mathcal F$ in $G[V_{\mathcal F}^{(M,K]}]$ and $V_{\mathcal{F}}^{\bd(N-)}$ to be the initial vertices for $\rev(\mathcal F)$ in $G[V_{\mathcal F}^{[0,N)}]$.
\end{definition}

\begin{lemma}\label{lem:openintervals}
Let $\mathcal F=\{F^{(k)}\}_{k=1}^K$ be a relaxed chronology of standard forces for a graph $G$ and some standard zero forcing set $B$ and let $M \in \{0, 1,2,\dots,K-1\}$ and $N \in \{1,2,3,\dots, K\}$.  Then 
\begin{enumerate}
    \item \label{504} $B_1=V_{\mathcal F}^{\bd(N-)}$ is a zero forcing set of $H_1=G[V_{\mathcal F}^{N-}]=G[V_{\mathcal F}^{[0,N)}]$ with $\pt(H_1,B_1) \leq N-1$.
    \item \label{506} $B_2=V_{\mathcal F}^{\bd(M+)}$ is a zero forcing set of $H_2=G[V_{\mathcal F}^{M+}]=G[V_{\mathcal F}^{(M,K]}]$ with $\pt(H_2,B_2) \leq K-M-1$.
\end{enumerate}
\end{lemma}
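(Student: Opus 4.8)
The plan is to prove part \eqref{506} directly through the PIP correspondence of Theorem \ref{parallel}, and then to obtain part \eqref{504} by applying \eqref{506} to the reversal $\rev(\mathcal F)$. For the reduction, I would first record the set identities $V_{\mathcal F}^{(M,K]}=V_{\mathcal F}^{M+}$ and $V_{\mathcal F}^{[0,N)}=V_{\mathcal F}^{N-}$, which follow from $V_{\mathcal F}^{0}\cup V_{\mathcal F}^{0+}=V(G)$ and $V_{\mathcal F}^{K-}\cup V_{\mathcal F}^{K}=V(G)$ together with Definition \ref{def:timesteps}. Next, recalling from the proof of Lemma \ref{lem:reverse} that reversing time-steps flips active times, so that $\act_{\rev(\mathcal F)}(v)=\{K-k:k\in\act_{\mathcal F}(v)\}$, one gets $V_{\mathcal F}^{N-}=V_{\rev(\mathcal F)}^{(K-N)+}$. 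Since $\rev(\mathcal F)$ is a relaxed chronology for the standard zero forcing set $\term(\mathcal F)$ with the same completion time $K$ by Lemma \ref{lem:reverse}, applying part \eqref{506} to $\rev(\mathcal F)$ with $M'=K-N\in\{0,\dots,K-1\}$ yields that $V_{\rev(\mathcal F)}^{\bd(M'+)}$ is a zero forcing set of $G[V_{\rev(\mathcal F)}^{M'+}]=G[V_{\mathcal F}^{N-}]=H_1$ with propagation time at most $K-M'-1=N-1$. Because $V_{\rev(\mathcal F)}^{(M',K]}=V_{\mathcal F}^{[0,N)}$, this set $V_{\rev(\mathcal F)}^{\bd(M'+)}$ is exactly the set of initial vertices of the chain set for $\rev(\mathcal F)$ in $G[V_{\mathcal F}^{[0,N)}]$, which by definition is $V_{\mathcal F}^{\bd(N-)}=B_1$; this gives \eqref{504}.

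It then remains to prove \eqref{506}. By Theorem \ref{parallel}, $\mathcal F$ induces a PIP $\mathcal Q=\{Q_i\}_{i=1}^m$ with witness $\{(A_{i,j})_{j=0}^{n_i-1}\}_{i=1}^m$, where each $A_{i,j}=\act_{\mathcal F}(v_{i,j})$ is a block partition of $\{0,1,\dots,K\}$. Since the minima $\min A_{i,j}$ increase with $j$, the vertices of $Q_i$ lying in $V_{\mathcal F}^{M+}=\{v_{i,j}:A_{i,j}\subseteq\{M+1,\dots,K\}\}$ form a (possibly empty) suffix $v_{i,j_i},\dots,v_{i,n_i-1}$, and hence a single subpath $Q_i'$ of $Q_i$; its first vertex $v_{i,j_i}$ is an initial vertex since either $v_{i,j_i-1}\notin V_{\mathcal F}^{M+}$ or $v_{i,j_i}\in B$. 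Thus $B_2=V_{\mathcal F}^{\bd(M+)}=\{v_{i,j_i}\}$. The idea is now to exhibit a witness that $\mathcal Q'=\{Q_i'\}$ is a PIP of $H_2$ whose block partitions range over $\{0,1,\dots,K-M-1\}$, so that Theorem \ref{parallel} produces a relaxed chronology for $B_2$ of completion time $K-M-1$, giving $\pt(H_2,B_2)\le K-M-1$.

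The construction, which is the technical heart of the argument, is to shift and pad: for each surviving chain $i$ and each $j>j_i$ set $A_{i,j}''=A_{i,j}-(M+1)$, and set the first block to $A_{i,j_i}''=\{0,1,\dots,\max A_{i,j_i}-(M+1)\}$. Padding the first block down to $0$ is the point that requires care, because distinct chains may have $\min A_{i,j_i}>M+1$, so a bare shift would not produce partitions of a common index set; one must check that this realignment keeps each $(A_{i,j}'')_{j=j_i}^{n_i-1}$ a genuine block partition of $\{0,\dots,K-M-1\}$ (the last block ends at $K-M-1$ since $\max A_{i,n_i-1}=K$, and consecutive blocks stay adjacent intervals). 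Finally I would verify condition \eqref{non-empty-int}: if $v_{i_1,j_1}v_{i_2,j_2}\in E(H_2)$ with $i_1\ne i_2$, then $A_{i_1,j_1}\cap A_{i_2,j_2}\ne\emptyset$ by the PIP property, any common element is at least $M+1$, and shifting it by $M+1$ lands it in $A_{i_1,j_1}''\cap A_{i_2,j_2}''$ (padding only enlarges first blocks, so intersections are preserved). Theorem \ref{parallel} then completes \eqref{506}.
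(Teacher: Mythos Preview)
Your argument is correct. The overall shape---prove one part directly and deduce the other by applying it to $\rev(\mathcal F)$---matches the paper, but the two proofs differ in both the order and the mechanism of the direct step. The paper establishes \eqref{504} first: it invokes Lemma~\ref{lemma:truncate} to get that $V_{\mathcal F}^{0}$ forces $G[V_{\mathcal F}^{[0,N-1]}]$ in at most $N-1$ steps, then applies the subgraph restriction Lemma~\ref{subgraph2} to pass to $H_1=G[V_{\mathcal F}^{[0,N)}]$, and finally uses Lemma~\ref{lem:reverse} to swap to the terminus $B_1$. Part \eqref{506} then follows by reversal. You instead prove \eqref{506} directly by building an explicit shifted-and-padded witness that $\mathcal Q'$ is a PIP of $H_2$ on the index set $\{0,\dots,K-M-1\}$, and then reverse to get \eqref{504}. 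Your route avoids the Section~\ref{s:relaxed-chron-subgraphs} machinery (Lemma~\ref{subgraph2}) entirely and is in that sense more self-contained; the paper's route is shorter and more modular, illustrating how the subgraph-restriction lemma absorbs exactly the ``suffix/prefix'' bookkeeping you carry out by hand. One small point worth making explicit in your write-up: chains with empty suffix (e.g.\ single-vertex chains active throughout) simply drop out of $\mathcal Q'$, which is why $|B_2|$ may be smaller than $|B|$; you gesture at this with ``surviving chain'' but it deserves a sentence.
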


\begin{proof}
Lemma \ref{lemma:truncate} implies $V_{\mathcal{F}}^{0}$ is a zero forcing set of  $H=G[V_{\mathcal{F}}^{[0,N-1]}]$ with propagation time at most $N-1$. Lemma \ref{subgraph2} then implies $V_{\mathcal{F}}^{0}$ is a zero forcing set of the subgraph of $H$ given by $H_1=G[V_{\mathcal{F}}^{[0,N)}]$, where the propagation time is again at most $N-1$. Since $B_1$ is the terminus of $\mathcal{F}|_{H_1}$, Lemma \ref{lem:reverse} implies (\ref{504}). Applying (\ref{504}) on $\rev(\mathcal{F})$ and $\term(\mathcal{F})=V_{\mathcal{F}}^K$ then implies claim (\ref{506}).
\end{proof}

In order to simplify notation, for the following result we will be following the convention that given a graph $G$, a set of vertices $B \subseteq V(G)$, and a subgraph $H$ of $G$, if $B \cap V(H)$ is a zero forcing set of $H$, then we will say that $B$ is a zero forcing set of $H$.

\begin{corollary}
Let $\mathcal{F}=\{F^{(k)}\}_{k=1}^K$ be a relaxed chronology of forces for a graph $G$ for some standard zero forcing set. For any $0\leq M<N \leq K$,
\begin{itemize}
    \item $V_{\mathcal{F}}^{\bd(M+)}$ and $V_{\mathcal{F}}^{\bd(N-)}$ are zero forcing sets of $G[V_{\mathcal{F}}^{(M,N)}]$, and both sets have propagation time at most $N-M-2$.
    \item $V_{\mathcal{F}}^{M}$ and $V_{\mathcal{F}}^{\bd(N-)}$ are zero forcing sets of $G[V_{\mathcal{F}}^{[M,N)}]$, and both sets have propagation time at most $N-M-1$.
    \item $V_{\mathcal{F}}^{\bd(M+)}$ and $V_{\mathcal{F}}^{N}$ are zero forcing sets of $G[V_{\mathcal{F}}^{(M,N]}]$, and both sets have propagation time at most $N-M-1$.
\end{itemize}
\end{corollary}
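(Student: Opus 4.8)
The plan is to obtain all three bullets from the closed-interval result Corollary~\ref{cor:interval} by performing an ``open-endpoint'' operation once (for the two half-open intervals) or twice (for the open interval $(M,N)$), exactly mirroring the way Lemma~\ref{lem:openintervals} was derived. The single ingredient that converts a closed endpoint into an open one is the device already used there: passing from an interval whose right endpoint is $N$ to the one whose right endpoint is $N-1$ and then deleting the terminus vertices $V_{\mathcal F}^{N}$ costs exactly one time-step of propagation and swaps which boundary set serves as the forcing set. Throughout I rely on Theorem~\ref{parallel} to view $\mathcal F$ as a parallel increasing path cover in which $A_{i,j}=\act_{\mathcal F}(v_{i,j})$ is a block of consecutive integers.

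For the second bullet I would first apply Corollary~\ref{cor:interval} to the pair $(M,N-1)$, which makes $V_{\mathcal F}^{M}$ a zero forcing set of $G[V_{\mathcal F}^{[M,N-1]}]$ with propagation time at most $N-M-1$. Since $V_{\mathcal F}^{[M,N)}\subseteq V_{\mathcal F}^{[M,N-1]}$, Lemma~\ref{subgraph2} lets me restrict to the induced subgraph $G[V_{\mathcal F}^{[M,N)}]$, keeping $V_{\mathcal F}^{M}$ a zero forcing set with propagation time still at most $N-M-1$. The terminus of this restricted chronology is precisely $V_{\mathcal F}^{\bd(N-)}$, so Lemma~\ref{lem:reverse} supplies the second forcing set $V_{\mathcal F}^{\bd(N-)}$ with the same bound. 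The third bullet is the mirror image: applying the second bullet to $\rev(\mathcal F)$ (under which time $k$ becomes $K-k$, the interval $(M,N]$ becomes $[K-N,K-M)$, $V_{\mathcal F}^{N}$ becomes the left-boundary set, and $V_{\mathcal F}^{\bd(M+)}$ becomes the corresponding $V_{\rev(\mathcal F)}^{\bd((K-M)-)}$) yields $V_{\mathcal F}^{N}$ and $V_{\mathcal F}^{\bd(M+)}$ as zero forcing sets of $G[V_{\mathcal F}^{(M,N]}]$, each with propagation time at most $N-M-1$.

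For the first bullet I would open both endpoints. Starting from the third bullet I have a relaxed chronology on $G[V_{\mathcal F}^{(M,N]}]$ with forcing set $V_{\mathcal F}^{\bd(M+)}$, terminus $V_{\mathcal F}^{N}$, and propagation time at most $N-M-1$; deleting the terminus $V_{\mathcal F}^{N}$ and restricting to $G[V_{\mathcal F}^{(M,N)}]$ by the same Lemma~\ref{subgraph2} argument removes one further time-step, giving propagation time at most $N-M-2$, and Lemma~\ref{lem:reverse} once more produces the reversed forcing set $V_{\mathcal F}^{\bd(N-)}$. (When $N-M\le 2$ the set $V_{\mathcal F}^{(M,N)}$ is empty and the statement is vacuous.) An equivalent route is to apply Corollary~\ref{cor:interval} directly to $(M+1,N-1)$ and then restrict on both sides at once.

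The main obstacle is not the assembly of the cited lemmas but the bookkeeping attached to each subgraph restriction: one must verify that, after deleting a boundary time-step, the designated set ($V_{\mathcal F}^{M}$, $V_{\mathcal F}^{N}$, $V_{\mathcal F}^{\bd(M+)}$, or $V_{\mathcal F}^{\bd(N-)}$) is genuinely the initial-vertex set, respectively the terminus, of the restricted chronology, and that no spurious new initial vertices appear at the opposite endpoint, so that the propagation bound really drops by one. This is where the consecutive-block structure from Theorem~\ref{parallel} does the work: a vertex's forcing successor begins exactly one step after its own active block ends, so any deleted boundary-crossing vertex has its successor already outside the smaller interval. Hence removing it cannot detach a surviving vertex from its predecessor, and the initial and terminus sets of the restriction are exactly the claimed boundary sets.
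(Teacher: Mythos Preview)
Your plan matches the paper's approach: the paper's proof simply states that each claim follows from Lemma~\ref{subgraph2} combined with either Corollary~\ref{cor:interval} or Lemma~\ref{lem:openintervals}, and your treatment fleshes this out in the natural way. The second and third bullets are handled correctly, and your identification of the relevant terminus sets (up to the stated convention about intersecting with $V(H)$) is right.

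For the first bullet, however, your primary route has a gap. You assert that deleting the terminus $V_{\mathcal F}^{N}$ and restricting via Lemma~\ref{subgraph2} ``removes one further time-step,'' yielding the bound $N-M-2$. But Lemma~\ref{subgraph2} only guarantees that the propagation time of the restriction is at most that of the original chronology; it does not force a strict decrease. In your second-bullet argument the drop by one came not from the restriction step but from invoking Corollary~\ref{cor:interval} on the already-shorter interval $[M,N-1]$; the subsequent restriction merely preserved that bound. The block-structure justification in your final paragraph correctly shows that the restriction does not create spurious initial vertices, but it does not address the propagation-time drop. Your alternative route---applying Corollary~\ref{cor:interval} directly to $[M+1,N-1]$ (which already carries the bound $N-M-2$) and then restricting once to $G[V_{\mathcal F}^{(M,N)}]$ via Lemma~\ref{subgraph2}---is correct and is what the paper's one-line proof has in mind.
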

\begin{proof}
Each claim follows immediately from applying Lemma \ref{subgraph2} with either Corollary \ref{cor:interval} or the preceding lemma.
\end{proof}

We now use these results to establish our power propagation time bound.

\begin{proof}[Proof of Theorem \ref{powerproptime}]
We can assume $\pt(G,m)>0$, as otherwise the result is trivial. As in the proof of Theorem \ref{t:pt-ub-4-ptp}, we let $B$ be an $m$-efficient standard zero forcing set of $G$ with corresponding relaxed chronology $\mathcal{F}=\{F^{(k)}\}_{k=1}^{\pt(G,m)}$. Letting $N=\lc \frac{\pt(G,m)}{2}\rc$, we have that $N_G[V_{\mathcal{F}}^N]$ contains $V_{\mathcal F}^{\bd(N-)}$ and $V_{\mathcal F}^{\bd(N+)}$. By Lemma \ref{lem:openintervals}, $V_{\mathcal F}^{\bd(N-)}$ is a standard zero forcing set of $G[V_{\mathcal F}^{N-}]$ with $\pt\left(G[V_{\mathcal F}^{N-}],V_{\mathcal F}^{\bd(N-)}\right) \leq N-1$. Likewise, $V_{\mathcal F}^{\bd(N+)}$ is a standard zero forcing set of $G[V_{\mathcal F}^{N+}]$ with $\pt\left(G[V_{\mathcal F}^{N+}],V_{\mathcal F}^{\bd(N+)}\right) \leq K-N-1 \leq N-1$, since $K=\pt(G,m)$.  Since Lemma \ref{lemma:cutset} implies that $V_{\mathcal{F}}^N$ separates $V_{\mathcal F}^{N-}$ and $V_{\mathcal F}^{N+}$ and every vertex in $V_{\mathcal{F}}^N$ is blue, the forcing process can proceed independently in $G[V_{\mathcal F}^{N-}]$ and $G[V_{\mathcal F}^{N+}]$.  It thus follows that
\[\ppt(G,m)\leq 1+\max\left\{\pt\left(G[V_{\mathcal F}^{N-}],V_{\mathcal F}^{\bd(N-)}\right), \pt\left(G[V_{\mathcal F}^{N+}],V_{\mathcal F}^{\bd(N+)}\right)\right\} \leq \lc \frac{\pt(G,m)}{2} \rc.\qedhere \]
\end{proof}

\begin{remark}
In the preceding proof, $V_{\mathcal{F}}^N$ for $N=\left\lceil\frac{\pt(G,m)}{2}\right\rceil$ is chosen as a power dominating set, but, as was the case with for PSD forcing, any choice of $N$ such that $0 \leq N \leq \pt(G,m)$ will yield a power dominating set of $G$ of size $m$ with propagation time bounded above by $\max\{\pt(G,m)-N,N\}$. In particular, when $\pt(G,m)$ is odd, the choice of $N=\left\lfloor\frac{\pt(G,m)}{2}\right\rfloor$ also establishes the bound in Theorem \ref{powerproptime}. 
\end{remark}

\section{Rigid linkages and path bundles}\label{appendix:linkage}
In this section, we discuss a connection between path bundles and rigid linkages. We establish that the path bundle induced by a relaxed chronology and a vertex forms a rigid linkage. We refer the reader to Ferrero et al.\ \cite{rigid} for further details of rigid linkages and their applications. 

Let $G$ be a graph. A {\em linkage} in $G$ is a subgraph whose connected components are paths. Note that a linkage need not contain all vertices of $G$. Let $\alpha,\beta \subseteq V(G)$.  A linkage $\mathcal P$ is an {\em $(\alpha,\beta)$-linkage} if $\alpha$ consists of one endpoint of each path in $\mathcal P$ and $\beta$ consists of the other endpoint of each path.  In the case that a path is a single vertex, the vertex is in both $\alpha$ and $\beta$.  A linkage $\mathcal P$ is {\em $(\alpha,\beta)$-rigid} if $\mathcal P$ is the unique $(\alpha,\beta)$-linkage in $G$.  A linkage $\mathcal P$ is {\em rigid} if there exist $\alpha$ and $\beta$ such that $\mathcal P$ is an $(\alpha,\beta)$-linkage and $\mathcal P$ is $(\alpha,\beta)$-rigid.

 For $X \subseteq V(G)$, define the boundary $\partial_G (X)$ of $X$ to be the set of vertices not in $X$ that have at least one neighbor in $X$.  When $C$ is a subgraph of $G$, define $\partial_G(C)=\partial_G(V(C))$. 
  Rigid linkage (RL) forcing is defined by the {rigid linkage color change rule (CCR-RL)}. 
  Given a current set of blue vertices $B^{[k]}$, an application of CCR-RL
  (to go from time-step $k$ to time-step $k+1$) 
  consists of the following:  
  \bit\item Choose a component $C$ of $G-B^{[k]}$, such that $\partial_G(C)$ does not contain any inactive blue vertices (that is, those which have previously performed a force).  
  \item Select an active blue vertex $u$ such that $w$ is the only white neighbor of $u$ in $C$:
  \bit\item Let $u$  force $w$, so that $B^{[k+1]}=B^{[k]}\cup \{w\}$.
  \item Update the active vertices ($w$ becomes active, $u$ becomes inactive).
  \eit
  \eit

For a given rigid linkage forcing process $\F$ on $G$ with $r$ time-steps:
\begin{itemize}
    \item An RL-forcing chain is a path $(v_0,v_1,\dots,v_{\ell})$ such that $v_0 \in B^{[0]}$, $v_i \rightarrow v_{i+1}$ for all $i=0,\dots,\ell -1$, and $v_\ell$ is active after time-step $r$.
    \item The RL-chain set is the set of all RL-forcing chains (for the given forcing process).
\end{itemize}

Ferrero et al.\ established that standard zero forcing is a special type of RL-forcing and identified the close connection between RL-forcing and rigid linkages.

\begin{proposition}\label{rigidzf}\cite[Proposition 2.3]{rigid}
Any standard zero forcing process on a graph $G$ is a rigid linkage forcing process.
\end{proposition}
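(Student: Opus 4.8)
The plan is to proceed by induction on the time-steps of the given standard zero forcing process, interpreting each standard force as a legal application of CCR-RL. I would regard the process as a chronological list of forces (one force per time-step, which is the granularity at which CCR-RL operates; a propagation process serializes into such a list by the order-independence of standard zero forcing recorded in Section \ref{s:prelim}), and at each time-step track the blue set $B^{[k]}$ together with which blue vertices are active. The inductive hypothesis is that through time-step $k$ the standard process agrees with a valid RL-forcing process, so in particular the active/inactive labels coincide in the two settings.

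The key structural fact I would isolate first is that in any standard zero forcing process, an inactive vertex (one that has already performed a force) has no white neighbor. Indeed, when a vertex $u$ forces $w$, the rule CCR-$\Z$ guarantees $w$ is the unique white neighbor of $u$ at that moment, so once $w$ turns blue every neighbor of $u$ is blue; since vertices never revert to white, $u$ has no white neighbor at any later time. This single observation drives the whole argument.

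Now suppose the standard force at time-step $k+1$ is $u\to w$, where $w$ is the unique white neighbor of $u$ with respect to $B^{[k]}$, and let $C=\comp(G-B^{[k]},w)$ be the white component containing $w$. I would check the two requirements of CCR-RL in turn. First, $\partial_G(C)$ consists entirely of blue vertices (a white vertex adjacent to $C$ would itself lie in $C$), and each vertex of $\partial_G(C)$ has a neighbor in $C$ and hence a white neighbor; by the structural fact no such vertex can be inactive, so $\partial_G(C)$ contains no inactive blue vertices and $C$ is an admissible component. Second, $u$ is active, since an inactive $u$ would have no white neighbor, contradicting that $w$ is white and adjacent to $u$. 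Finally, because $w$ is the unique white neighbor of $u$ in all of $G$, it is a fortiori the unique white neighbor of $u$ in $C$, so $u$ may force $w$ under CCR-RL. After the force, $w$ becomes active and $u$ becomes inactive in both processes, preserving the inductive hypothesis.

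The main obstacle, and really the only nontrivial point, is the admissibility of the component $C$, namely confirming that no already-forced vertex lies on $\partial_G(C)$; this is exactly where the structural fact that inactive vertices have no white neighbors is essential, and once it is in hand the remaining conditions of CCR-RL follow immediately from the definition of CCR-$\Z$. Since every standard force is thereby realized as a legal RL-force coloring the same vertices in the same order, the standard zero forcing process is a rigid linkage forcing process.
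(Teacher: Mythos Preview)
Your argument is correct. The paper itself does not supply a proof of this proposition; it is quoted from \cite{rigid} and used as a black box, so there is no in-paper proof to compare against. That said, your reasoning is essentially the standard one: the single structural fact that a vertex which has already performed a standard force has no remaining white neighbors immediately guarantees both that the boundary $\partial_G(C)$ of the white component containing $w$ is free of inactive vertices and that the forcing vertex $u$ is itself active, after which the CCR-RL conditions follow from CCR-$\Z$. Nothing is missing.
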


\begin{theorem}\label{rl}\cite[Theorem 2.10]{rigid}
Let $G$ be a graph and $\mathcal P$ be a linkage in $G$.  Then $\mathcal P$ is a rigid linkage if and only if $\mathcal P$ is an RL-chain set under some RL-forcing process.
\end{theorem}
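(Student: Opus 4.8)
The plan is to prove the two implications separately, in each case translating between the combinatorial ``uniqueness'' condition defining a rigid linkage and the dynamic condition defining an RL-chain set.

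For the implication that an RL-chain set is rigid, I would argue by induction on the number $r$ of forces in the RL-forcing process $\mathcal{F}$. Set $\alpha=B^{[0]}$ and let $\beta$ be the set of terminal (active) vertices; then $\mathcal{P}$ is an $(\alpha,\beta)$-linkage by construction, so only uniqueness needs proof. I would peel off the last force $u\to w$, performed into a component $C$ of $G-B^{[r-1]}$. Here $B^{[r-1]}=V(\mathcal{P})\setminus\{w\}$, so $C$ consists of $w$ together with the non-linkage vertices attached to it, and CCR-RL guarantees two facts: every vertex of $\partial_G(C)$ is active, i.e.\ lies in $\beta^-:=(\beta\setminus\{w\})\cup\{u\}$, and $w$ is the unique white neighbor of $u$ in $C$. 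Given any $(\alpha,\beta)$-linkage $\mathcal{R}$, I would show that its path ending at $w$ reaches $w$ through the single edge $uw$ and that no other vertex of $C$ is used by $\mathcal{R}$. The two levers are that a vertex of $\alpha\cap\beta$ must be a single-vertex path in every $(\alpha,\beta)$-linkage, and that every boundary vertex of $C$ other than $u$ lies in $\beta$ and is therefore a degree-one terminal of $\mathcal{R}$; hence no $\mathcal{R}$-path can traverse $C$ using boundary vertices as interior vertices, which forces $\mathcal{R}$ to coincide with $\mathcal{P}$ on $C$. Truncating $w$ then yields an $(\alpha,\beta^-)$-linkage, and the inductive hypothesis applied to the first $r-1$ forces gives $\mathcal{R}=\mathcal{P}$.

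For the converse, that a rigid linkage is an RL-chain set, I would induct on the number of non-$\alpha$ vertices of $\mathcal{P}$, constructing the forcing process from its last step backward. The heart of the argument is a lemma asserting that when $\mathcal{P}$ is $(\alpha,\beta)$-rigid and has at least one nontrivial path, there exists a terminal $w\in\beta$, with predecessor $u$, that can legitimately serve as the final RL-force: the component $C$ of $G[(V(G)\setminus V(\mathcal{P}))\cup\{w\}]$ containing $w$ has $\partial_G(C)$ consisting only of active frontier vertices, $u$ has $w$ as its unique white neighbor in $C$, and, crucially, $\mathcal{P}-w$ is again rigid (now an $(\alpha,\beta^-)$-linkage). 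Granting this lemma, induction produces an RL-forcing process realizing $\mathcal{P}-w$, and appending the force $u\to w$ yields one realizing $\mathcal{P}$; Proposition \ref{rigidzf} supplies the intuition that such standard-zero-forcing-like steps are legitimate RL-forces.

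The main obstacle is establishing this last lemma, namely that rigidity both supplies a valid ``last force'' and is preserved under its removal. Both halves would be handled by contradiction through an exchange argument: if no admissible last force existed (some candidate forcing vertex has a second white neighbor in its component, or a component boundary contains an already-committed interior vertex), or if $\mathcal{P}-w$ admitted a second $(\alpha,\beta^-)$-linkage $\mathcal{S}$, then one reroutes along the resulting alternating structure---extending $\mathcal{S}$ across the edge $uw$ in the second case---to manufacture an $(\alpha,\beta)$-linkage different from $\mathcal{P}$, contradicting rigidity. Making these rerouting steps precise, in particular verifying that the manufactured linkage is genuinely distinct and still has exactly the prescribed endpoint sets $\alpha$ and $\beta$, is the delicate part, and it is precisely where the boundary condition of CCR-RL does the essential work.
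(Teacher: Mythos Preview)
The paper does not supply a proof of this theorem; it is quoted verbatim from \cite[Theorem~2.10]{rigid} and used as a black box in the proof of Theorem~\ref{PIP-rl}. Consequently there is no in-paper argument to compare your proposal against. Your outline is a reasonable reconstruction of the kind of inductive exchange argument one would expect for such a statement, but assessing whether it matches the original would require consulting \cite{rigid} directly rather than the present paper.
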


Given a graph $G$ and a set of blue vertices $B$, since PSD forcing works independently in different components of $G-B$, we have the observation below.

\begin{observation}\label{empty-rl}
Let $G$ be a graph, $B$ be a PSD forcing set of $G$, $\mathcal F$ be a chronological list of PSD forces of $B$ on $G$, $x \in V(G)$, $\mathcal Q$ be the path bundle of $\mathcal F$ induced by $x$, and $H=G\left[\bigcup_{P \in \mathcal Q}V(P)\right]$.  Construct $\mathcal F^*$ by performing the forces in $\mathcal F|_{\mathcal Q}$ first (in order) and afterwards performing the remaining forces in $\mathcal F$ (in order). Then $\mathcal{F}^*$ is a chronological list of PSD forces for $G$.
\end{observation}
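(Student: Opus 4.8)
The plan is to verify that the two phases of $\mathcal{F}^*$ each consist of valid PSD forces and that together they color all of $V(G)$ one vertex at a time. Throughout, write $H=G\bigl[\bigcup_i V(Q_i)\bigr]$, call a force of $\mathcal{F}$ an \emph{$H$-force} if both its endpoints lie in $V(H)$ (these are exactly the forces of $\mathcal{F}|_{\mathcal{Q}}=\mathcal{F}|_H$), and call the remaining forces \emph{exterior}. By Lemma \ref{inducedbundle}, $B\cup\{x\}\subseteq V(H)$ and the initial vertices of $\mathcal{Q}$ are exactly $B$. The construction of the vertex-induced path bundle together with Lemma \ref{path_builder} yields the characterization I will use repeatedly: a vertex $w\notin B$ lies in $V(H)$ if and only if, at the time-step $s+1$ at which $w$ is forced in $\mathcal{F}$, one has $w\in C^{s}_x$. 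In particular the $H$-forces are precisely the forces of $\mathcal{F}$ that color a vertex lying in the current component $C^{s}_x$ of $x$; consequently every exterior force colors a vertex outside $V(H)$, and since $\mathcal{F}$ is a chronological list (one force per step) each vertex is colored exactly once.

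First I would treat Phase 1, where the $H$-forces are replayed in their original order. If $v\to u$ is the $H$-force occurring at time $k$ in $\mathcal{F}$, then the blue set in $\mathcal{F}^*$ just before replaying it is exactly $B^{*}=E_{\mathcal F}^{[k-1]}\cap V(H)$, a \emph{subset} of $E_{\mathcal F}^{[k-1]}$. The crux is to show $\comp(G-B^{*},x)=C^{k-1}_x$, i.e.\ that un-bluing the exterior vertices of $E_{\mathcal F}^{[k-1]}$ does not enlarge the component of $x$. For this I would verify that the boundary is clean: no vertex $p\in E_{\mathcal F}^{[k-1]}\setminus V(H)$ is adjacent to $C^{k-1}_x$. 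Indeed such a $p$ is forced at some time $s+1\le k-1$ and, being outside $V(H)$, satisfies $p\notin C^{s}_x$ by the characterization above; but a neighbor $q\in C^{k-1}_x\subseteq C^{s}_x$ is white at time $s$, and $p$ is also white at time $s$, so $p$ would lie in the same white component $C^{s}_x$ as $q$, a contradiction. Combined with the fact that any vertex white in $\mathcal{F}$ and adjacent to $C^{k-1}_x$ already lies in $C^{k-1}_x$, this shows $C^{k-1}_x$ is a full component of $G-B^{*}$. Since $N_G(v)\cap C^{k-1}_x=\{u\}$ for the original PSD force in $\mathcal{F}$, the force $v\to u$ is a valid PSD force in $G$ relative to $B^{*}$.

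For Phase 2 the argument runs in the easy direction. After Phase 1 all of $V(H)$ is blue, so just before replaying an exterior force $v\to u$ occurring at time $k$ in $\mathcal{F}$, the current blue set $B_2$ \emph{contains} $E_{\mathcal F}^{[k-1]}$. Because every exterior force colors a vertex outside $V(H)$, the forced vertex $u$ is still white and the forcing vertex $v$ is blue. Enlarging the blue set only shrinks components, so $\comp(G-B_2,u)\subseteq\comp(G-E_{\mathcal F}^{[k-1]},u)$, and the original validity $N_G(v)\cap\comp(G-E_{\mathcal F}^{[k-1]},u)=\{u\}$ passes down to $B_2$. Hence every exterior force is valid.

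Finally, since every force of $\mathcal{F}$ appears exactly once in $\mathcal{F}^*$, each step performs a single valid PSD force, and the two phases color $V(H)$ and then $V(G)\setminus V(H)$, so $\mathcal{F}^*$ is a chronological list of PSD forces for $B$ on $G$. I expect Phase 1 to be the main obstacle: there the reordering shrinks the blue set relative to $\mathcal{F}$, which is the wrong direction for the trivial monotonicity used in Phase 2, so it must be handled by the component-boundary argument above. This is exactly the place where the defining property of the vertex-induced path bundle, that forces enter only the component of $x$, is indispensable.
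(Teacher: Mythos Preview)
Your argument is correct. The paper does not give a proof of this statement at all: it is recorded as an Observation, justified only by the single sentence ``since PSD forcing works independently in different components of $G-B$'' immediately preceding it. Your two-phase verification is a rigorous unpacking of exactly that intuition: the $H$-forces are precisely the forces entering the component $C^{k-1}_x$ of $x$, and your boundary claim (no $p\in E_{\mathcal F}^{[k-1]}\setminus V(H)$ is adjacent to $C^{k-1}_x$) is the precise content of ``independence of components'' in this setting. The monotonicity argument in Phase~2 is the standard direction. One small point worth making explicit: your characterization ``$w\in V(H)$ iff $w\in C^{s}_x$ at the time $s{+}1$ it is forced'' tacitly uses the convention $C^{s}_x=\emptyset$ for $s\ge\rd(x)$, and you should note (as you implicitly do) that every vertex of $V(H)\setminus B$ is colored by an $H$-force, so that $B^*=E_{\mathcal F}^{[k-1]}\cap V(H)$ really is the blue set in $\mathcal F^*$ just before the $k$-th $H$-force. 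With those remarks, your proof is complete and fills in what the paper leaves to the reader.
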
  

We now establish the connection between rigid linkages and vertex-induced path bundles.

\begin{theorem}\label{PIP-rl}
Let $G$ be a graph, $B$ be a PSD forcing set of $G$, $\mathcal F$ be a relaxed chronology of PSD forces of $B$ on $G$, $x \in V(G)$, $\mathcal Q$ be the path bundle of $\mathcal F$ induced by $x$, and $H=G\left[\bigcup_{P \in \mathcal Q}V(P)\right]$.    Then $\mathcal Q$ is a rigid linkage of $G$.  
\end{theorem}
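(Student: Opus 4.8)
The plan is to invoke Theorem \ref{rl}: since a linkage is rigid exactly when it is an RL-chain set of some RL-forcing process, it suffices to exhibit a legal CCR-RL process on $G$ whose RL-chain set is precisely $\mathcal{Q}$. The natural candidate performs exactly the forces of $\mathcal{F}|_{\mathcal{Q}}$, one at a time, in the time order inherited from $\mathcal{F}$ (breaking ties within a single time-step arbitrarily; cf.\ Observation \ref{empty-rl}). Writing $W^{[k]}=\bigcup_{i} V(Q_i^{[k]})$ for the vertices accumulated by the vertex-induced construction through time $k$, I would run the process so that after the forces of time-step $k+1$ the blue set is exactly $W^{[k+1]}$, with every force aimed into (a subcomponent of) $C^k_x$. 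Because $\mathcal{Q}$ is a path cover of $H$ in which each path begins at a vertex of $B=B^{[0]}$ and ends at a vertex that performs no force, the RL-chain set of this process will be $\mathcal{Q}$ itself, \emph{provided} the process is legal; so the whole content is checking CCR-RL at each step.

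The first key step is the identification $\comp(G-W^{[k]},x)=C^k_x$. One inclusion is immediate, since $C^k_x$ is connected, contains $x$, and is disjoint from $W^{[k]}\subseteq E_{\mathcal{F}}^{[k]}$. For the reverse I would show every vertex on the boundary of $C^k_x$ lies in $W^{[k]}$: such a vertex $b\in E_{\mathcal{F}}^{[k]}$ is adjacent to some white $c\in C^k_x$, and if $b$ is forced at a time $t\le k$, then $b$ and $c$ are both white and adjacent at time $t-1$, forcing $b\in C^{t-1}_x$; the vertex-induced construction (Lemma \ref{inducedbundle}, via Lemma \ref{path_builder}) then records $b$ on some path at step $t$, whence $b\in W^{[t]}\subseteq W^{[k]}$. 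Thus $C^k_x$ is cut off from the rest of $G$ by $W^{[k]}$, i.e.\ it is a full component of $G-W^{[k]}$.

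With this identification, legality of CCR-RL follows. A force $v_i^k\to w_i$ of time-step $k+1$ is a valid PSD force into $C^k_x$ in $\mathcal{F}$, so $w_i$ is the unique neighbor of $v_i^k$ in $C^k_x$; since the component actually forced into is a subcomponent of $C^k_x$ (namely $C^k_x$ with the finitely many vertices already forced in this batch removed), $w_i$ remains the unique white neighbor of $v_i^k$ there, giving the color-change condition. Each $v_i^k$ performs only this one force in $\mathcal{F}|_{\mathcal{Q}}$ and is therefore active. For the rigidity (boundary) condition I must show no inactive blue vertex is adjacent to the chosen component $C\subseteq C^k_x$. The inactive vertices are the non-terminal path vertices: an interior vertex $v_i^{j}$ with $j<k$ cannot be adjacent to $C^k_x$, for otherwise at time $j$ it would have two white neighbors in $C^j_x$, namely its forced successor $v_i^{j+1}$ and a vertex of $C^k_x\subseteq C^j_x$, contradicting its PSD force $v_i^j\to v_i^{j+1}$; and a vertex $v_j^k$ that has just forced $w_j$ earlier in the same batch has its unique $C^k_x$-neighbor $w_j$ now blue, so it is not adjacent to any remaining white subcomponent. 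Hence $\partial_G(C)$ contains no inactive blue vertex.

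Finally, after the last time-step the blue set is $W^{[\rd(x)]}=V(H)$, the chains traced from $B$ are exactly the paths of $\mathcal{Q}$, and their terminal vertices are active; so $\mathcal{Q}$ is the RL-chain set of a legal RL-forcing process on $G$, and Theorem \ref{rl} yields that $\mathcal{Q}$ is a rigid linkage of $G$. I expect the main obstacle to be the boundary (rigidity) condition: unlike standard or PSD forcing, CCR-RL forbids forcing into a component whose boundary carries a previously-forced vertex, so the crux is the adjacency analysis above showing, through Lemma \ref{path_builder}, that inactive path vertices are genuinely cut off from the shrinking component $C^k_x$.
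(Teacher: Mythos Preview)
Your proposal is correct and follows essentially the same approach as the paper: both exhibit $\mathcal{F}|_{\mathcal{Q}}$ as a valid RL-forcing process on $G$ and then invoke Theorem \ref{rl}, with the crux being the verification that no inactive (already-forced) path vertex lies on $\partial_G(C^k_x)$ because such a vertex had a unique white neighbor in $C^{t-1}_x\supseteq C^k_x$ at the time it forced. The only stylistic difference is that the paper uses Observation \ref{empty-rl} to first reorder $\mathcal{F}$ into a chronological list with the $\mathcal{F}|_{\mathcal{Q}}$-forces coming first, which collapses your identification $\comp(G-W^{[k]},x)=C^k_x$ and your within-time-step case into the single chronological induction; your direct argument accomplishes the same thing without the reordering.
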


\begin{proof}
    Let $K=\abs{V(H)}-\abs{B}$.  By Observation \ref{empty-rl}, we can suppose without loss of generality that $\mathcal F$ is a chronological list of forces where the $K$ forces in $\mathcal F|_{\mathcal Q}$ are performed first.  Since $\mathcal Q$ is a linkage, we proceed by first showing that $\{F^{(k)}|_{\mathcal Q}\}_{k=1}^K$ is a valid RL-forcing process and then applying Theorem \ref{rl} to complete the proof. Prior to application of the first force, all vertices in $B$ are active, so the first force in $\mathcal{F}$ is a valid RL-force.  

    We proceed by induction. Fix $k\in \{0,1,\dots,K-1\}$, and suppose that $B^{[k]}$ is blue and every force in $\{F^{(i)}|_{\mathcal Q}\}_{i=1}^k$ is a valid RL-force.  Additionally, let $u_{k+1} \rightarrow v_{k+1} \in F^{(k+1)}|_{\mathcal Q}$. We assert that this force is a valid RL-force.  
Suppose $u_j \rightarrow v_j \in F^{(j)}|_{\mathcal Q}$ for some $j \leq k$.  Then $v_j$ was the only neighbor of $u_j$ in $C^{j-1}_x$.
Since $V(C^k_x) \subseteq V(C^{j-1}_x) \setminus B^{[k]}$ and $v_j \in B^{[k]}$, it follows that $u_j$ has no neighbors in $C^k_x$.  Thus $\partial_G(C^k_x)$ contains no inactive blue vertices. Since $v_{k+1}$ is the only white neighbor of $u_{k+1}$ in $C^k_x$, $u_{k+1} \rightarrow v_{k+1}$ is a valid RL-force. Finally, $\{F^{(k)}|_{\mathcal Q}\}_{k=1}^K$ is a valid RL-forcing process with RL-chain set $\mathcal Q$, so applying Theorem \ref{rl} completes the proof.
\end{proof}

\section*{Acknowledgements}
This research began at the American Mathematical Society Mathematics Research Community ``Finding Needles in Haystacks: Approaches to Inverse Problems using Combinatorics and Linear Algebra'' with support from the National Science Foundation, and the authors thank AMS and NSF.  
The research of all the authors was partially supported by NSF grant 1916439.  The research of Yaqi Zhang was also partially supported by Simons Foundation grant 355645 and NSF grant DMS 2000037.

\bibliographystyle{plain}

\end{document}